\newtheorem{thm}{Theorem}
\newtheorem{rem}{Remark}
\newtheorem{assump}{Assumption}
\newtheorem{lem}[thm]{Lemma}
\newtheorem{defn}[thm]{Definition}
\newtheorem{cor}[thm]{Corollary}
\newcommand{\R}{ \ensuremath{\mathbb{R}} }
\newcommand{\N}{ \ensuremath{\mathbb{N}} }
\newcommand{\Landau}{\ensuremath{\mathcal O}}
\newcommand{\jump}[1]{\ensuremath{{|\!\![} #1 {]\!\!|}}}
\newcommand{\bigjump}[1]{\ensuremath{\Big[\!\!\Big[ #1 \Big]\!\!\Big]}} 
\newcommand{\nach}[1]{\ensuremath{\,\mathrm{d} #1} }
\DeclareMathOperator{\spn}{span}
\DeclareMathOperator{\meas}{meas}
\DeclareMathOperator*{\argmin}{arg\,min}
\newcommand{\eps}{\varepsilon}
\newcommand{\E}{\mathrm{e}}
\newcommand{\D}{\mathrm{d}}
\title{Macro-element interpolation on tensor product meshes}
\author{Martin Schopf\thanks{Institut f\"ur Numerische Mathematik, Technische Universität Dresden, 01062 Dresden, Germany.\newline \textsf{martin.schopf@tu-dresden.de}}}
\date{\today}
\begin{document}

\maketitle

\begin{abstract} A general theory for obtaining anisotropic interpolation error estimates for macro-element interpolation is developed revealing general construction principles. We apply this theory to interpolation operators on a macro type of biquadratic $C^1$ finite elements on rectangle grids which can be viewed as a rectangular version of the $C^1$ Powell-Sabin element. This theory also shows how interpolation on the Bogner-Fox-Schmidt finite element space (or higher order generalizations) can be analyzed in a unified framework. Moreover we discuss a modification of Scott-Zhang type giving optimal error estimates under the regularity required without imposing quasi uniformity on the family of macro-element meshes used. We introduce and analyze an anisotropic macro-element interpolation operator, which is the tensor product of one-dimensional $C^1-P_2$ macro interpolation and $P_2$ Lagrange interpolation. These results are used to approximate the solution of a singularly perturbed reaction-diffusion problem on a Shishkin mesh that features highly anisotropic elements. Hereby we obtain an approximation whose normal derivative is continuous along certain edges of the mesh, enabling a more sophisticated analysis of a continuous interior penalty method in another paper.
\end{abstract}
{\it AMS subject classification (2010):}  65M60, 65N30\\
{\it Key words:} Anisotropic interpolation error estimates, differentiable finite elements, FEM, macro, Hermite interpolation, quasiinterpolation, Shishkin mesh

\section{Introduction}

There is a high interest in differentiable finite elements and their corresponding interpolation operators as these are used for instance in the construction and analysis of methods for higher order problems like the biharmonic equation. On a triangular mesh the fifth degree Argyris element and its reduced version --- the Bell element --- are most popular. However, they are rarely used as they introduce a large number of degrees of freedom. In fact, \v{Z}eni\v{z}ek \cite{Z70} showed that on a triangular element with polynomial shape functions at least 18 degrees of freedom are needed to grant the $C^1$ property. In this respect the Bell element can be considered optimal.

The desire for reducing the number of degrees of freedom used (and therefore the polynomial degree) lead to the construction of macro-elements in 1960s and 1970s. Let us mention the cubic Hsieh-Clough-Tocher macro-element \cite{Cia78} and the quadratic Powell-Sabin macro-element \cite{PS77}. In the latter, each base triangle is split into six sub-triangles that share an inner point (for instance the center of the inscribed circle) of the base triangle. The inner degrees of freedom are then eliminated by the $C^1$ property.

While there is a huge amount of literature for triangular macro-elements (see for instance the survey article \cite{N98} and the references therein), there appears to be only one publication \cite{HHZ2011} dealing with rectangular ones. Moreover, to the knowledge of the author, there appears to be no paper dealing with anisotropic interpolation error estimates for macro-element interpolation, i.e.~up to now macro-element interpolation has only been considered on quasi-uniform meshes. However, one can certainly improve the approximation quality by allowing elements with an arbitrarily high aspect ratio in certain cases. This benefit becomes obvious if the underlaying domain or the function to be approximated has anisotropic features (like layers).

In Section \ref{sec:univariate} of this paper we shall briefly introduce the concept of $C^1-P_2$ macro-interpolation in the 1D case and fix some notation.

The following Section \ref{sec:main_macro} starts by showing how the 1D $C^1-P_2$ macro-element extends to the 2D $C^1-Q_2$ macro-element on tensor product meshes. Then a general theory for obtaining anisotropic interpolation error estimates for macro-element interpolation is developed and general construction principles are revealed. This theory is then applied in order to analyze the $C^1-Q_2$ macro-element interpolation operator $\Pi$ as well as some reduced counterpart.

Thereafter we discuss a modification of $\Pi$ of Scott-Zhang \cite{SZ90} type in Subsection \ref{subsec:SZ_macro} giving optimal error estimates under the regularity required. The price to pay is that not all linear functionals that define this modified operator are local, i.e. in order to obtain the value of the quasi-interpolant on a base macro-element $M$ some averaging process of the data on a macro-element edge that does not necessarily belong to $M$ is needed. This causes some difficulties because quasi-interpolation operators of similar type are mostly studied on quasi-uniform meshes.

We summarize our results concerning $C^1$ (quasi-)interpolation in Subsection \ref{subsec:summary_macro} and cite some results of the literature.

In Section \ref{sec:anisomacro_macro} we introduce and analyze an anisotropic macro-element interpolation operator. Basically, this operator is the tensor product of one-dimensional $C^1-P_2$ macro-interpolation and $P_2$ Lagrange interpolation.

We conclude this paper with Section \ref{sec:Shishkin_macro} in which we apply the results of the (Sub-)Sections \ref{subsec:SZ_macro} and \ref{sec:anisomacro_macro} in order to approximate the solution of a singularly perturbed reaction-diffusion problem on a Shishkin mesh that features anisotropic elements, i.e.~elements with an unbounded aspect ratio for $\eps \rightarrow 0$. Hereby we obtain an approximation whose normal derivative is continuous along certain edges of the mesh, enabling a more sophisticated analysis of a continuous interior penalty method in the next chapter.

\section{\texorpdfstring{Univariate $C^1-P_2$ macro-element interpolation}{Univariate C1-P2 macro-element interpolation}}\label{sec:univariate}

Consider the 1D Hermite interpolation problem on the interval $[-1,1]$: Let $u$ be a real function over $[-1,1]$ such that $u(\pm 1),\, u'(\pm 1) \in \R$ can be defined. Find $s \in C^1[-1,1]$, such that
\begin{gather}\label{eq:hermiteProb}
	s(\pm 1) = u(\pm 1),\qquad s'(\pm 1) = u'(\pm 1).
\end{gather}

In 1983 Schumaker \cite{S83} observed that while the Hermite interpolation problem considered is only solvable for a quadratic polynomial $s \in P_2[-1,1]$ if and only if
\begin{gather*}
	u'(-1)+u'(1) = u(1)-u(-1),
\end{gather*}
there is always a solution in the space of quadratic splines with one simple knot. We may choose $x=0$ as this knot and introduce the spline space 
\begin{gather*}
	S^2 \coloneqq \big \{ v \in C^1[-1,1]\,:\, \left. v \right|_{T} \in P_2(T),\; T \in \{[-1,0],[0,-1]\} \big\}.
\end{gather*}
Of course other choices for the additional knot are possible. This parameter can be used to grant additional properties of the underlaying interpolation operator, see \cite{S83}.

A function $s$ that is a quadratic polynomial on each of the intervals $[-1,0]$ and $[0,1]$ can be characterized by six parameters of which two are determined by the $C^1$ property at zero. Hence, the remaining four parameters of a function $s \in S^2$ may be chosen in such a way that \eqref{eq:hermiteProb} is fulfilled. In fact, a simple calculation shows that
\begin{gather}\label{eq:lagrangeSpline}
	s(x) = \sum_{i = \pm 1} \left( u(i) \hat\varphi_{i}(x) + u'(i) \hat\psi_{i}(x) \right),\quad x\in [-1,1]
\end{gather}
is the unique solution of \eqref{eq:hermiteProb} in $S^2$. Here $\hat\varphi_{\pm 1}$ and $\hat\psi_{\pm 1} \in S^2$ denote the Lagrangian basis functions
\begin{gather}
\label{eq:basis}
\begin{alignedat}{2}
	\hat\varphi_{-1} (x) &= \frac{(x-1)^2}{2} - \left\{ \begin{alignedat}{2} &x^2,&\quad x &\in [-1,0], \\ &0,&\quad x &\in [0,1], \end{alignedat} \right.\qquad & 
	\hat\varphi_{1} (x) &= \frac{(x+1)^2}{2} - \left\{ \begin{alignedat}{2} &0,&\quad x &\in [-1,0], \\ &x^2,&\quad x &\in [0,1], \end{alignedat} \right.\\
	\hat\psi_{-1} (x) &= \frac{(x-1)^2}{4} - \left\{ \begin{alignedat}{2} &x^2,&\quad x &\in [-1,0], \\ &0,&\quad x &\in [0,1], \end{alignedat} \right.\qquad &
	\hat\psi_{1} (x) &= -\frac{(x+1)^2}{4} + \left\{ \begin{alignedat}{2} &0,&\quad x &\in [-1,0], \\ &x^2,&\quad x &\in [0,1], \end{alignedat} \right.
\end{alignedat}
\end{gather}
i.e.~these spline functions fulfill the conditions
\begin{align*}
	\hat\varphi_{-1}(-1) &= \hat\varphi_{1}(1) = \hat\psi_{-1}'(-1) = \hat\psi_{1}'(1) = 1,\\
	\hat\varphi_{1}(-1) = \hat\psi_{-1}(-1) &= \hat\psi_{1}(-1) = \hat\varphi_{-1}(1) = \hat\psi_{-1}(1) = \hat\psi_{1}(1) = 0,\\
	\hat\varphi_{-1}'(-1) = \hat\varphi_{1}'(-1) &= \hat\psi_{1}'(-1) = \hat\varphi_{-1}'(1) = \hat\varphi_{1}'(1) = \hat\psi_{-1}'(1) = 0.
\end{align*}
For a graphical representation of these functions, see Figure \ref{fig:splinebase}.

\begin{figure}
	\centering
	\includegraphics[width=.6\textwidth]{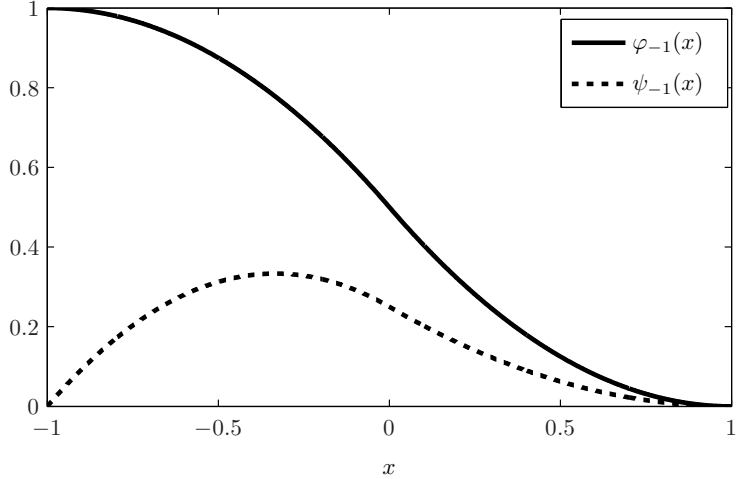}
	\caption{Lagrangian basis functions $\varphi_{-1}$ and $\psi_{-1}$}
	\label{fig:splinebase}
\end{figure}

Based on the symmetry of the subproblem defining the basis functions we observe 
\begin{gather*}
	\hat\varphi_{-1}(x) = \hat\varphi_1(-x)\qquad \text{and} \qquad	\hat\psi_{-1}(x) = -\hat\psi_1(-x)\quad \forall x \in [-1,1].
\end{gather*}
Moreover, $\hat\varphi_{\pm 1}'$ are even functions, i.e.
\begin{gather*}
	\hat\varphi_{\pm 1}'(x) = \hat\varphi_{\pm 1}'(-x) \quad \forall x \in [-1,1].
\end{gather*}

From these properties we can deduce that $\hat\varphi_1'(x) = -\hat\varphi_{-1}'(-x) = \hat\varphi_{-1}'(x)$ for all $x \in [-1,1]$. Hence, similar to a cubic polynomial the derivative $s'$ of a spline $s \in S^2$ is an element of a three dimensional vector space. Since the second derivative of the spline considered is piecewise constant, it belongs to a two dimensional space.

This fact can nicely be seen if we switch from the Lagrangian representation \eqref{eq:lagrangeSpline} of the solution of \eqref{eq:hermiteProb} to its Newtonian one. Based on $\hat\psi_1(\pm 1) = \hat\psi_1'(-1) = 0$ we observe, that
\begin{gather}\label{eq:newtonSpline}
	s(x) = u[-1] + u[-1,-1](x+1) + u[-1,-1,1](x+1)^2 + u[-1,-1,1,1] 4 \hat\psi_1(x).
\end{gather}
Here $u[x_0,\dots,x_N]$ are the well known divided differences of order $N$ of $u$ with possibly coincident knots $x_0 \le x_1 \le \dots \le x_N$, recursively defined by
\begin{gather*}
	u[x_i] \coloneqq u(x_i)\quad \text{and}\quad u[x_0,\dots,x_N] \coloneqq \left\{ \begin{alignedat}{2}& \frac{1}{N!} u^{(N)}(x_0),\quad & &\text{if } x_0 = \dots = x_N, \\
	& \frac{u[x_1,\dots,x_N]-u[x_0,\dots,x_{N-1}]}{x_N - x_0},\quad & &\text{else.} \end{alignedat} \right.
\end{gather*}

A simple calculation shows that
\begin{gather} \label{eq:dividiff}
	\begin{gathered}
	u[-1] = u(-1), \quad u[-1,-1] = u'(-1),\quad u[-1,-1,1] = \frac{1}{4}\big( u(1) - u(-1) \big) - \frac{1}{2} u'(-1),\\
	u[-1,-1,1,1] = \frac{1}{4} \big( u(-1) - u(1) + u'(-1) + u'(1) \big).
	\end{gathered}
\end{gather}
If we substitute the expressions from \eqref{eq:dividiff} into \eqref{eq:newtonSpline} and expand in terms of $u(\pm 1)$ and $u'(\pm 1)$ we re-obtain the Lagrangian representation \eqref{eq:lagrangeSpline} of $s$. However, the Newtonian form  \eqref{eq:newtonSpline} of $s$ will prove to be very useful in the derivation of anisotropic interpolation error estimates.

\section[\texorpdfstring{$C^1-Q_2$ macro-element interpolation}{C1-Q2 macro-element interpolation}]{\texorpdfstring{$C^1-Q_2$ macro-element interpolation on tensor product meshes}{C1-Q2 macro-element interpolation on tensor product meshes}}\label{sec:main_macro}

One can easily solve the Hermite interpolation problem \eqref{eq:hermiteProb} for a cubic polynomial $s$. Hence, similar to \eqref{eq:basis} a Lagrangian basis for a cubic $C^1$ spline can be obtained associated with the values of the function and its first derivative in the endpoints of the interval considered. It is well-known that the tensor product of this basis of the cubic $C^1$ splines leads to the Bogner-Fox-Schmidt element, which is in fact a $C^1$ element. Here the 16 degrees of freedom are associated with the values $v(V_i)$, the first derivatives $v_x(V_i)$, $v_y(V_i)$ and the mixed derivative $v_{xy}(V_i)$ of a function $v \in Q_3(T)$ at the four vertices $V_i$, $i=1,\dots,4$ of a rectangle $T$, see Figure \ref{fig:bogner-macro}. Note that the restriction of the generated finite element space to any element $T$ is $Q_3(T)$, where $T$ is a rectangle of the underlaying triangulation with sides aligned to the coordinate axes.

\begin{figure}
	\centering
	\includegraphics[width=.9\textwidth]{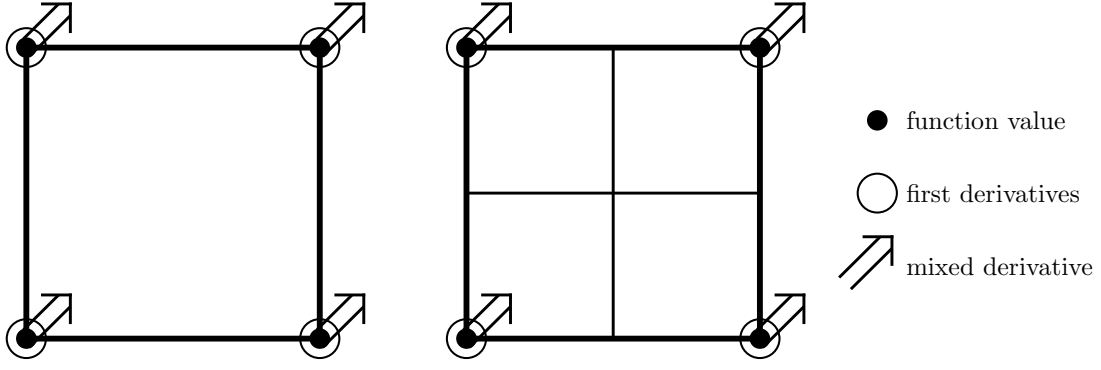}
	\caption{The Bogner-Fox-Schmidt $Q_3$ element (left) and its $Q_2$ analogue on a macro of four elements (right)}
	\label{fig:bogner-macro}
\end{figure}

By analogy with the Bogner-Fox-Schmidt element the tensor product of the basis functions \eqref{eq:basis} generates a $C^1$ macro-element, as well. One obtains 16 basis functions that are piecewise biquadratic:
\begin{gather}\label{eq:basis2d}
	\begin{alignedat}{2}
	\hat\varphi_{i,j}(x,y) &\coloneqq \hat\varphi_i(x) \hat\varphi_j(y),&\quad \hat\phi_{i,j}(x,y) &\coloneqq \hat\psi_i(x) \hat\varphi_j(y),\\
	\hat\chi_{i,j}(x,y) &\coloneqq \hat\varphi_i(x) \hat\psi_j(y),&\quad \hat\psi_{i,j}(x,y) &\coloneqq \hat\psi_i(x) \hat\psi_j(y),
	\end{alignedat}\quad i,j\in\{-1,1\}.
\end{gather}
Whenever definitions are tied to a reference (macro-)element we shall continue to use a hat symbol to emphasize this fact. With the dual functionals
\begin{gather*}
	\begin{alignedat}{2}
		F^{\hat\varphi}_{i,j}(v) &\coloneqq v(i,j),& \quad F^{\hat\phi}_{i,j}(v) &\coloneqq v_x(i,j),\\
		F^{\hat\chi}_{i,j}(v) &\coloneqq v_y(i,j),& \quad F^{\hat\psi}_{i,j}(v) &\coloneqq v_{xy}(i,j),
	\end{alignedat}\quad i,j\in\{-1,1\}.
\end{gather*}
the basis functions obey the Lagrange relation
\begin{gather*}
	F^v_{i,j}(w_{k,\ell}) = \delta_{vw} \delta_{ik} \delta_{j\ell}
\end{gather*}
for $v,w \in \{\hat\varphi, \hat\phi, \hat\chi, \hat\psi \}$ and $i,j,k,\ell \in \{-1,1\}$. We denote by $\hat M$  the reference macro-element which is given as the triangulation of the reference domain $\Lambda \coloneqq [-1,1]^2$ induced by the coordinate axes. On $\hat M$ the four basis functions for $i=j=-1$ associated with the point $(-1,-1)$ are depicted in Figure \ref{fig:basis}.

\begin{figure}
	\centering
	\includegraphics[width=.8\textwidth]{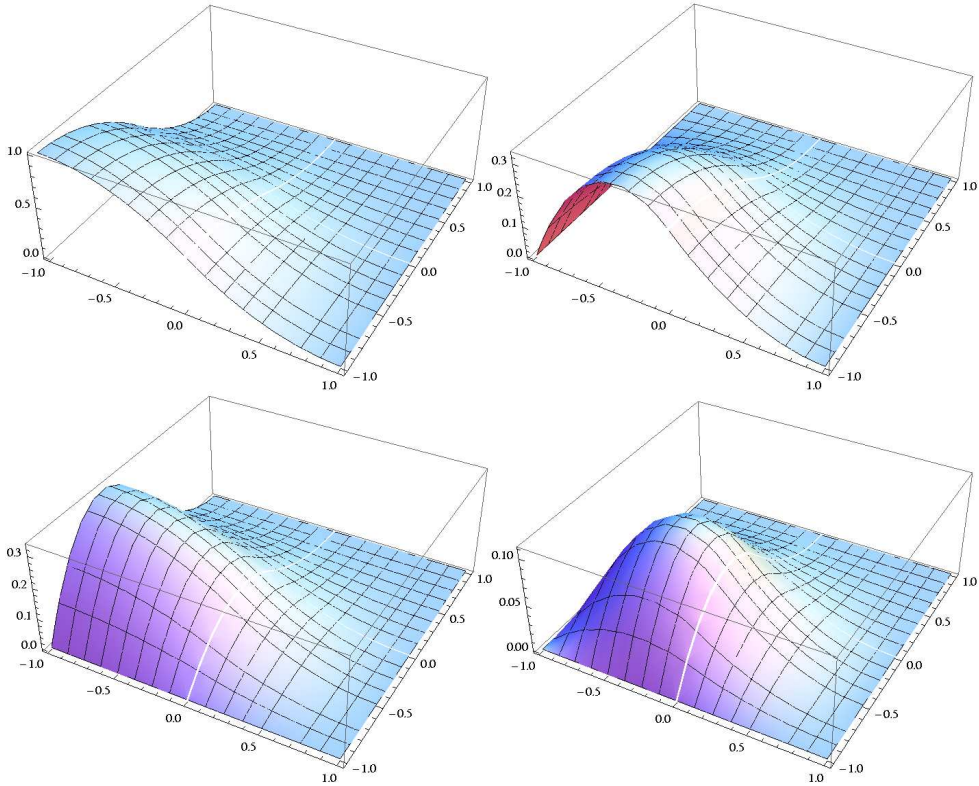}
	\caption{The basis functions $\hat\varphi_{-1,-1}$, $\hat\phi_{-1,-1}$, $\hat\chi_{-1,-1}$, $\hat\psi_{-1,-1}$ on the reference macro-element.}
	\label{fig:basis}
\end{figure}

In a natural way, a biquadratic interpolant $\hat \Pi v \in C^1(\Lambda)$ of a function $v \in C^2(\Lambda)$ is defined by
\begin{gather}\label{eq:interpolant}
	\hat\Pi v = \sum_{i,j\in\{-1,1\}} F^{\hat\varphi}_{i,j}(v) \hat\varphi_{i,j} + F^{\hat\phi}_{i,j}(v) \hat\phi_{i,j} + F^{\hat\chi}_{i,j}(v) \hat\chi_{i,j} + F^{\hat\psi}_{i,j}(v) \hat\psi_{i,j}.
\end{gather}
By affine equivalence, it suffices to define the interpolation operator $\hat\Pi$ on the reference macro-element $\hat M$. Given a rectangular macro-element mesh $\mathcal{M}$ of tensor product type, the value of the interpolant $\Pi v$ of a function $v \in C^2(\overline \Omega)$ in a certain point $(x,y) \in \overline \Omega$ of the physical domain can be obtained by identifying a macro-element $M$ such that $(x,y) \in M$ and performing an affine transformation.

After making an independent construction an excessive search of the literature available showed that the $C^1-Q_2$ macro-element is not new. In fact, it can be traced back to the PhD thesis \cite{A89}. In the work \cite{H97} the thesis \cite{A89} is cited and optimal interpolation error estimates 
\begin{gather*}
	|u - \Pi u|_m \le C h^{3-m} |u|_3
\end{gather*}
for $m=0,1,2$ are proven for $u\in H^3(\Omega) \cap C^2(\overline \Omega)$ and a tensor product triangulation which is required to be quasi-uniform.

Strangely, this idea appears to be unpublished until 2011. In \cite{HHZ2011} the $C^1$ property of the finite element space $V_h$ introduced by the $C^1-Q_2$ macro-element on a tensor product triangulation $\mathcal{T}_h$ of a domain $\Omega$ is shown. Moreover, it is established that $V_h$ coincides with the full $C^1-Q_2$ space, i.e.:
\begin{gather}\label{eq:full}
	V_h = \{v_h \in C^1(\Omega)\,:\, \left. v_h \right|_T \in Q_2(T)\; \forall T \in \mathcal{T}_h\}.
\end{gather}
This appears to be of high interest in certain applications. Finally, optimal interpolation error estimates are derived for an extension of the Girault-Scott operator into the $C^1-Q_2$ finite element space, i.e.~a modification $\tilde \Pi $ of the operator $\Pi$ (defined via an affine transformation as $\hat \Pi$ on the reference macro-element $\hat M$ in \eqref{eq:interpolant}) is obtained in such a way that a function $v \in H^2(\Omega)$ can be interpolated and
\begin{gather*}
	\|v-\tilde \Pi v\|_0 + h |v-\tilde \Pi v |_1 + h^2 |v-\tilde \Pi v |_2 \le C h^2 |v|_2. 
\end{gather*}

However, the analysis in \cite{HHZ2011} of the interpolation error also requires quasi-uniformity of the triangulation $\mathcal{T}_h$, i.e.~it is assumed that there is a positive constant $C>0$ such that for all axis-aligned mesh rectangles $T \in \mathcal{T}_h$ the edge lengths $h_x(T)$ and $h_y(T)$ in $x$- and $y$-direction are equivalent to a global discretization parameter $h$, i.e.
\begin{gather}
	C h \le h_x(T),h_y(T) \le h\quad \forall T \in \mathcal{T}_h.
\end{gather}
On the other hand there are problems that can be treated efficiently if elements with very high aspect ratios are permitted within the triangulation or if edge lengths of neighboring elements are allowed to vary unbounded. As examples, let us mention the approximation of a smooth function over a long and thin domain $\Omega$ or solutions of partial differential equations with anisotropic behavior like layers. Wherefore we ask the question:
{\em Is it possible to prove anisotropic interpolation error estimates for the operator $\Pi$ from \eqref{eq:interpolant} or a modification of it?}

It turns out that the wonderful theory of \cite{AD92,A99} is incapable to handle the analysis of macro-element interpolation. In the following we shall therefore develop a slight modification of it.

\section{A theory on anisotropic macro-element interpolation}\label{sec:macrotheory}

We first introduce some notation, partly adopted from \cite{AD92}. 

Let $\hat M \coloneqq \big\{\hat T_i\big\}_{i=1}^\ell$ be our reference macro-element, i.e.~a triangulation of some reference domain $\Lambda$. For a set of multi-indices $\boldsymbol{P}$ we denote by 
\begin{gather}\label{eq:polyspace}
\boldsymbol{P}(\Lambda) \coloneqq \spn\{\boldsymbol{X} \mapsto \boldsymbol{X}^{\boldsymbol{\alpha}}\,:\, \boldsymbol{\alpha} \in \boldsymbol{P}\} \subset C^\infty(\Lambda)
\end{gather}
the corresponding polynomial function space over $\Lambda$ that is spanned by the monomials $\boldsymbol{X}^{\boldsymbol{\alpha}}$ ($\boldsymbol{\alpha} \in \boldsymbol{P}$).

Here we used standard multi-index notation:
\begin{gather*}
	\boldsymbol{\alpha} = (\alpha_1,\alpha_2),\quad |\boldsymbol{\alpha}| = \alpha_1+\alpha_2,\quad \boldsymbol{X}^{\boldsymbol{\alpha}} = x^{\alpha_1}y^{\alpha_2},\quad \boldsymbol{h}^{\boldsymbol{\alpha}} = h_x^{\alpha_1} h_y^{\alpha_2},\quad
	\boldsymbol{D}^{\boldsymbol{\alpha}} = \frac{\partial^{\alpha_1}}{\partial x^{\alpha_1}} \frac{\partial^{\alpha_2}}{\partial y^{\alpha_2}}.
\end{gather*}

The hull $\boldsymbol{\overline P}$ of $\boldsymbol{P}$ is the set
\begin{gather*}
	\boldsymbol{\overline P} \coloneqq \boldsymbol{P} \cup \{\boldsymbol{\alpha} + \boldsymbol{e}_i\;:\; \boldsymbol{\alpha} \in \boldsymbol{P},\; i=1,2\},
\end{gather*}
where $\{\boldsymbol{e}_1,\boldsymbol{e}_2\}$ denotes the canonical basis of $\R^2$.

Associated with a set of multi-indices $\boldsymbol{P}$ with $\boldsymbol{0} \coloneqq (0,0) \in \boldsymbol{P}$ and $1 \le p \le \infty$ we introduce a norm and a semi-norm on the reference domain $\Lambda$:
\begin{gather*}
	\|v\|_{\boldsymbol{P},p}^p \coloneqq \sum_{\boldsymbol{\alpha} \in \boldsymbol{P}} \|\boldsymbol{D}^{\boldsymbol{\alpha}} v\|_{L_p(\Lambda)}^p,\qquad |v|_{\boldsymbol{\overline P},p}^p \coloneqq \sum_{\boldsymbol{\alpha} \in \boldsymbol{\overline P} \setminus \boldsymbol{P}} \|\boldsymbol{D}^{\boldsymbol{\alpha}} v\|_{L_p(\Lambda)}^2,
\end{gather*}
with obvious modifications for $p=\infty$.
Furthermore, let $H^{\boldsymbol{P}}_p(\Lambda)$ denote the function space
\begin{gather}\label{eq:HPp}
	H^{\boldsymbol{P}}_p(\Lambda) \coloneqq \{ v \in L^1(\Lambda)\; :\; \|v\|_{\boldsymbol{P},p} < \infty \}
\end{gather}
and let $S(\hat M)$ be a spline space such that for $v \in S(\hat M)$ the restrictions $\left.v\right|_{\hat T_i}$ are polynomials, $i = 1,\dots,\ell$.

The following two Lemmas are taken from \cite{AD92}.

\begin{lem}\label{lem:lemma1}
	Let $\boldsymbol{P}$ be a set of multi-indices. To each $v\in H^{\boldsymbol{P}}_p(\Lambda)$ there exists a unique $q \in \boldsymbol{P}(\Lambda)$ with
	\begin{gather*}
		\int_{\Lambda} \boldsymbol{D}^{\boldsymbol{\alpha}} (v-q) \,\mathrm{d} \boldsymbol{X} = 0\qquad \forall \boldsymbol{\alpha} \in \boldsymbol{P}.
	\end{gather*}
\end{lem}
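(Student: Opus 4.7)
The plan is to reduce the statement to an injectivity claim for a square linear system on the finite-dimensional space $\boldsymbol{P}(\Lambda)$. Writing any candidate $q \in \boldsymbol{P}(\Lambda)$ as $q = \sum_{\boldsymbol{\beta} \in \boldsymbol{P}} c_{\boldsymbol{\beta}} \boldsymbol{X}^{\boldsymbol{\beta}}$, the $|\boldsymbol{P}|$ orthogonality conditions
$$\int_\Lambda \boldsymbol{D}^{\boldsymbol{\alpha}} q \,\D\boldsymbol{X} = \int_\Lambda \boldsymbol{D}^{\boldsymbol{\alpha}} v \,\D\boldsymbol{X},\qquad \boldsymbol{\alpha} \in \boldsymbol{P},$$
form a linear system in the $|\boldsymbol{P}|$ unknowns $c_{\boldsymbol{\beta}}$. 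The right-hand sides are well defined since $v\in H^{\boldsymbol{P}}_p(\Lambda)\subset L^1(\Lambda)$ (using $\boldsymbol{0}\in\boldsymbol{P}$ to get $L^1$-bounds on $\boldsymbol{D}^{\boldsymbol{\alpha}}v$ via Hölder on the bounded domain $\Lambda$). So existence and uniqueness simultaneously follow once I prove that the homogeneous system has only the trivial solution.

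The core step is therefore: if $q = \sum_{\boldsymbol{\beta}\in\boldsymbol{P}} c_{\boldsymbol{\beta}} \boldsymbol{X}^{\boldsymbol{\beta}}$ satisfies $\int_\Lambda \boldsymbol{D}^{\boldsymbol{\alpha}}q\,\D\boldsymbol{X}=0$ for all $\boldsymbol{\alpha}\in\boldsymbol{P}$, then $q\equiv 0$. I would argue by a backward induction on the total degree $|\boldsymbol{\alpha}|$. The key observation is the componentwise monotonicity: $\boldsymbol{D}^{\boldsymbol{\alpha}}\boldsymbol{X}^{\boldsymbol{\beta}}\neq 0$ forces $\boldsymbol{\beta}\geq\boldsymbol{\alpha}$ (in each component), and in particular $|\boldsymbol{\beta}|\geq|\boldsymbol{\alpha}|$. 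Let $d_{\max}\coloneqq \max\{|\boldsymbol{\alpha}|:\boldsymbol{\alpha}\in\boldsymbol{P}\}$. For any $\boldsymbol{\alpha}\in\boldsymbol{P}$ with $|\boldsymbol{\alpha}|=d_{\max}$, the only surviving term is $\boldsymbol{\beta}=\boldsymbol{\alpha}$, so $\boldsymbol{D}^{\boldsymbol{\alpha}}q=c_{\boldsymbol{\alpha}}\boldsymbol{\alpha}!$ and the orthogonality forces $c_{\boldsymbol{\alpha}}=0$.

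For the inductive step, assume $c_{\boldsymbol{\beta}}=0$ for every $\boldsymbol{\beta}\in\boldsymbol{P}$ with $|\boldsymbol{\beta}|>k$, and pick $\boldsymbol{\alpha}\in\boldsymbol{P}$ with $|\boldsymbol{\alpha}|=k$. The sum $\int_\Lambda \boldsymbol{D}^{\boldsymbol{\alpha}}q\,\D\boldsymbol{X}$ then reduces to the terms with $\boldsymbol{\beta}\in\boldsymbol{P}$, $\boldsymbol{\beta}\geq\boldsymbol{\alpha}$, and $|\boldsymbol{\beta}|\leq k$; combining $|\boldsymbol{\beta}|\geq|\boldsymbol{\alpha}|=k$ with $|\boldsymbol{\beta}|\leq k$ yields $\boldsymbol{\beta}=\boldsymbol{\alpha}$. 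Hence again only $c_{\boldsymbol{\alpha}}\boldsymbol{\alpha}!\,|\Lambda|$ survives, forcing $c_{\boldsymbol{\alpha}}=0$. Iterating down to degree $0$ gives $q\equiv 0$.

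The main subtlety is not the induction itself but the observation that the set $\boldsymbol{P}$ need not be downward closed or convex, so I cannot simply order monomials lexicographically; using the componentwise inequality $\boldsymbol{\beta}\geq\boldsymbol{\alpha}$ (which is forced by non-vanishing of $\boldsymbol{D}^{\boldsymbol{\alpha}}\boldsymbol{X}^{\boldsymbol{\beta}}$) together with the induced total-degree inequality is what makes the backward induction on $|\boldsymbol{\alpha}|$ work for an arbitrary index set $\boldsymbol{P}$. Once the kernel is trivial, the existence of a unique $q$ for every $v\in H^{\boldsymbol{P}}_p(\Lambda)$ follows from a dimension count on the linear map $\boldsymbol{P}(\Lambda)\to\R^{|\boldsymbol{P}|}$, $q\mapsto \bigl(\int_\Lambda \boldsymbol{D}^{\boldsymbol{\alpha}}q\,\D\boldsymbol{X}\bigr)_{\boldsymbol{\alpha}\in\boldsymbol{P}}$.
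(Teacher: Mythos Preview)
Your proof is correct and matches the approach of the referenced Apel--Dobrowolski argument: the paper does not supply its own proof of this lemma but cites \cite{AD92}, where the same triangularity observation (that $\boldsymbol{D}^{\boldsymbol{\alpha}}\boldsymbol{X}^{\boldsymbol{\beta}}\neq 0$ forces $\boldsymbol{\beta}\ge\boldsymbol{\alpha}$ componentwise, so ordering by total degree makes the system upper triangular with nonzero diagonal) is used. One small remark: your parenthetical appeal to $\boldsymbol{0}\in\boldsymbol{P}$ for well-definedness of the right-hand sides is unnecessary, since by the paper's definition of $H^{\boldsymbol{P}}_p(\Lambda)$ each $\boldsymbol{D}^{\boldsymbol{\alpha}}v$ already lies in $L^p(\Lambda)\subset L^1(\Lambda)$.
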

For a short and elegant proof see \cite[Lemma 1]{AD92}. The argument is a slight extension from the well-known  Bramble-Hilbert theory.

\begin{lem}\label{lem:lemma2}
	Let $\boldsymbol{P}$ be a set of multi-indices with $\boldsymbol{0} \in \boldsymbol{P}$. Then there exists a constant $C$ independent of $v$ such that	
	\begin{gather*}
		\|v\|_{\boldsymbol{\overline P},p} \le C |v|_{\boldsymbol{\overline P},p}
	\end{gather*}
	for all $v \in H^{\boldsymbol{\overline P}}(\Lambda)$ with $\int_{\Lambda} \boldsymbol{D}^{\boldsymbol{\alpha}} v \,\mathrm{d}\boldsymbol{X}=0$ for $\boldsymbol{\alpha} \in \boldsymbol{P}$.
\end{lem}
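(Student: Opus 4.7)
The plan is to exploit the identity $\|v\|_{\boldsymbol{\overline P},p}^p = \|v\|_{\boldsymbol{P},p}^p + |v|_{\boldsymbol{\overline P},p}^p$ in order to reduce the claim to the bound
$$\|\boldsymbol{D}^{\boldsymbol{\alpha}} v\|_{L_p(\Lambda)} \le C\, |v|_{\boldsymbol{\overline P},p}\qquad \forall \boldsymbol{\alpha} \in \boldsymbol{P}.$$
The idea is an iterated application of the Poincaré--Wirtinger inequality on $\Lambda$, made possible by the mean-value hypothesis which turns each $\boldsymbol{D}^{\boldsymbol{\alpha}} v$ (for $\boldsymbol{\alpha} \in \boldsymbol{P}$) into a mean-zero function on $\Lambda$.

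For a fixed $\boldsymbol{\alpha} \in \boldsymbol{P}$ I would first observe that $\boldsymbol{D}^{\boldsymbol{\alpha}+\boldsymbol{e}_i} v \in L_p(\Lambda)$ for $i=1,2$, because $\boldsymbol{\alpha}+\boldsymbol{e}_i \in \boldsymbol{\overline P}$ by the very definition of the hull and $v \in H^{\boldsymbol{\overline P}}_p(\Lambda)$. Consequently $\boldsymbol{D}^{\boldsymbol{\alpha}} v \in W^{1,p}(\Lambda)$, and the standard Poincaré--Wirtinger inequality on the fixed reference domain $\Lambda$ yields
$$\|\boldsymbol{D}^{\boldsymbol{\alpha}} v\|_{L_p(\Lambda)} \le C \sum_{i=1,2} \|\boldsymbol{D}^{\boldsymbol{\alpha}+\boldsymbol{e}_i} v\|_{L_p(\Lambda)}.$$
On the right-hand side each index $\boldsymbol{\alpha}+\boldsymbol{e}_i$ belongs either to $\boldsymbol{\overline P} \setminus \boldsymbol{P}$, in which case the corresponding term is directly absorbed into $|v|_{\boldsymbol{\overline P},p}$, or again to $\boldsymbol{P}$, in which case the same inequality can be applied once more since the hypothesis also supplies a vanishing mean for $\boldsymbol{D}^{\boldsymbol{\alpha}+\boldsymbol{e}_i} v$.

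To organise this recursion cleanly I would use descending induction on the componentwise partial order of $\N^2$ restricted to the finite set $\boldsymbol{P}$. The base case consists of the maximal elements of $\boldsymbol{P}$, for which $\boldsymbol{\alpha}+\boldsymbol{e}_1$ and $\boldsymbol{\alpha}+\boldsymbol{e}_2$ both lie outside $\boldsymbol{P}$, so Poincaré--Wirtinger immediately delivers the desired bound by $|v|_{\boldsymbol{\overline P},p}$. In the inductive step the right-hand side decomposes into terms controlled by $|v|_{\boldsymbol{\overline P},p}$ and terms already bounded by the inductive hypothesis. Finiteness of $\boldsymbol{P}$ guarantees termination after finitely many steps, and summing over $\boldsymbol{\alpha} \in \boldsymbol{P}$ gives $\|v\|_{\boldsymbol{P},p} \le C\,|v|_{\boldsymbol{\overline P},p}$, which combined with the decomposition of $\|v\|_{\boldsymbol{\overline P},p}$ closes the argument.

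The main obstacle I expect is arranging the iteration so that no circular dependency arises; the componentwise partial order resolves this automatically since every step strictly increases one component of $\boldsymbol{\alpha}$. A minor technical point is the dependence of the Poincaré--Wirtinger constant on the reference domain $\Lambda$, which is harmless because $\Lambda$ is fixed and independent of $v$. An alternative strategy is a compactness argument by contradiction using Rellich--Kondrachov embedding together with Lemma \ref{lem:lemma1}, but the explicit iteration is preferable because it renders the dependence of $C$ on $\boldsymbol{P}$ transparent and avoids invoking compact-embedding results not developed elsewhere in the excerpt.
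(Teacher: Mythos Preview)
Your proof is correct and takes a genuinely different route from the paper's. The paper does not actually prove this lemma but refers to \cite[Lemma~2]{AD92}, noting that the argument there is indirect and rests on the compactness of a certain embedding---exactly the Rellich--Kondrachov/contradiction alternative you mention and set aside at the end of your proposal. Your direct iteration of the Poincar\'e--Wirtinger inequality, organised by induction that climbs the (finite) index set $\boldsymbol{P}$, is more constructive: it makes the dependence of $C$ on the structure of $\boldsymbol{P}$ and on the Poincar\'e constant of the fixed reference domain $\Lambda$ transparent, and it avoids invoking compact embeddings altogether. The compactness route, by contrast, is shorter to state and sits naturally in the Bramble--Hilbert tradition invoked throughout the paper. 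One small remark: your argument uses that $\boldsymbol{P}$ is finite (for termination of the induction), which the paper never states explicitly; this is clearly implicit in the setting---otherwise $\boldsymbol{P}(\Lambda)$ would not be a polynomial space in the usual sense---but it would do no harm to say so.
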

An indirect proof can be found in \cite[Lemma 2]{AD92}. It relies on the compactness of a certain embedding, extending a similar result from Bramble and Hilbert.

The next Lemma is an adaptation of \cite[Lemma 3]{AD92} to our patchwise setting.
\begin{lem}\label{lem:lemma3} Let $\boldsymbol{\gamma}$ be a multi-index, $I:C^\mu(\Lambda) \rightarrow S(\hat M) \subset H^{\boldsymbol{P}+\boldsymbol{\gamma}}_p(\Lambda)$, $\mu \in \N$ be a linear operator and let $\boldsymbol{Q}$ be a set of multi-indices with $\boldsymbol{0} \in \boldsymbol{Q}$ and $\boldsymbol{P} \subset \boldsymbol{\overline Q}$. Assume that there are linear functionals $F_i \in \left( H^{\boldsymbol{\overline Q}}_p(\Lambda)\right)'$, $i=1,\dots,j=\dim \boldsymbol{D}^{\boldsymbol{\gamma}} S(\hat M)$, with the properties
	\begin{gather}\label{eq:functionals}
		\left \{ \begin{aligned}
		F_i(\boldsymbol{D}^{\boldsymbol{\gamma}} I u) &= F_i(\boldsymbol{D}^{\boldsymbol{\gamma}} u),\qquad i=1,\dots,j,\qquad \forall u \in C^\mu(\Lambda) \cap H^{\boldsymbol{\overline Q} + \boldsymbol{\gamma}}_p(\Lambda),\\
		\big(F_i(\boldsymbol{D}^{\boldsymbol{\gamma}} s) &= 0\quad \text{for $i=1,\dots,j$} \big) \quad \Rightarrow \quad \boldsymbol{D}^{\boldsymbol{\gamma}} s = 0\qquad \forall s \in S(\hat M).
		\end{aligned}\right.
	\end{gather}
	Then there exists a constant $C$ independent of $u$ such that
	\begin{gather}\label{eq:interpolestim}
		\|u-I u\|_{\boldsymbol{P} + \boldsymbol{\gamma},p} \le C \left( |u|_{\boldsymbol{\overline Q}+\boldsymbol{\gamma},p} + \|q -  I q\|_{\boldsymbol{P}+\boldsymbol{\gamma},p} \right) \qquad \forall u \in C^\mu(\Lambda) \cap H^{\boldsymbol{\overline Q} + \boldsymbol{\gamma}}_p(\Lambda),
	\end{gather}
	where the polynomial $q \in (\boldsymbol{Q}+\boldsymbol{\gamma})(\Lambda)$ is uniquely determined by
	\begin{gather}\label{eq:polyq}
		\int_{\Lambda} \boldsymbol{D}^{\boldsymbol{\alpha}+\boldsymbol{\gamma}} (u-q) \,\mathrm{d} \boldsymbol{X} = 0\qquad \forall \boldsymbol{\alpha} \in \boldsymbol{Q}.
	\end{gather}
\end{lem}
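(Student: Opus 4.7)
The plan is to follow the strategy of the Apel--Dobrowolski Lemma~3 from \cite{AD92}, adapting it to the piecewise polynomial space $S(\hat M)$. The starting point is the linear decomposition
\begin{gather*}
u - I u \;=\; (u - q) \;-\; I(u - q) \;+\; (q - I q),
\end{gather*}
so that the triangle inequality reduces the estimate \eqref{eq:interpolestim} to bounding the first two summands, since $\|q - I q\|_{\boldsymbol{P}+\boldsymbol{\gamma},p}$ is already the last term on the right-hand side of \eqref{eq:interpolestim}.

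For the first summand I would apply Lemma~\ref{lem:lemma2} to the function $v \coloneqq \boldsymbol{D}^{\boldsymbol{\gamma}}(u-q)$. Condition \eqref{eq:polyq} is precisely the requirement $\int_\Lambda \boldsymbol{D}^{\boldsymbol{\alpha}} v \,\mathrm{d}\boldsymbol{X} = 0$ for every $\boldsymbol{\alpha} \in \boldsymbol{Q}$, so Lemma~\ref{lem:lemma2} delivers $\|v\|_{\boldsymbol{\overline Q},p} \le C|v|_{\boldsymbol{\overline Q},p}$. Translating this back to $u-q$ and using $\boldsymbol{P} \subset \boldsymbol{\overline Q}$ yields
\begin{gather*}
\|u - q\|_{\boldsymbol{P}+\boldsymbol{\gamma},p} \;\le\; C\,|u-q|_{\boldsymbol{\overline Q}+\boldsymbol{\gamma},p} \;=\; C\,|u|_{\boldsymbol{\overline Q}+\boldsymbol{\gamma},p},
\end{gather*}
where the final equality uses that, for a downward-closed $\boldsymbol{Q}$, the polynomial $q \in (\boldsymbol{Q}+\boldsymbol{\gamma})(\Lambda)$ has vanishing $\boldsymbol{D}^{\boldsymbol{\alpha}+\boldsymbol{\gamma}}$-derivatives for every $\boldsymbol{\alpha} \in \boldsymbol{\overline Q}\setminus \boldsymbol{Q}$.

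The genuinely new contribution is the bound on $\|I(u-q)\|_{\boldsymbol{P}+\boldsymbol{\gamma},p}$, which must be treated patchwise. Applying $\boldsymbol{D}^{\boldsymbol{\gamma}}$ maps $I(u-q)$ into the finite-dimensional subspace $\boldsymbol{D}^{\boldsymbol{\gamma}} S(\hat M)$ of $H^{\boldsymbol{P}}_p(\Lambda)$, interpreted as a sum of polynomial seminorms over the patches $\hat T_i$. The unisolvence condition in \eqref{eq:functionals}, together with $\boldsymbol{0} \in \boldsymbol{P}$ (so that the $L^p$-contribution makes $\|\cdot\|_{\boldsymbol{P},p}$ a genuine norm on this subspace), gives the equivalence of $\|\cdot\|_{\boldsymbol{P},p}$ with $\sum_{i=1}^{j}|F_i(\cdot)|$. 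Hence
\begin{gather*}
\|I(u-q)\|_{\boldsymbol{P}+\boldsymbol{\gamma},p} = \|\boldsymbol{D}^{\boldsymbol{\gamma}} I(u-q)\|_{\boldsymbol{P},p} \le C \sum_{i=1}^{j} |F_i(\boldsymbol{D}^{\boldsymbol{\gamma}} I(u-q))| = C \sum_{i=1}^{j} |F_i(\boldsymbol{D}^{\boldsymbol{\gamma}}(u-q))|,
\end{gather*}
where the first identity simply shifts the derivative $\boldsymbol{D}^{\boldsymbol{\gamma}}$ into the norm index and the last equality uses the invariance half of \eqref{eq:functionals}. Since each $F_i$ is continuous on $H^{\boldsymbol{\overline Q}}_p(\Lambda)$, the final sum is at most $C\|\boldsymbol{D}^{\boldsymbol{\gamma}}(u-q)\|_{\boldsymbol{\overline Q},p}$, which was already bounded by $C|u|_{\boldsymbol{\overline Q}+\boldsymbol{\gamma},p}$ in the previous step. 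Combining the three contributions proves \eqref{eq:interpolestim}.

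The main obstacle, and the only point at which the patchwise setting departs from the single-element argument of \cite{AD92}, lies in the norm-equivalence step: one has to read $\|\cdot\|_{\boldsymbol{P},p}$ as a sum of patchwise polynomial (semi)norms over the $\hat T_i$ and verify that $\boldsymbol{D}^{\boldsymbol{\gamma}} S(\hat M)$ is indeed a finite-dimensional subspace of $H^{\boldsymbol{P}}_p(\Lambda)$ on which all equivalence constants are purely reference-element quantities. Because $\hat M$ has only finitely many patches and $j = \dim \boldsymbol{D}^{\boldsymbol{\gamma}} S(\hat M) < \infty$, this causes no new difficulty and the proof proceeds as outlined.
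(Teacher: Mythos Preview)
Your proof is correct and follows essentially the same approach as the paper: the same decomposition $u-Iu=(u-q)+(q-Iq)-I(u-q)$, the same application of Lemma~\ref{lem:lemma2} to $\boldsymbol{D}^{\boldsymbol{\gamma}}(u-q)$, and the same finite-dimensional norm-equivalence argument on $\boldsymbol{D}^{\boldsymbol{\gamma}}S(\hat M)$ combined with the invariance and boundedness of the $F_i$. The only differences are cosmetic: the paper writes the triangle inequality directly with $\|u-q\|_{\boldsymbol{\overline Q}+\boldsymbol{\gamma},p}$ (absorbing your use of $\boldsymbol{P}\subset\boldsymbol{\overline Q}$), and your explicit mention of downward-closedness for the step $|u-q|_{\boldsymbol{\overline Q}+\boldsymbol{\gamma},p}=|u|_{\boldsymbol{\overline Q}+\boldsymbol{\gamma},p}$ is left implicit in the paper.
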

\begin{proof}
	By Lemma \ref{lem:lemma1} the polynomial $q \in (\boldsymbol{Q}+\boldsymbol{\gamma})(\Lambda)$ satisfying \eqref{eq:polyq} is indeed unique.	
	The triangle inequality gives
	\begin{gather}\label{eq:triangle}
		\|u -  I u\|_{\boldsymbol{P}+\boldsymbol{\gamma},p} \le \|u -  q\|_{\boldsymbol{\overline Q}+\boldsymbol{\gamma},p} + \|q -  I q\|_{\boldsymbol{P}+\boldsymbol{\gamma},p} + \|I(q -  u)\|_{\boldsymbol{P}+\boldsymbol{\gamma},p}.
	\end{gather}
	Based on \eqref{eq:functionals} we observe that $\sum_{i=1}^j |F_i(\cdot)|$ is a norm on $\boldsymbol{D}^{\boldsymbol{\gamma}} S(\hat M)$. Hence, norm equivalence in finite dimensional spaces yields for the last term
	\begin{gather}\label{eq:discrete}
	\begin{aligned}
		\|I(q -  u)\|_{\boldsymbol{P}+\boldsymbol{\gamma},p} &= \|\boldsymbol{D}^{\boldsymbol{\gamma}} I(q-u)\|_{\boldsymbol{P},p} \le C \sum_{i=1}^j \big|F_i\big(\boldsymbol{D}^{\boldsymbol{\gamma}} I(q-u)\big)\big|\\
		&= C \sum_{i=1}^j \big|F_i\big(\boldsymbol{D}^{\boldsymbol{\gamma}} (q-u)\big)\big| \le C \|u-q\|_{\boldsymbol{\overline Q}+\boldsymbol{\gamma},p}.
	\end{aligned}
	\end{gather}
With \eqref{eq:polyq} an application of Lemma \ref{lem:lemma2} gives
	\begin{gather}\label{eq:abschseminorm}
		\|u-q\|_{\boldsymbol{\overline Q}+\boldsymbol{\gamma},p} = \big\| \boldsymbol{D}^{\boldsymbol{\gamma}} (u-q) \big\|_{\boldsymbol{\overline Q},p} \le C |\boldsymbol{D}^{\boldsymbol{\gamma}}u|_{\boldsymbol{\overline Q},p} = C |u|_{\boldsymbol{\overline Q}+\boldsymbol{\gamma},p}.
	\end{gather}
Collecting \eqref{eq:triangle}, \eqref{eq:discrete} and \eqref{eq:abschseminorm} the result follows.
\end{proof}

\begin{rem}
	The estimate \eqref{eq:interpolestim} shows that a macro-element interpolation operator should be designed in such a way that on the macro-element polynomials with a degree as high as possible are reproduced. Ideally, $q=Iq$ for all $q \in (\boldsymbol{Q}+\boldsymbol{\gamma})(\Lambda)$ which leads to the estimate $\|u-Iu\|_{\boldsymbol{P} + \boldsymbol{\gamma}} \le C |u|_{\boldsymbol{\overline Q}+\boldsymbol{\gamma}}$ for all $u \in C^\mu(\Lambda) \cap H^{\boldsymbol{\overline Q} + \boldsymbol{\gamma}}_p(\Lambda)$. Otherwise an additional error component arises due to the inability to reproduce certain polynomials. This is the only difference in comparison with the theory of \cite{AD92} caused by a triangle inequality with $Iq$ in \eqref{eq:triangle}. Such an amendment becomes necessary because in general the polynomial $q \not \in S(\hat M)$ does not lie within the spline space.
\end{rem}

\begin{defn}
	Since the interpolation operator is usually defined by linear functionals we follow the nomenclature of \cite{AD92} and will call the $F_i$ from \eqref{eq:functionals} \emph{associated functionals (with respect to $\boldsymbol{D}^{\boldsymbol{\gamma}}$)}.
\end{defn}


\section[\texorpdfstring{$C^1$ macro-interpolation on anisotropic meshes}{C1 macro-interpolation on anisotropic meshes}]{\texorpdfstring{$C^1$ macro-interpolation on anisotropic tensor product meshes}{C1 macro-interpolation on anisotropic tensor product meshes}}

Before we turn our attention to a rigorous analysis of $\Pi$ from \eqref{eq:interpolant} we want to consider a simpler \emph{reduced} operator. By doing so we demonstrate the developed techniques without getting bogged down in details. Moreover, the insight gained into this reduced interpolation operator will prove to be very useful in the analysis of a quasi-interpolation operator.

\subsection{A reduced macro-element interpolation operator}\label{subsec:reduced}

Let us consider the reference domain $\Lambda\coloneqq[-1,1]^2$ decomposed into the reference macro-element $\hat M \coloneqq\{\hat T_i\}_{i=1,\dots,4}$, where $\hat T_i$ is the intersection of $\Lambda$ with the $i$th quadrant, $i=1,\dots,4$. 
With the basis functions from \eqref{eq:basis2d} we introduce the following \emph{reduced macro-element interpolation operator} $\hat \Pi^r: C^1(\Lambda) \rightarrow S(\hat M)$ with $S(\hat M) \subset \{v \in C^1(\Lambda)\, :\, \left.v\right|_{\hat T_i} \in Q_2,\; i=1,\dots,4\}$,
\begin{gather}\label{eq:redinterpolant}
	(\hat \Pi^r v)(x,y) = \sum_{i,j\in\{-1,1\}} v(i,j) \hat \varphi_{i,j}(x,y) + v_x(i,j) \hat \phi_{i,j}(x,y) + v_y(i,j) \hat \chi_{i,j}(x,y),\quad (x,y) \in \Lambda.
\end{gather}
In comparison to $\Pi$ from \eqref{eq:interpolant} we discard the basis functions associated with the mixed derivative.
Since $\hat \Pi$ maps a sufficiently smooth function into $C^1(\Lambda)$, as was shown in \cite{HHZ2011}, we observe for $v \in C^1(\Lambda)$ that
\begin{gather*}
	\hat \Pi^r v = \hat \Pi \left(\hat \Pi^r v\right) \in C^1(\Lambda).
\end{gather*}
Hence, indeed $S(\hat M) \subset \{v \in C^1(\Lambda)\, :\, \left.v\right|_{\hat T_i} \in Q_2,\; i=1,\dots,4\} \subset W_{2,p}(\Lambda)$. Let us fix $\boldsymbol{\gamma} = (1,0)$. If we seek to apply Lemma \ref{lem:lemma3} to this setting we need to find eight associated functionals $F_i$, $i=1,\dots,8$, since
\begin{gather}\label{eq:basisdx}
\begin{aligned}
	\boldsymbol{D}^{(1,0)} S(\hat M) = \spn \{&\hat\varphi_{-1}'(x)\hat\varphi_{-1}(y),\,\hat\varphi_{-1}'(x)\hat\varphi_{1}(y),\,\hat\psi_{-1}'(x)\hat\varphi_{-1}(y),\,\hat\psi_{1}'(x)\hat\varphi_{-1}(y),\\
&\hat\psi_{-1}'(x)\hat\varphi_{1}(y),\,\hat\psi_{1}'(x)\hat\varphi_{1}(y),\,\hat\varphi_{-1}'(x)\hat\psi_{-1}(y),\,\hat\varphi_{-1}'(x)\hat\psi_{1}(y)\}
\end{aligned}
\end{gather}
is an eight-dimensional space.
Setting $\boldsymbol{P} = \boldsymbol{Q} \coloneqq \{(0,0),(0,1),(1,0)\}$ these functionals must be members of $\left(W_{2,p}(\Lambda)\right)'$.
For $i=1,\dots,4$ let $V_i$ denote the four vertices of $\Lambda$. Then for $v \in W_{2,p}(\Lambda)$ we find
\begin{gather*}
	F_i(v) \coloneqq v(V_i),\quad i=1,\dots,4
\end{gather*}
with $|F_i(v)|\le C \|v\|_{W_{2,p}(\Lambda)}$, $i=1,\dots,4$ due to the well known Sobolev embedding $W_{2,p}(\Lambda) \hookrightarrow C(\Lambda)$ in two dimensions. Moreover, for $u \in H^{\boldsymbol{\overline Q} + \boldsymbol{\gamma}}_p(\Lambda)$, i.e.~$u_x\in W_{2,p}(\Lambda)$ one has
\begin{gather*}
	F_i(\boldsymbol{D}^{\boldsymbol{\gamma}}u) = u_x(V_i) = F_i(\boldsymbol{D}^{\boldsymbol{\gamma}}\hat \Pi^r u),\quad i=1,\dots,4.
\end{gather*}
The other four associated functionals are defined on the edges $E_1 \coloneqq \{(x,-1)\,:\, |x|\le 1\}$ and $E_2 \coloneqq \{(x,1)\,:\, |x|\le 1\}$ of $\Lambda$ which are parallel to the $x$-axis. In fact, they are the mean value and the mean value of the normal derivative:
\begin{gather*}
	F_{4+i}(v) \coloneqq \frac{1}{2} \int_{E_i} v(s) \D s,\qquad
	F_{6+i}(v) \coloneqq \frac{1}{2} \int_{E_i} v_y(s) \D s,\quad i=1,2.
\end{gather*}
By well known trace theorems $|F_i(v)|\le C \|v\|_{W_{2,p}(\Lambda)}$, $i=5,\dots,8$ (see e.g.~\cite{A99} and the references cited in Section 1.3) and
\begin{gather*}
	F_{5}(\boldsymbol{D}^{\boldsymbol{\gamma}}u) = \frac{1}{2} \int_{E_1} u_x(s) \D s = \frac{1}{2}\big(u(V_2) - u(V_1)\big) = \frac{1}{2} \int_{E_1} \boldsymbol{D}^{\boldsymbol{\gamma}} \left(\hat \Pi^r u\right) (s) \D s = F_{5}(\boldsymbol{D}^{\boldsymbol{\gamma}} \hat\Pi^r u),\\
	F_{7}(\boldsymbol{D}^{\boldsymbol{\gamma}}u) = \frac{1}{2} \int_{E_1} u_{xy}(s) \D s = \frac{1}{2} \big(u_y(V_2) - u_y(V_1)\big) =  \frac{1}{2} \int_{E_1} \boldsymbol{D}^{\boldsymbol{\gamma}} \left(\hat \Pi^r u\right)_y(s) \D s = F_{7}(\boldsymbol{D}^{\boldsymbol{\gamma}} \hat \Pi^r u).
\end{gather*}
Similarly, these identities can be shown to hold true for $F_6$ and $F_8$.

Next we show that the functionals $F_i$ define a norm in $\boldsymbol{D}^{\boldsymbol{\gamma}} S(\hat M)$. For this purpose let $u \in \boldsymbol{D}^{\boldsymbol{\gamma}} S(\hat M)$ with $F_i(u)=0$ for $i=1,\dots,8$. Based on the relations
\begin{gather*}
	\hat\psi_i'(k)\hat\varphi_j(\ell) = \delta_{ik} \delta_{j\ell},\qquad i,j,k,\ell\in\{-1,1\}
\end{gather*}
and $v(\pm 1,\pm 1)=0$ for all other basis functions $v$ of $\boldsymbol{D}^{\boldsymbol{\gamma}} S(\hat M)$ in \eqref{eq:basisdx} we find that
\begin{gather*}
	u \in \spn \{ \hat\varphi_{-1}'(x) \hat\varphi_{-1}(y),\, \hat\varphi_{-1}'(x) \hat\varphi_{1}(y),\, \hat\varphi_{-1}'(x) \hat\psi_{-1}(y),\, \hat\varphi_{-1}'(x) \hat\psi_{1}(y) \}.
\end{gather*}
Out of these remaining four basis functions only $\hat\varphi_{-1}'(x) \hat\varphi_{-1}(y)$ is non-trivial on the edge $E_1$. Similarly, only $\hat\varphi_{-1}'(x) \hat\varphi_{1}(y)$ has values different from zero on $E_2$. Moreover, these values are all not positive. Since the mean values $F_5(u) = F_6(u) = 0$ of $u$ vanishes on these edges we conclude that
\begin{gather*}
	u \in \spn \{ \hat\varphi_{-1}'(x) \hat\psi_{-1}(y), \hat\varphi_{-1}'(x) \hat\psi_{1}(y) \}.
\end{gather*}
The remaining two basis functions are treated in the same way: while $\hat\varphi_{-1}'(x) \hat\psi_{-1}(y)$ has a non-trivial and non-positive normal derivative on the Edge $E_1$ we find $\hat\varphi_{-1}'(x) \hat\psi_{1}'(y) \equiv 0$ on $E_1$.
On the edge $E_2$ the relations are exactly the other way round. Hence, $u \equiv 0$.

An application of Lemma \ref{lem:lemma3} yields
\begin{gather}\label{eq:errred1}
\|(u-\hat \Pi^r u)_x\|_{W_{1,p}(\Lambda)} \le C \left( |u_x|_{W_{2,p}(\Lambda)} + \|(q -  \hat \Pi^r q)_x\|_{W_{1,p}(\Lambda)} \right),
\end{gather}
for all $u \in C^1(\Lambda) \cap H^{\overline Q + \boldsymbol{\gamma}}_p(\Lambda)$. The latter means that $u_x \in W_{2,p}(\Lambda)$. The polynomial $q$ is determined by \eqref{eq:polyq} and we want to estimate the second error component of \eqref{eq:errred1} containing it. Obviously, $q \in \big(\boldsymbol{Q}+\boldsymbol{\gamma}\big)(\Lambda)$ has a representation of the form
\begin{gather*}
	q(x,y) = q_1 x + q_2 x y + q_3 x^2\qquad (x,y)\in \Lambda.
\end{gather*}
Here the coefficients $q_i \in \R$, $i=1,2,3$ are determined by $u$. A direct calculation shows that the function $(x,y) \mapsto x$ is invariant under interpolation:
\begin{align}\label{eq:invarprop}
	\hat \Pi^r x &= \big(\hat \varphi_{1,-1}(x,y) + \hat \varphi_{1,1}(x,y) \big) - \big(\hat \varphi_{-1,-1}(x,y) + \hat \varphi_{-1,1}(x,y) \big) + \sum_{i,j\in\{-1,1\}} \hat \phi_{i,j}(x,y) \\
	&= \big(\hat\varphi_{1}(x) - \hat\varphi_{-1}(x) + \hat\psi_{-1}(x) + \hat\psi_{1}(x)\big) \big(\hat\varphi_{-1}(y) + \hat\varphi_{1}(y) \big) = x.
\end{align}
Similarly, the function $(x,y) \mapsto x^2$ is preserved by the interpolation operator on the macro-element, i.e. $\hat \Pi^r(x^2) = x^2$. From \eqref{eq:polyq} with $\boldsymbol{\alpha} = (0,1)$ we determine $q_2 = \frac{1}{4} \int_{\Lambda} u_{xy}(x,y) \,\D x\D y$, hence  
\begin{gather}\label{eq:errred2}
	\|(q -  \hat\Pi^r q)_x\|_{W_{1,p}(\Lambda)} = |q_2|\, \|( x y -  \hat\Pi^r x y)_x\|_{W_{1,p}(\Lambda)} \le C \left| \int_{\Lambda} u_{xy}(x,y) \,\D x \D y \right|.
\end{gather}
Collecting \eqref{eq:errred1} and \eqref{eq:errred2} we arrive at
\begin{gather}\label{eq:respirref}
	\|(u-\hat\Pi^r u)_x\|_{W_{1,p}(\Lambda)} \le C \left( | u_x|_{W_{2,p}(\Lambda)} + \left| \int_{\Lambda} u_{xy}(x,y) \,\D x \D y \right| \right),
\end{gather}
for all $\forall u \in C^1(\Lambda) \cap H^{\boldsymbol{\overline Q} + \boldsymbol{\gamma}}_p(\Lambda)$.

\begin{rem}\label{rem:red11}
	For $\boldsymbol{\gamma} = (1,1)$ one can choose
	\begin{gather*}
		F_i(v) \coloneqq \int_{E_i} v \nach{s}\quad i=1,\dots,4\qquad \text{and} \qquad F_5(v) = \int_{\Lambda} v \nach{s}
	\end{gather*}
	as associated functionals. Here $E_i$ denotes the $i$th edge of $\Lambda$, $i=1,\dots,4$. In fact, it is easy to show that $\sum_{i=1}^5 |F_i(\cdot)|$ is a norm on
	\begin{gather*}
		\boldsymbol{D}^{(1,1)} S(\hat M) = \spn\{\hat\varphi_{-1}'(x)\hat\varphi_{-1}'(y),\,\!\hat\psi_{-1}'(x)\hat\varphi_{-1}'(y),\,\!\hat\psi_{1}'(x)\hat\varphi_{-1}'(y),\,\!\hat\varphi_{-1}'(x)\hat\psi_{-1}'(y),\,\!\hat\varphi_{-1}'(x)\hat\psi_{1}'(y)\}
	\end{gather*}
	and that $F_i(\boldsymbol{D}^{(1,1)} \hat\Pi^r u) = F_i(\boldsymbol{D}^{(1,1)} u)$ for $i=1,\dots,5$. Moreover, $F_i \in \big(W_{1,p}(\Lambda)\big)'$:
	\begin{align*}
		\big|F_i(v)\big| &=\left|\int_{E_i} v \nach{s} \right| \le \|v\|_{L_1(E_i)} \le C \|v\|_{W_{1,1}(\Lambda)} \le C \|v\|_{W_{1,p}(\Lambda)} \quad i=1,\dots,4,\\
		\big|F_5(v)\big| &= \left|\int_{\Lambda} v \,\D x\D y \right| \le C \|v\|_{L_p(\Lambda)} \le C \|v\|_{W_{1,p}(\Lambda)},
	\end{align*}
	based on Sobolev embeddings and H\"older's inequality. Hence,
	\begin{gather}\label{eq:dxypirref}
	\|(u-\hat\Pi^r u)_{xy}\|_{L_p(\Lambda)} \le C \left( |u_{xy}|_{W_{1,p}(\Lambda)} + \left| \int_{\Lambda} u_{xy}(x,y) \,\D x \D y \right| \right)
	\end{gather}
\end{rem}

Now, let $\mathcal{M}$ be a tensor product mesh of $\Omega$. We shall refer to $\mathcal{M}$ as the macro-element mesh and do not require it to be quasi-uniform, i.e.~there are no restrictions on the element sizes of the underlaying 1D-triangulations $\mathcal{M}_x$ and $\mathcal{M}_y$ of the macro-element mesh.
We obtain the element mesh $\mathcal{T}$ as the tensor product mesh of the two 1D-triangulations that are generated by subdividing every element of $\mathcal{M}_x$ and $\mathcal{M}_y$ uniformly into two elements of equal size. 
The choice of the midpoint as transition point of a macro-element $M$ is not significant. The theory can handle any subdivision such that the elements within one macro are comparable in size. However, it simplifies the presentation.
See Figure \ref{fig:macro-element} for a graphical representation of $\mathcal{M}$ and $\mathcal{T}$.

\begin{figure}
	\centering
	\includegraphics[width=.6\textwidth]{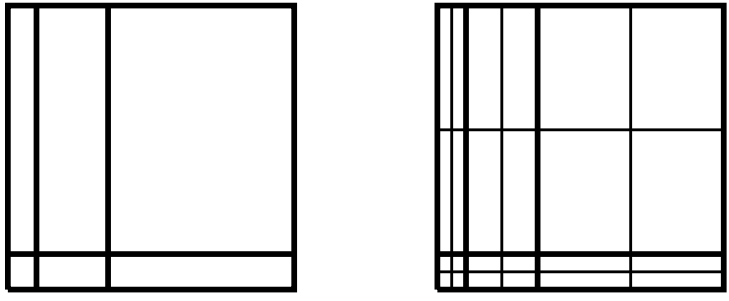}
	\caption{macro-element mesh $\mathcal{M}$ (left) and element mesh $\mathcal{T}$ (right).}
	\label{fig:macro-element}
\end{figure}

Let $M \in \mathcal{M}$ be the macro-element $M = [x_0-h_1,x_0+h_1] \times [y_0-h_2,y_0+h_2]$. Note that $M$ consists out of the four elements of $\mathcal{T}$ that share the vertex $(x_0,y_0)$. Introducing the reference mapping $F_M$ from $[-1,1]^2$ to $M$ by
\begin{gather}\label{eq:mapping}
		x = x_0 + h_1 \hat x,\qquad y = y_0 + h_2 \hat y,
\end{gather}
we obtain anisotropic error estimates for the macro-interpolation operator $\Pi^r u \coloneqq \hat \Pi^r \hat u \circ F_M^{-1}$ with $\hat u \coloneqq u \circ F_M$ on $M$.
\begin{thm}\label{thm:Pired}
	Associated with the shape of the macro-element $M$ let $\boldsymbol{h}\coloneqq(h_1,h_2)$. For $u \in C^1(M)$ with $u_x \in W_{2,p}(M)$ we have the estimate
	\begin{gather}\label{eq:estimred}
		\|\left( u - \Pi^r u \right)_x\|_{L_p(M)} \le C \left( \sum_{|\boldsymbol{\alpha} |=2} \boldsymbol{h}^{\boldsymbol{\alpha}} \|\boldsymbol{D}^{\boldsymbol{\alpha}} u_x\|_{L_p(M)} + h_2 \left| \int_{M} u_{xy}(x,y) \,\D x \D y \right| \right).
	\end{gather}
\end{thm}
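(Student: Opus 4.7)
The proof follows the standard affine-equivalence pattern: pull $u$ back to the reference macro-element $\Lambda$ via $F_M$, invoke the reference inequality \eqref{eq:respirref}, and push the resulting estimate forward to $M$, keeping careful track of the anisotropic scaling factors.

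Concretely, I would set $\hat u \coloneqq u \circ F_M$, so by construction of $\Pi^r$ one has $(\Pi^r u) \circ F_M = \hat \Pi^r \hat u$. The chain rule applied to \eqref{eq:mapping} yields $\boldsymbol{D}^{\boldsymbol{\alpha}} \hat u = \boldsymbol{h}^{\boldsymbol{\alpha}} (\boldsymbol{D}^{\boldsymbol{\alpha}} u) \circ F_M$, and combining with the Jacobian $h_1 h_2$ of $F_M$ gives the standard scaling identity
\[
 \|\boldsymbol{D}^{\boldsymbol{\alpha}} \hat v\|_{L_p(\Lambda)} = \boldsymbol{h}^{\boldsymbol{\alpha}}(h_1 h_2)^{-1/p} \|\boldsymbol{D}^{\boldsymbol{\alpha}} v\|_{L_p(M)},
\]
which I would apply throughout with $v = u - \Pi^r u$ as well as $v = u$.

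Next I apply \eqref{eq:respirref} to $\hat u$, using $\|\cdot\|_{L_p(\Lambda)} \le \|\cdot\|_{W_{1,p}(\Lambda)}$ on the left, and then scale each term back to $M$. The seminorm $|\hat u_{\hat x}|_{W_{2,p}(\Lambda)}$ transforms, term by term for $|\boldsymbol{\alpha}| = 2$, to $\boldsymbol{h}^{\boldsymbol{\alpha} + (1,0)}(h_1 h_2)^{-1/p} \|\boldsymbol{D}^{\boldsymbol{\alpha}} u_x\|_{L_p(M)}$. The integral is scale-invariant in a clean way: because $\hat u_{\hat x \hat y} = h_1 h_2\,(u_{xy}\circ F_M)$ while $\D\hat x\,\D\hat y = (h_1 h_2)^{-1} \D x\,\D y$, the two factors of $h_1 h_2$ cancel and one obtains $\int_\Lambda \hat u_{\hat x \hat y}\,\D\hat x\,\D\hat y = \int_M u_{xy}\,\D x\,\D y$. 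Pushing the left-hand side $\|(\hat u - \hat \Pi^r \hat u)_{\hat x}\|_{L_p(\Lambda)}$ back to $M$ multiplies the whole inequality by $h_1^{-1}(h_1 h_2)^{1/p}$; on the seminorm side this collapses to $\sum_{|\boldsymbol{\alpha}| = 2} \boldsymbol{h}^{\boldsymbol{\alpha}} \|\boldsymbol{D}^{\boldsymbol{\alpha}} u_x\|_{L_p(M)}$, while on the integral side it produces the anisotropic coefficient of $\bigl|\int_M u_{xy}\bigr|$ claimed in \eqref{eq:estimred}.

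The main obstacle is really just careful bookkeeping: there are three distinct sources of $h_1$ and $h_2$ powers (chain-rule differentiation, the Jacobian from the change of variables, and the $L_p$-volume factor), and one must track their interplay for each term separately. The only substantive observation is the clean cancellation between the derivative factor $h_1 h_2$ from $\partial_{\hat x}\partial_{\hat y}$ and the inverse Jacobian in the integral term, which is what makes the integral coefficient anisotropic rather than isotropic. Beyond that, no ideas are needed apart from the reference estimate \eqref{eq:respirref}; the entire argument is affine scaling.
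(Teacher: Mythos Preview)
Your approach is exactly the paper's: pull back via $F_M$, apply the reference estimate \eqref{eq:respirref}, and scale each term back using $\boldsymbol{D}^{\boldsymbol{\alpha}}\hat u = \boldsymbol{h}^{\boldsymbol{\alpha}}(\boldsymbol{D}^{\boldsymbol{\alpha}}u)\circ F_M$ together with the Jacobian $h_1h_2$. The bookkeeping you describe---including the cancellation $\int_\Lambda \hat u_{\hat x\hat y}\,\D\hat x\,\D\hat y = \int_M u_{xy}\,\D x\,\D y$---is precisely what the paper carries out.
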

\begin{proof}
	The proof uses change of variables, the result \eqref{eq:respirref} on the reference macro-element and the relation $\boldsymbol{D}^{\boldsymbol{\alpha}} = \boldsymbol{h}^{-\boldsymbol{\alpha}} \boldsymbol{\hat D}^{\boldsymbol{\alpha}}$:
	\begin{gather*}
		\|\boldsymbol{D}^{(1,0)}\left( u - \Pi^r u \right)\|_{L_p(M)}^p = h_1^{-p} \left \|\boldsymbol{\hat D}^{(1,0)}\left( \hat u - \hat \Pi^r \hat u \right) \right \|_{L_p(\Lambda)}^p h_1 h_2\\
		\le C h_1^{-p} \left( \left| \boldsymbol{\hat D}^{(1,0)} \hat u \right|_{W_{2,p}(\Lambda)}^p + \left| \int_{\Lambda} \hat u_{xy}(\hat x,\hat y) \,\D \hat x \D \hat y \right|^p \right) h_1 h_2\\
		\le C h_1^{-p} \left( \sum_{|\boldsymbol{\alpha} |=2} h_1^p \boldsymbol{h}^{p \boldsymbol{\alpha}} \left\| \boldsymbol{D}^{\boldsymbol{\alpha}} \boldsymbol{D}^{(1,0)} u \right\|_{L_p(M)}^p + h_1^p h_2^p \left| \int_{M} \boldsymbol{D}^{(1,1)} u(x,y) \,\D x \D y \right|^p \right).
	\end{gather*}
	Which is the desired estimate. In the case $p=\infty$ some minor modifications are needed.
\end{proof}

\begin{rem}
	The diagonal form of the affine reference mapping $F_M$ according to \eqref{eq:mapping} is needed for affine equivalence of the interpolation operator. Note that only in this case $Q_k$ elements are affine equivalent.
\end{rem} 
 
\begin{rem}
	While for functions $u \in C^1(M)$ with $u_x \in W_{2,p}(M)$ the reduced interpolation operator $\Pi^r$ is not of second order in the $W_{1,p}$ semi-norm it is of optimal second order if additionally the mean value of the mixed derivative $u_{xy}$ vanishes on $M$. Clearly, this reduction in approximation ability corresponds to discarding the basis functions $\hat\psi_{\pm 1,\pm 1}$ in \eqref{eq:redinterpolant}.
\end{rem} 

\begin{rem}
Similarly, one can can deduce from the result in Remark \ref{rem:red11} that
\begin{gather*}
	\|(u-\Pi^r u)_{xy}\|_{L_p(M)} \le C \left( \sum_{|\boldsymbol{\alpha} |=1} \boldsymbol{h}^{\boldsymbol{\alpha}} \|\boldsymbol{D}^{\boldsymbol{\alpha}} u_{xy}\|_{L_p(M)} + \left| \int_{M} u_{xy}(x,y) \,\D x \D y \right| \right).
\end{gather*}
Clearly, this result is in general unsatisfactory. The inability to yield anisotropic interpolation error estimates for second order derivatives of the approximation error is caused by discarding the basis functions corresponding to the mixed derivative.
\end{rem}

\subsection{\texorpdfstring{The full $C^1-Q_2$ interpolation operator}{The full C1-Q2 interpolation operator}}\label{subsec:c1q2}

As a second example we want to consider the interpolation operator $\hat \Pi$ of \eqref{eq:interpolant}. We refer to it as \emph{full} not only to contrast it from the reduced operator in the previous subsection but also to underline the property \eqref{eq:full} of its underlaying macro-element space. Since this operator is closely related to interpolation on the bicubic $C^1$ Bogner-Fox-Schmidt element, we shall first give a result from the literature. To the knowledge of the author there exists only one paper dealing with anisotropic interpolation error estimates for this element. In \cite{CYM2009} the authors derive the result
\begin{gather}\label{eq:resBFS}
	\left \|\boldsymbol{\hat D}^{\boldsymbol{\gamma}} \left( \hat u - \hat I_{12} \hat u\right) \right \|_{0,\hat K} \le C \left| \boldsymbol{\hat D}^{\boldsymbol{\gamma}} \hat u\right|_{4-|\boldsymbol{\gamma}|,\hat K},
\end{gather}
for $|\boldsymbol{\gamma}|\le 2$ and $\hat u \in H^4(\hat K)$ on the reference element $\hat K \coloneqq [0,1]^2$. Here $I_{12}$ is the analogue of $\hat\Pi$ in the space of bicubic polynomials, i.e.~the Lagrangian basis functions in \eqref{eq:interpolant} have to be replaced by bicubic polynomials satisfying the same (duality and Kronecker) relations. Using affine transformation this result can be extended to
\begin{gather}\label{eq:estimBFS}
	\left \| \boldsymbol{D}^{\boldsymbol{\gamma}} \left(  u - I_{12} u\right) \right \|_{0, K} \le C \sum_{|\boldsymbol{\alpha}|=4-|\boldsymbol{\gamma}|} \boldsymbol{h}^{\boldsymbol{\alpha}} \left\| \boldsymbol{D}^{\boldsymbol{\alpha}} \boldsymbol{D}^{\boldsymbol{\gamma}} u \right\|_{0,K},
\end{gather}
for $|\boldsymbol{\gamma}|\le 2$ and $u \in H^4(K)$ on a rectangular element $K$ with sides aligned to the coordinate axes and $\boldsymbol{h}=(h_1,h_2)$ with edge lengths $h_i$, $i=1,2$. 

However, in \cite{CYM2009} the theory of Apel \cite{AD92,A99} is not used to obtain this result. Instead a new interpolation operator $\hat L_1$ is introduced such that $\hat L_1(\boldsymbol{\hat D}^{\boldsymbol{\gamma}} \hat u) = \boldsymbol{\hat D}^{\boldsymbol{\gamma}} \hat I_{12} \hat u$ and standard interpolation theory is applied to obtain a bound for the interpolation error of $\hat L_1$. Since we are dealing with only piecewise polynomials this path is blocked for us. A spin-off of our discussion will be how the results \eqref{eq:resBFS} and \eqref{eq:estimBFS} can be obtained using Apel's theory. The key is to recognize that divided differences can be used as associated functionals. Since we need certain Sobolev embeddings, we focus on the case $p=2$, which also appears to be the most important one with respect to applications.

Inspired by \cite{CYM2009} we generalize the Newtonian representation \eqref{eq:newtonSpline} to two dimensions obtaining
\begin{gather}\label{eq:newton2D}
	\begin{gathered}
	\big(\hat \Pi u\big)(x,y) = \sum_{i=1}^3 \sum_{j=1}^3 F_{i,j}(u) (x+1)^{i-1} (y+1)^{j-1} + 4 \sum_{j=1}^3 F_{4,j}(u) \hat\psi_1(x) (y+1)^{j-1}\\
	+ 4 \sum_{i=1}^3 F_{i,4}(u) (x+1)^{i-1} \hat\psi_1(y) + 16 F_{4,4}(u) \hat\psi_1(x) \hat\psi_1(y).
	\end{gathered}
\end{gather}
Here the 16 functionals $F_{i,j}$, $i,j=1,\dots,4$ are two-dimensional divided differences with multiple knots, see e.g.~\cite{PB2009}. If we define a sorted node sequence by 
\begin{gather*}
	n_i = \left \{
	\begin{alignedat}{2}
	&{-1}&\quad &\text{for $i=1$,}\\
	&{-1},{-1}&\quad &\text{for $i=2$,}\\
	&{-1},{-1},1&\quad &\text{for $i=3$,}\\
	&{-1},{-1},1,1&\quad &\text{for $i=4$,}\\
	\end{alignedat}\right.
\end{gather*}
then $F_{i,j}(u) \coloneqq u[n_i;n_j]$ is the divided difference of order $i-1$ to $x$ and order $j-1$ to $y$:
\begin{defn}
	For a fixed $y \in [-1,1]$ let 
	\begin{gather*}
		u_{n_i}(y) \coloneqq u(\cdot,y)[n_i]
	\end{gather*}
	denote the parametrized one dimensional divided difference (with respect to $x$ and the node sequence $n_i$). Then the two dimensional divided difference $u[n_i;n_j]$ is defined by
	\begin{gather*}
		u[n_i;n_j] \coloneqq u_{n_i}[n_j].
	\end{gather*}
\end{defn}

\begin{rem}
	Because of $u[n_i;n_j] = \big(u(x,\cdot)[n_j]\big)[n_i]$ one can start with the evaluation in $y$, as well.
\end{rem}

We find that
\begin{gather}\label{eq:valderiv}
	F_{i,j}(u) = u[\overbrace{{-1},\dots,{-1}}^{i \text{ times}} ; \overbrace{{-1},\dots,{-1}}^{j \text{ times}}] =  \boldsymbol{D}^{(i-1,j-1)} u(-1,-1), \qquad i,j=1,2.
\end{gather}
Moreover, using \eqref{eq:dividiff} for instance
\begin{align*}
	F_{3,1}&=u[{-1},{-1},1;{-1}] = \frac{1}{4} u(1,-1) - \frac{1}{4} u(-1,-1) - \frac{1}{2} u_x(-1,-1),\\
	F_{4,1}&= u[{-1},{-1},1,1;{-1}]=\frac{1}{4} u(-1,-1)-\frac{1}{4}u(1,-1)+\frac{1}{4} u_x(-1,-1)+\frac{1}{4} u_x(1,-1),\\
	F_{3,2}&=u[{-1},{-1},1;{-1},{-1}] = \frac{1}{4}u_y(1,-1)-\frac{1}{4} u_y(-1,-1)-\frac{1}{2} u_{xy}(-1,-1),\\
	F_{4,2}&= u[{-1},{-1},1,1;{-1},{-1}] = \frac{1}{4} u_y(-1,-1)-\frac{1}{4}u_y(1,-1)+\frac{1}{4} u_{xy}(-1,-1)+\frac{1}{4} u_{xy}(1,-1).
\end{align*}
Similarly, the other divided differences can be calculated, e.g.
\begin{align*}
	F_{3,3} = u[{-1},{-1},1;{-1},{-1},1] &= \frac{1}{4}\left( \frac{1}{4} u(1,1) - \frac{1}{4} u(-1,1) - \frac{1}{2} u_x(-1,1)\right)\\ 
	&\quad- \frac{1}{4}\left( \frac{1}{4} u(1,-1) - \frac{1}{4} u(-1,-1) - \frac{1}{2} u_x(-1,-1)\right)\\
	&\quad- \frac{1}{2}\left( \frac{1}{4} u_y(1,-1) - \frac{1}{4} u_y(-1,-1) - \frac{1}{2} u_{xy}(-1,-1)\right).
\end{align*}
Obviously, all divided differences $F_{i,j}$, $i,j=1,\dots,4$ can be expressed as linear combinations of the interpolation data $\{u(\pm 1,\pm 1),u_x(\pm 1,\pm 1),u_y(\pm 1,\pm 1),u_{xy}(\pm 1,\pm 1)\}$.

In contrast to Subsection \ref{subsec:reduced} we want to consider an arbitrary multi-index $\boldsymbol{\gamma}$ with $|\boldsymbol{\gamma}|\le 2$ here. Consequently, certain sets and functionals depend on the specific choice of $\boldsymbol{\gamma}$ and we emphasize this by using the additional subscript or superscript $\boldsymbol{\gamma}$. By applying the differential operator $\boldsymbol{D}^{\boldsymbol{\gamma}}$ to the representation \eqref{eq:newton2D} we observe that the space $\boldsymbol{D}^{\boldsymbol{\gamma}} S(\hat M)$ can be normed by $\sum_{(i,j) \in J_{\boldsymbol{\gamma}}} |F_{i,j}(\cdot)|$. Here
\begin{gather*}
	J_{(\gamma_1,\gamma_2)} = \{(i,j)\, : \, i=\gamma_1+1,\dots,4 \text{ and } j=\gamma_2+1,\dots,4\}.
\end{gather*}
Note also that by construction $\dim \boldsymbol{D}^{\boldsymbol{\gamma}} S(\hat M) = |J_{\boldsymbol{\gamma}}|$. We want to apply Lemma \ref{lem:lemma3} with $\boldsymbol{Q} \coloneqq \{\boldsymbol{\alpha} \, : \, |\boldsymbol{\alpha}| \le 3 - |\boldsymbol{\gamma}|\}$. In order to establish that $F_{i,j}$ with $(i,j) \in J_{\boldsymbol{\gamma}}$ are associated functionals according to \eqref{eq:functionals} we have to show that the divided differences $F_{i,j}$ can be interpreted as the application of a linear functional $F_{i,j}^{\boldsymbol{\gamma}} \in \left(H^{4-|\boldsymbol{\gamma}|}(\Lambda)\right)'$ on the derivative $\boldsymbol{D}^{\boldsymbol{\gamma}} u$ such that 
\begin{gather*}
	F_{i,j}^{\boldsymbol{\gamma}} (\boldsymbol{D}^{\boldsymbol{\gamma}} \hat\Pi u) = F_{i,j}^{\boldsymbol{\gamma}} (\boldsymbol{D}^{\boldsymbol{\gamma}} u),\qquad (i,j)\in J_{\boldsymbol{\gamma}},\qquad \forall u \in \{ v \in C^2(\Lambda)\, :\, \boldsymbol{D}^{\boldsymbol{\gamma}} v \in H^{4-|\boldsymbol{\gamma}|}(\Lambda) \}.
\end{gather*}

Following \cite{CYM2009} we reinterpret \eqref{eq:valderiv} in the form 
\begin{gather*}
	F_{i,j}(u) = \boldsymbol{D}^{(i-1-\gamma_1,j-1-\gamma_2)} \boldsymbol{D}^{\boldsymbol{\gamma}} u(-1,-1) =: F_{i,j}^{\boldsymbol{\gamma}} (\boldsymbol{D}^{\boldsymbol{\gamma}} u), \qquad (i,j)\in J_{\boldsymbol{\gamma}},\, i,j=1,2.
\end{gather*}
Since all these $F_{i,j}$ can be expressed as linear combinations of the interpolation data we have
\begin{gather}\label{eq:interpolinvar}
	F_{i,j}^{\boldsymbol{\gamma}} (\boldsymbol{D}^{\boldsymbol{\gamma}} u) = F_{i,j}(u) = F_{i,j}(\hat\Pi u) = F_{i,j}^{\boldsymbol{\gamma}} (\boldsymbol{D}^{\boldsymbol{\gamma}} \hat\Pi u)
\end{gather} 
for $(i,j)\in J_{\boldsymbol{\gamma}}$, $i,j=1,2$.
Moreover, from a standard Sobolev embedding $H^{4-|\boldsymbol{\gamma}|} \hookrightarrow C^{2-|\boldsymbol{\gamma}|}$ for $v \in H^{4-|\boldsymbol{\gamma}|}(\Lambda)$
\begin{gather*}
	\left| F_{i,j}^{\boldsymbol{\gamma}} (v) \right| \le \left | \boldsymbol{D}^{(i-1-\gamma_1,j-1-\gamma_2)} v(-1,-1) \right| \le C \|v\|_{4-|\boldsymbol{\gamma}|}, \qquad (i,j)\in J_{\boldsymbol{\gamma}},\, i,j=1,2.
\end{gather*}
 
For the other divided differences we need some kind of Peano form which was developed in \cite{CYM2009}. In fact, replacing $u(1)$ and $u'(1)$ in \eqref{eq:dividiff} by the Taylor expansions
\begin{gather*}
	u(1) = u(-1) + 2 u'(-1) + \int_{-1}^1 (1-x) u''(x) \,\D x\qquad\text{and}\qquad u'(1) = u'(-1) + \int_{-1}^1 u''(x) \,\D x
\end{gather*}
one obtains
\begin{gather}\label{eq:peanoform}
	u[{-1},{-1},\overbrace{1\dots 1}^{i \text{ times}}] = \int_{-1}^1 s_i(x) u''(x) \D x\qquad \text{with}\qquad s_i(x)=\left \{
	\begin{alignedat}{2}
		& (1-x)/4,&\quad &\text{for $i=1$,}\\
		& x/4,&\quad &\text{for $i=2$.}\\
	\end{alignedat}\right.
\end{gather}
With respect to \eqref{eq:interpolinvar} it is important to note that this identity does not only hold for $C^2([-1,1])$ functions but also for the quadratic $C^1$ splines considered as can be checked by examining all the basis functions $\hat\varphi_{\pm 1}$ and $\hat\psi_{\pm 1}$, for instance
\begin{gather*}
	\hat\varphi_{-1}[-1,-1,1] = -\frac{1}{4} = \int_{-1}^1 \frac{(1-x)}{4} \hat\varphi_{-1}''(x) \D x.
\end{gather*}
Clearly, $\left| s_i(x) \right| \le \frac{1}{2}$ for $x\in[-1,1]$ and $i=1,2$. Hence, one can conclude that
\begin{align*}
	F_{i,j}(u) &= u[n_{i};n_j] = \int_{-1}^1 s_{i-2}(x) \boldsymbol{D}^{(2,j-1)} u(x,-1) \,\D x \\
	&= \int_{-1}^1 s_{i-2}(x) \boldsymbol{D}^{(2-\gamma_1,j-1-\gamma_2)} \boldsymbol{D}^{\boldsymbol{\gamma}} u(x,-1) \,\D x =: F_{i,j}^{\boldsymbol{\gamma}}(\boldsymbol{D}^{\boldsymbol{\gamma}} u),
\end{align*}
for $(i,j)\in J_{\boldsymbol{\gamma}},\, i=3,4,\,j=1,2$ and with
\begin{gather*}
	\left| F_{i,j}^{\boldsymbol{\gamma}}(v)\right| \le \frac{1}{2} \int_{-1}^1 \left| \boldsymbol{D}^{(2-\gamma_1,j-1-\gamma_2)} v(x,-1) \right| \,\D x \le C \|v\|_{4-|\boldsymbol{\gamma}|}, \qquad (i,j)\in J_{\boldsymbol{\gamma}},\, i=3,4,\,j=1,2.
\end{gather*}
for $v\in H^{4-|\boldsymbol{\gamma}|}(\Lambda)$ by a trace theorem (c.p.~\cite{CYM2009}). Note that the identities in \eqref{eq:interpolinvar} hold true for $(i,j)\in J_{\boldsymbol{\gamma}},\, i=3,4,\,j=1,2$, as well.

In exactly the same manner we treat the functionals $F_{i,j}$ with $(i,j)\in J_{\boldsymbol{\gamma}},\, i=1,2,\,j=3,4$:
\begin{align*}
	F_{i,j}(u) &= u[n_{i};n_j] = \int_{-1}^1 s_{j-2}(y) \boldsymbol{D}^{(i-1,2)} u(-1,y) \,\D x \\
	&= \int_{-1}^1 s_{j-2}(y) \boldsymbol{D}^{(i-1-\gamma_1,2-\gamma_2)} \boldsymbol{D}^{\boldsymbol{\gamma}} u(-1,y) \,\D y =: F_{i,j}^{\boldsymbol{\gamma}}(\boldsymbol{D}^{\boldsymbol{\gamma}} u).
\end{align*}
Using the same argument as before it is easy to obtain $\left| F_{i,j}^{\boldsymbol{\gamma}}(v)\right| \le \|v\|_{4-|\boldsymbol{\gamma}|}$ and \eqref{eq:interpolinvar} for
$(i,j)\in J_{\boldsymbol{\gamma}},\, i=1,2,\,j=3,4$.

Finally, we consider $F_{i,j}$ for $i,j=3,4$.
\begin{align*}
	F_{i,j}(u) &= u[n_{i};n_j] = \int_{\Lambda} s_{i-2}(x) s_{j-2}(y) \boldsymbol{D}^{(2,2)} u(x,y) \,\D x\D y \\
	&= \int_{\Lambda} s_{i-2}(x) s_{j-2}(y) \boldsymbol{D}^{(2-\gamma_1,2-\gamma_2)} \boldsymbol{D}^{\boldsymbol{\gamma}} u(x,y) \,\D x\D y =: F_{i,j}^{\boldsymbol{\gamma}}(\boldsymbol{D}^{\boldsymbol{\gamma}} u), \qquad i,j=3,4.
\end{align*}
Again the functionals can be shown to be bounded. Using the Cauchy Schwarz inequality
\begin{align*}
	\left| F_{i,j}^{\boldsymbol{\gamma}} (v) \right| \le \frac{1}{4} \int_{\Lambda} \left| \boldsymbol{D}^{(2-\gamma_1,2-\gamma_2)} v(x,y) \right| \,\D x\D y \le C \left\| \boldsymbol{D}^{(2-\gamma_1,2-\gamma_2)} v \right\|_0 \le C \|v\|_{4-|\boldsymbol{\gamma}|}
\end{align*}
for $i,j=3,4$. Additionally, \eqref{eq:interpolinvar} holds true for $i,j=3,4$.

Hence, for a given differential operator $\boldsymbol{D}^{\boldsymbol{\gamma}}$ with $|\boldsymbol{\gamma}| \le 2$ we can use $F_{i,j}^{\boldsymbol{\gamma}}$, $(i,j)\in J_{\boldsymbol{\gamma}}$ as associated functionals and Lemma \ref{lem:lemma3} yields for $u\in C^2(\Lambda)$ with $\boldsymbol{D}^{\boldsymbol{\gamma}} u \in H^{4-|\boldsymbol{\gamma}|}(\Lambda)$ that
\begin{gather}\label{eq:estimPi}
	\left\| \boldsymbol{D}^{\boldsymbol{\gamma}} \left( u- \hat\Pi u \right) \right\|_{2-|\boldsymbol{\gamma}|} \le C \left( \left| \boldsymbol{D}^{\boldsymbol{\gamma}} u \right|_{4-|\boldsymbol{\gamma}|} + \left\| \boldsymbol{D}^{\boldsymbol{\gamma}} \left( q - \hat\Pi q \right) \right\|_{2-|\boldsymbol{\gamma|}} \right).
\end{gather}
Here the polynomial $q \in (\boldsymbol{Q}+\boldsymbol{\gamma})(\Lambda)$ is defined by \eqref{eq:polyq} where $\boldsymbol{Q}\coloneqq \{\boldsymbol{\alpha} \, : \, |\boldsymbol{\alpha}| \le 3 - |\boldsymbol{\gamma}|\}$. Hence, with $\boldsymbol{X}=(x,y)$,
\begin{gather*}
	q(x,y) = \sum_{\boldsymbol{\alpha} \in \boldsymbol{Q}} q_{\boldsymbol{\alpha} +\boldsymbol{\gamma}} \boldsymbol{X}^{\boldsymbol{\alpha} +\boldsymbol{\gamma}} = \sum_{\substack{\boldsymbol{\alpha} \in \boldsymbol{Q} \\ |\boldsymbol{\alpha} + \boldsymbol{\gamma}| < 3}} q_{\boldsymbol{\alpha}+\boldsymbol{\gamma}} \boldsymbol{X}^{\boldsymbol{\alpha}+\boldsymbol{\gamma}} + \sum_{\substack{\boldsymbol{\alpha} \in \boldsymbol{Q} \\ |\boldsymbol{\alpha} + \boldsymbol{\gamma}| = 3}} q_{\boldsymbol{\alpha}+\boldsymbol{\gamma}} \boldsymbol{X}^{\boldsymbol{\alpha}+\boldsymbol{\gamma}},
\end{gather*}
with coefficients $q_{\boldsymbol{\alpha}+\boldsymbol{\gamma}} = q_{\boldsymbol{\alpha}+\boldsymbol{\gamma}}(u) \in \R$ for $\boldsymbol{\alpha} \in \boldsymbol{Q}$.
A simple calculation shows that $\hat\Pi(x^{\boldsymbol{\beta}}) = x^{\boldsymbol{\beta}}$ on $\Lambda$ for all $\boldsymbol{\beta}$ satisfying $|\boldsymbol{\beta}| < 3$. Therefore we obtain with a triangle inequality
\begin{gather*}
	\left\| \boldsymbol{D}^{\boldsymbol{\gamma}} \left( q - \hat\Pi q \right) \right\|_{2-|\boldsymbol{\gamma}|} \le \sum_{\substack{\boldsymbol{\alpha} \in \boldsymbol{Q} \\ |\boldsymbol{\alpha} + \boldsymbol{\gamma}| = 3}} |q_{\boldsymbol{\alpha}+\boldsymbol{\gamma}}|\, \left\| \boldsymbol{D}^{\boldsymbol{\gamma}} \big(\boldsymbol{X}^{\boldsymbol{\alpha}+\boldsymbol{\gamma}} - \hat\Pi (\boldsymbol{X}^{\boldsymbol{\alpha}+\boldsymbol{\gamma}})\big) \right\|_{2-|\boldsymbol{\gamma}|} \le C \sum_{\substack{\boldsymbol{\alpha} \in \boldsymbol{Q} \\ |\boldsymbol{\alpha}+\boldsymbol{\gamma}| = 3}} |q_{\boldsymbol{\alpha}+\boldsymbol{\gamma}}|.
\end{gather*}
Note that the last summation is carried out over multi-indices of highest order for which we observe by \eqref{eq:polyq}
\begin{align*}
	\int_{\Lambda} \boldsymbol{D}^{\boldsymbol{\alpha} +\boldsymbol{\gamma}} q(x,y)\,\D x\D y &= 4 (\alpha_1+\gamma_1)!(\alpha_2+\gamma_2)!\, q_{\boldsymbol{\alpha}+\boldsymbol{\gamma}}\\ 
	&= \int_{\Lambda} \boldsymbol{D}^{\boldsymbol{\alpha} +\boldsymbol{\gamma}} u(x,y) \,\D x\D y,\qquad \boldsymbol{\alpha} \in \boldsymbol{Q},\, |\boldsymbol{\alpha}+\boldsymbol{\gamma}| = 3.
\end{align*}
Hence,
\begin{gather}\label{eq:polypartPi}
	\left\| \boldsymbol{D}^{\boldsymbol{\gamma}} \left( q - \hat\Pi q \right) \right\|_{2-|\boldsymbol{\gamma}|} \le C \sum_{|\boldsymbol{\alpha}|=3-|\boldsymbol{\gamma}|} \left| \int_{\Lambda} \boldsymbol{D}^{\boldsymbol{\alpha}} \boldsymbol{D}^{\boldsymbol{\gamma}} u(x,y) \,\D x\D y \right|.
\end{gather}
Collecting \eqref{eq:estimPi} and \eqref{eq:polypartPi} we obtain another main result.

\begin{thm}\label{thm:approxPi} 
	Let $\boldsymbol{\gamma}$ be a multi-index with $|\boldsymbol{\gamma}| \le 2$ and let $\hat \Pi$ denote the full $C^1-Q_2$ interpolation operator defined in \eqref{eq:interpolant} on the reference macro-element $\hat M$.
	For $u \in H^4(\Lambda)$ we have the estimate
	\begin{gather*}
		\big\|\boldsymbol{D}^{\boldsymbol{\gamma}} \big( u - \hat\Pi u \big)\big\|_0 \le C \left( \left| \boldsymbol{D}^{\boldsymbol{\gamma}} u \right|_{4-|\boldsymbol{\gamma}|} + \sum_{|\boldsymbol{\alpha}|=3-|\boldsymbol{\gamma}|} \left| \int_{\Lambda} \boldsymbol{D}^{\boldsymbol{\alpha}} \boldsymbol{D}^{\boldsymbol{\gamma}} u(x,y) \,\D x\D y \right| \right).
	\end{gather*}
\end{thm}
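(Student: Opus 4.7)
The plan is to assemble the theorem directly from the machinery set up in this subsection, essentially by collecting the intermediate inequalities \eqref{eq:estimPi} and \eqref{eq:polypartPi} that have already been prepared. First I would check that the hypotheses of Lemma \ref{lem:lemma3} are in force for the chosen parameters $\boldsymbol{P}=\{\boldsymbol{\alpha}:|\boldsymbol{\alpha}|\le 2-|\boldsymbol{\gamma}|\}$ and $\boldsymbol{Q}=\{\boldsymbol{\alpha}:|\boldsymbol{\alpha}|\le 3-|\boldsymbol{\gamma}|\}$: the space $\boldsymbol{D}^{\boldsymbol{\gamma}} S(\hat M)$ is normed by $\sum_{(i,j)\in J_{\boldsymbol{\gamma}}}|F_{i,j}(\cdot)|$ (as can be read off the Newtonian representation \eqref{eq:newton2D}), and the preceding case analysis shows that the divided differences admit integral representations $F_{i,j}^{\boldsymbol{\gamma}}\in (H^{4-|\boldsymbol{\gamma}|}(\Lambda))'$ satisfying \eqref{eq:interpolinvar}. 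This yields the abstract estimate \eqref{eq:estimPi}.

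Next I would deal with the residual polynomial term $\|\boldsymbol{D}^{\boldsymbol{\gamma}}(q-\hat\Pi q)\|_{2-|\boldsymbol{\gamma}|}$. Writing $q\in (\boldsymbol{Q}+\boldsymbol{\gamma})(\Lambda)$ as a sum of monomials, I would split the sum according to whether $|\boldsymbol{\alpha}+\boldsymbol{\gamma}|<3$ or $|\boldsymbol{\alpha}+\boldsymbol{\gamma}|=3$. A direct verification (analogous to the computation \eqref{eq:invarprop}) gives $\hat\Pi(\boldsymbol{X}^{\boldsymbol{\beta}})=\boldsymbol{X}^{\boldsymbol{\beta}}$ whenever $|\boldsymbol{\beta}|<3$, so the low-degree part of $q$ drops out. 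For the top-degree monomials I would simply use the triangle inequality together with the fact that each $\|\boldsymbol{D}^{\boldsymbol{\gamma}}(\boldsymbol{X}^{\boldsymbol{\alpha}+\boldsymbol{\gamma}}-\hat\Pi(\boldsymbol{X}^{\boldsymbol{\alpha}+\boldsymbol{\gamma}}))\|_{2-|\boldsymbol{\gamma}|}$ is a fixed constant depending only on the reference geometry.

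It then remains to control the coefficients $q_{\boldsymbol{\alpha}+\boldsymbol{\gamma}}$ attached to the top-degree monomials. Here I would exploit the defining orthogonality \eqref{eq:polyq} for $q$: for any multi-index $\boldsymbol{\alpha}\in\boldsymbol{Q}$ with $|\boldsymbol{\alpha}+\boldsymbol{\gamma}|=3$, applying $\boldsymbol{D}^{\boldsymbol{\alpha}+\boldsymbol{\gamma}}$ annihilates every monomial of strictly lower degree, so
\begin{gather*}
\int_{\Lambda}\boldsymbol{D}^{\boldsymbol{\alpha}+\boldsymbol{\gamma}}q\,\D x\D y = 4\,(\alpha_1+\gamma_1)!\,(\alpha_2+\gamma_2)!\,q_{\boldsymbol{\alpha}+\boldsymbol{\gamma}},
\end{gather*}
and \eqref{eq:polyq} then identifies the right-hand side with $\int_{\Lambda}\boldsymbol{D}^{\boldsymbol{\alpha}}\boldsymbol{D}^{\boldsymbol{\gamma}}u\,\D x\D y$ up to a constant, which is exactly \eqref{eq:polypartPi}.

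Combining \eqref{eq:estimPi} with the bound \eqref{eq:polypartPi} yields the claimed estimate. The main obstacle is not in this last assembly step but rather in the preparation that precedes the theorem, namely in finding associated functionals defined on $\boldsymbol{D}^{\boldsymbol{\gamma}} S(\hat M)$ (a space of merely piecewise polynomials, so the Apel--Dobrowolski argument based on $\hat L_1$ used in \cite{CYM2009} cannot be transplanted); the insight is to reinterpret the divided differences $F_{i,j}$ via the Peano form \eqref{eq:peanoform} so that they act on $\boldsymbol{D}^{\boldsymbol{\gamma}} u$ and remain continuous on $H^{4-|\boldsymbol{\gamma}|}(\Lambda)$ by a trace theorem or Sobolev embedding.
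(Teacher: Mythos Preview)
Your proposal is correct and follows essentially the same route as the paper: the theorem is obtained by combining the abstract estimate \eqref{eq:estimPi} (an application of Lemma~\ref{lem:lemma3} with $\boldsymbol{Q}=\{\boldsymbol{\alpha}:|\boldsymbol{\alpha}|\le 3-|\boldsymbol{\gamma}|\}$ and the divided-difference functionals $F_{i,j}^{\boldsymbol{\gamma}}$) with the bound \eqref{eq:polypartPi} on the polynomial residual, which in turn rests on $\hat\Pi(\boldsymbol{X}^{\boldsymbol{\beta}})=\boldsymbol{X}^{\boldsymbol{\beta}}$ for $|\boldsymbol{\beta}|<3$ and on reading off the top-degree coefficients of $q$ from \eqref{eq:polyq}. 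One small wording issue: when you say ``$\boldsymbol{D}^{\boldsymbol{\alpha}+\boldsymbol{\gamma}}$ annihilates every monomial of strictly lower degree'', what you actually use (and what the paper uses) is the componentwise fact that $\boldsymbol{D}^{\boldsymbol{\alpha}+\boldsymbol{\gamma}}\boldsymbol{X}^{\boldsymbol{\beta}+\boldsymbol{\gamma}}=0$ unless $\boldsymbol{\beta}=\boldsymbol{\alpha}$, which also kills the other top-degree monomials; the conclusion is unaffected.
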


Using an affine mapping we can define $\Pi$ on a macro-element $M \in \mathcal{M}$ and extend the result like in the proof of Theorem \ref{thm:Pired}.

\begin{cor}\label{cor:respi} Let $M = [x_0-h_1,x_0+h_1]\times[y_0-h_2,y_0+h_2]$ be the axis-aligned macro-element that contains the four elements sharing the vertex $(x_0,y_0)$. With the reference mapping $F_M$ from $\Lambda \coloneqq[-1,1]^2$ to $M$ defined in \eqref{eq:mapping} one can introduce the full $C^1-Q_2$ interpolation operator $\Pi$ on $M$ by $\Pi u \coloneqq \hat \Pi \hat u\circ F_M^{-1}$ with $\hat u \coloneqq u \circ F_M$.
	Let $\boldsymbol{\gamma}$ be a multi-index with $|\boldsymbol{\gamma}| \le 2$ and $\boldsymbol{h}=(h_1,h_2)$. Then for $u \in H^4(M)$ we have the estimate
	\begin{gather}\label{eq:estimPiM}
		\|\boldsymbol{D}^{\boldsymbol{\gamma}} \left( u - \Pi u \right)\|_{0,M} \le C \left( \sum_{|\boldsymbol{\alpha}| = 4-|\boldsymbol{\gamma}|} \!\!\boldsymbol{h}^{\boldsymbol{\alpha}} \left\| \boldsymbol{D}^{\boldsymbol{\alpha}} \boldsymbol{D}^{\boldsymbol{\gamma}} u \right\|_{0,M} +\! \sum_{|\boldsymbol{\alpha}|=3-|\boldsymbol{\gamma}|} \!\!\boldsymbol{h}^{\boldsymbol{\alpha}} \left| \int_{M} \boldsymbol{D}^{\boldsymbol{\alpha}} \boldsymbol{D}^{\boldsymbol{\gamma}} u(x,y) \,\D x\D y \right| \right).
	\end{gather}
\end{cor}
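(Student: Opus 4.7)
The proof is a standard affine-scaling argument, exactly parallel to the proof of Theorem~\ref{thm:Pired}. Since $F_M$ in \eqref{eq:mapping} is diagonal, the chain rule yields the clean identity $\boldsymbol{\hat D}^{\boldsymbol\alpha}\hat u = \boldsymbol h^{\boldsymbol\alpha}(\boldsymbol D^{\boldsymbol\alpha} u)\circ F_M$ for every multi-index $\boldsymbol\alpha$; combined with the definition $\Pi u = \hat\Pi\hat u\circ F_M^{-1}$, the same identity holds with $u$ replaced by $u-\Pi u$, which is exactly the affine-equivalence observation already made in the first remark after Theorem~\ref{thm:Pired}. This is what makes it legitimate to handle the piecewise-$Q_2$ object $\Pi u$ through a reference-element argument.

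The plan has three steps. First, change variables $(x,y)=F_M(\hat x,\hat y)$ to rewrite $\|\boldsymbol D^{\boldsymbol\gamma}(u-\Pi u)\|_{0,M}^2 = h_1 h_2\,\boldsymbol h^{-2\boldsymbol\gamma}\,\|\boldsymbol{\hat D}^{\boldsymbol\gamma}(\hat u-\hat\Pi\hat u)\|_{0,\Lambda}^2$. Second, apply Theorem~\ref{thm:approxPi} to $\hat u\in H^4(\Lambda)$; this bounds the reference norm by the seminorm $|\boldsymbol{\hat D}^{\boldsymbol\gamma}\hat u|_{4-|\boldsymbol\gamma|}$ (which is a sum of $\|\boldsymbol{\hat D}^{\boldsymbol\alpha+\boldsymbol\gamma}\hat u\|_{0,\Lambda}$ over $|\boldsymbol\alpha|=4-|\boldsymbol\gamma|$) plus the integral quantities $\bigl|\int_\Lambda\boldsymbol{\hat D}^{\boldsymbol\alpha+\boldsymbol\gamma}\hat u\,\D\hat x\,\D\hat y\bigr|$ over $|\boldsymbol\alpha|=3-|\boldsymbol\gamma|$. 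Third, push each reference quantity back to $M$ via the same substitution: the $L_2$-norms transform as $\|\boldsymbol{\hat D}^{\boldsymbol\alpha+\boldsymbol\gamma}\hat u\|_{0,\Lambda}^2=(h_1h_2)^{-1}\boldsymbol h^{2(\boldsymbol\alpha+\boldsymbol\gamma)}\|\boldsymbol D^{\boldsymbol\alpha+\boldsymbol\gamma}u\|_{0,M}^2$, while the integrals transform as $\int_\Lambda\boldsymbol{\hat D}^{\boldsymbol\alpha+\boldsymbol\gamma}\hat u\,\D\hat x\,\D\hat y = \boldsymbol h^{\boldsymbol\alpha+\boldsymbol\gamma}(h_1h_2)^{-1}\int_M\boldsymbol D^{\boldsymbol\alpha+\boldsymbol\gamma}u\,\D x\,\D y$.

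Combining the three steps and taking a square root, the product $\boldsymbol h^{-\boldsymbol\gamma}\cdot\boldsymbol h^{\boldsymbol\alpha+\boldsymbol\gamma}$ collapses to $\boldsymbol h^{\boldsymbol\alpha}$ in both sums, and the Jacobian factors cancel as in the proof of Theorem~\ref{thm:Pired}, producing \eqref{eq:estimPiM}. The only ``obstacle'' is pure bookkeeping: one has to handle the sum-of-squares inside the reference seminorm and the linear sum of absolute integrals consistently, tracking the two different powers of the Jacobian $h_1h_2$ that are picked up by reference norms versus reference integrals. No conceptual novelty is needed beyond what was already used for the reduced operator; only the higher derivative orders $|\boldsymbol\gamma|\le 2$ and $|\boldsymbol\alpha|\le 4-|\boldsymbol\gamma|$ must be accommodated, and the diagonal form of $F_M$ emphasized earlier is what keeps the $Q_2$-macro structure affinely equivalent throughout.
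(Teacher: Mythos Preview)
Your proposal is correct and matches the paper's approach exactly: the paper does not spell out a proof of Corollary~\ref{cor:respi} but simply says to ``extend the result like in the proof of Theorem~\ref{thm:Pired}'' via the affine reference mapping, and your three-step scaling argument (pull to $\Lambda$, apply Theorem~\ref{thm:approxPi}, push back to $M$ using $\boldsymbol{\hat D}^{\boldsymbol\alpha}=\boldsymbol h^{\boldsymbol\alpha}\boldsymbol D^{\boldsymbol\alpha}$) is precisely that.
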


\begin{cor}\label{cor:bicubic}
	Let $I_{12}$ be the analogue of $\Pi$ in the space of bicubic polynomials, i.e.~the Lagrangian basis functions in \eqref{eq:interpolant} are replaced by bicubic polynomials satisfying the same duality and Kronecker relations. Let $\boldsymbol{\gamma}$ be a multi-index with $|\boldsymbol{\gamma}| \le 2$. Then the estimate \eqref{eq:estimBFS} holds true for all $u \in C^2(M)$ with $\boldsymbol{D}^{\boldsymbol{\gamma}} u \in H^{4-\boldsymbol{\gamma}}(M)$.
\end{cor}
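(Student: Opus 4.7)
The strategy is to mimic the derivation of Theorem \ref{thm:approxPi} and Corollary \ref{cor:respi}, but with the piecewise $Q_2$ macro spline space $S(\hat M)$ replaced by the single-rectangle bicubic polynomial space $Q_3(\Lambda)$. Such a space fits into the framework of Lemma \ref{lem:lemma3} by regarding the reference macro as consisting of a single element, namely $\Lambda$ itself.

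First I would apply Lemma \ref{lem:lemma3} on $\Lambda = [-1,1]^2$ with $I = \hat I_{12}$, $S(\hat M) = Q_3(\Lambda)$, and $\boldsymbol{Q} = \{\boldsymbol{\alpha} : |\boldsymbol{\alpha}| \le 3 - |\boldsymbol{\gamma}|\}$. As associated functionals I would take the same sixteen two-dimensional divided differences $F_{i,j}^{\boldsymbol{\gamma}}$, $(i,j) \in J_{\boldsymbol{\gamma}}$, that were constructed in Subsection \ref{subsec:c1q2}. Their Peano kernel representation \eqref{eq:peanoform} carries over directly to the cubic Hermite basis (one verifies it on each basis function just as was done in Subsection \ref{subsec:c1q2} for $\hat\varphi_{-1}$), and the bounds $|F_{i,j}^{\boldsymbol{\gamma}}(v)| \le C \|v\|_{4-|\boldsymbol{\gamma}|}$ together with the invariance $F_{i,j}^{\boldsymbol{\gamma}}(\boldsymbol{D}^{\boldsymbol{\gamma}} \hat I_{12} u) = F_{i,j}^{\boldsymbol{\gamma}}(\boldsymbol{D}^{\boldsymbol{\gamma}} u)$ follow unchanged, since each $F_{i,j}(u)$ is a fixed linear combination of the Hermite data at the vertices of $\Lambda$, which $\hat I_{12}$ reproduces by construction. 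A dimension count $\dim \boldsymbol{D}^{\boldsymbol{\gamma}} Q_3(\Lambda) = |J_{\boldsymbol{\gamma}}|$ combined with the bicubic Newton representation then shows that $\sum_{(i,j) \in J_{\boldsymbol{\gamma}}} |F_{i,j}^{\boldsymbol{\gamma}}(\cdot)|$ still defines a norm on $\boldsymbol{D}^{\boldsymbol{\gamma}} Q_3(\Lambda)$.

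The key difference with the piecewise $Q_2$ case appears in the polynomial residual term of \eqref{eq:interpolestim}. The polynomial $q \in (\boldsymbol{Q} + \boldsymbol{\gamma})(\Lambda)$ determined by \eqref{eq:polyq} is a linear combination of monomials $\boldsymbol{X}^{\boldsymbol{\alpha}+\boldsymbol{\gamma}}$ with $|\boldsymbol{\alpha}+\boldsymbol{\gamma}| \le 3$, so in particular each component of $\boldsymbol{\alpha}+\boldsymbol{\gamma}$ is at most three, hence $q \in Q_3(\Lambda)$. Because $\hat I_{12}$ is a projection onto $Q_3(\Lambda)$ (defined by the sixteen unisolvent Hermite functionals on the sixteen-dimensional space $Q_3(\Lambda)$), we have $\hat I_{12} q = q$ and the term $\|\boldsymbol{D}^{\boldsymbol{\gamma}} (q - \hat I_{12} q)\|_{2-|\boldsymbol{\gamma}|}$ vanishes identically. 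This is precisely why the integral contribution present in Theorem \ref{thm:approxPi} does not appear here.

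Combining these ingredients yields the reference estimate $\|\boldsymbol{D}^{\boldsymbol{\gamma}}(u - \hat I_{12} u)\|_0 \le C |\boldsymbol{D}^{\boldsymbol{\gamma}} u|_{4-|\boldsymbol{\gamma}|}$, which is \eqref{eq:resBFS}. A diagonal affine change of variables from $\Lambda$ to $K$ as in the proof of Theorem \ref{thm:Pired}, tracking the scalings $\boldsymbol{D}^{\boldsymbol{\alpha}} = \boldsymbol{h}^{-\boldsymbol{\alpha}} \boldsymbol{\hat D}^{\boldsymbol{\alpha}}$ and the Jacobian, then delivers the weighted seminorm on the right of \eqref{eq:estimBFS}. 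I expect the only mildly delicate step to be the bookkeeping around the Peano kernel identity for the cubic Hermite basis rather than anything of substance; the identity itself is standard and essentially already verified in \cite{CYM2009}.
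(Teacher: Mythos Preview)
Your proposal is correct and follows essentially the same route as the paper: use the bicubic Newton representation and the same divided-difference associated functionals to apply Lemma~\ref{lem:lemma3}, then observe that since $q\in P_3\subset Q_3(\Lambda)$ the projection property $\hat I_{12}q=q$ kills the extra polynomial residual term. One small remark: the Peano identity \eqref{eq:peanoform} holds for cubic polynomials automatically from the Taylor expansion (they are $C^2$), so the basis-by-basis verification needed in Subsection~\ref{subsec:c1q2} for the merely $C^1$ splines is unnecessary here.
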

\begin{proof}
	Replace \eqref{eq:newton2D} by
	\begin{gather*}
		\begin{gathered}
			\big(I_{12} u\big)(x,y) = \sum_{i=1}^3 \sum_{j=1}^3 F_{i,j}(u) (x+1)^{i-1} (y+1)^{j-1} + \sum_{j=1}^3 F_{4,j}(u) (x+1)^2 (x-1) (y+1)^{j-1}\\
	+ \sum_{i=1}^3 F_{i,4}(u) (x+1)^{i-1} (y+1)^2 (y-1) + F_{4,4}(u) (x+1)^2(x-1)(y+1)^2(y-1).
		\end{gathered}
	\end{gather*}
	Now all arguments carry over to $I_{12}$. Observe that for this interpolation operator on the reference element we find
	$I_{12}(\boldsymbol{X}^{\boldsymbol{\beta}}) = \boldsymbol{X}^{\boldsymbol{\beta}}$ for all $\boldsymbol{\beta}$ satisfying $|\boldsymbol{\beta}| \le 3$. Hence, the additional error component containing $q$ vanishes.
\end{proof}

\begin{rem}
	It is possible to extend this result to Hermite interpolation by polynomials of higher degree as was done in \cite{CYM2009}. However, in that paper a different technique is used. By identifying possible associate functionals we enable the analysis of these operators using the unified theory of Apel and Dobrowolski, see \cite{AD92}.
\end{rem}

\begin{rem}
	Comparing the estimates \eqref{eq:estimBFS} and \eqref{eq:estimPiM} we see that the macro-interpolation attains in general a lower order than the corresponding element interpolation. This is due to the inability of the macro-interpolation operator to reproduce cubic polynomials.
\end{rem}

\begin{rem}\label{rem:choicepsi}
	The reduced macro-interpolation operator $\Pi^r$ is of even lower order compared to $\Pi$ and it appears doubtful to obtain anisotropic estimates for second order derivatives of the interpolation error of $\Pi^r$. However, it does not rely on so much regularity of the function $u$ to be interpolated. Note that the only difference of $\hat\Pi^r$ and $\hat\Pi$ is the choice of the functional determining the coefficient of the basis functions $\hat\psi_{i,j}$, $i,j=-1,1$, see also Table \ref{tab:operators}.
\end{rem}

\begin{table}
\label{tab:operators}
\centering
\begin{tabular}{lccc}
\toprule
& $\Pi u$ & $\Pi^r u$ & $\tilde \Pi u$\\
\midrule
\parbox{5.5cm}{coefficient of the basis-function\\
corresponding to mixed derivative} & $u_{xy}(x_i,y_j)$ & 0 & \parbox{3cm}{\centering{}$a_{ij}$, see \eqref{eq:aij},\\ non-local}\\
\midrule
required regularity of $u$ & $H^4(\Omega)$ & $H^3(\Omega)$ & $H^3(\Omega)$\\
\midrule
\parbox{5.5cm}{formal order of the first derivative\\
of the approximation error in $L_2$} & 2 & 1 & 2\\
\midrule
\parbox{6cm}{best possible order of the first deriva-\\
tive of the approximation error in $L_2$} & 3 & 2 & 2\\
\midrule
\parbox{6cm}{anisotropic estimates for the deriva-\\
tives $\boldsymbol{D}^{\boldsymbol{\gamma}}$ of the approximation error} & $|\boldsymbol{\gamma}| \le 2$ & $|\boldsymbol{\gamma}| \le 1$ & $|\boldsymbol{\gamma}| \le 1$\\
\bottomrule
\end{tabular}
\caption{Comparison of the (quasi-)interpolation operators $\Pi^r$ (Subsection \ref{subsec:reduced}), $\Pi$ (Subsection \ref{subsec:c1q2}) and $\tilde \Pi$ (Subsection \ref{subsec:SZ_macro}) on tensor product meshes.}
\end{table}

\subsection[\texorpdfstring{$C^1$ macro-element quasi-interpolation}{C1 macro-element quasi-interpolation}]{\texorpdfstring{A $C^1-Q_2$ macro-element quasi-interpolation operator of Scott-Zhang-type on tensor product meshes}{A C1-Q2 macro-element quasi-interpolation operator of Scott-Zhang-type on tensor product meshes}}\label{subsec:SZ_macro}

Let us start this subsection by recalling the definition of the Scott-Zhang quasi-interpolation operator $Z_h$. This operator was designed in order to obtain approximations to functions $u$ that are not sufficiently regular for nodal interpolation, see \cite{SZ90}. For instance, one might wish to approximate non-smooth functions. The basic idea is to use local $L_2$ projections on certain element edges to specify the coefficients of the approximating finite element function $Z_h u$. In contrast to the well-known Cl\'ement quasi-interpolant this approach can grant the projection property and the ability to preserve homogeneous boundary conditions.

Since we only want to demonstrate the basic ideas and fix some notation here we shall only consider the function space $V_h$ of continuous piecewise linears induced by a quasi-uniform partition $\Omega^h$ of the polygonal domain $\Omega \subset \R^2$ into triangles. For a more extensive presentation we refer the interested reader to \cite[Section 3.2]{A99}.

Let $\varphi_i$, $i \in I$ denote the nodal basis functions of $V_h$, i.e.~for any grid node $\boldsymbol{X}_j$, $j \in I$ the piecewise linear function $\varphi_i \in V_h$ satisfies
\begin{gather}\label{eq:nodalbasis}
	\varphi_i(\boldsymbol{X}_j) = \delta_{ij}.
\end{gather}
Next, for each node $\boldsymbol{X}_i$, $i \in I$ of the mesh we pick an edge $\sigma_i$ of a mesh triangle such that $\boldsymbol{X}_i \in \sigma_i$. If $\boldsymbol{X}_i \in \partial \Omega$ belongs to the boundary then we further restrict the choice of these edges by demanding $\sigma_i \subset \partial \Omega$. This is essential if one wishes to preserve homogeneous boundary conditions. Now the Scott-Zhang operator is defined by
\begin{gather}\label{eq:defZh}
	Z_h u(x,y) = \sum_{i \in I} \big(\Pi_{\sigma_i} u\big)(\boldsymbol{X}_i) \varphi_i(x,y),
\end{gather}
where $\Pi_{\sigma_i}:L_2(\sigma_i) \to P_1(\sigma_i)$, $i \in I$ is the local $L_2$-projection operator. It is easy to see that $Z_h$ inherits the property of being a \emph{projector}; actually, $Z_h v_h = v_h$ for all $v_h \in V_h$.

In order to provide an equivalent but more useful definition of the Scott-Zhang quasi-interpolant $Z_h u$ to $u$ let us assume that $\sigma_i$ is the straight line connecting the nodes $\boldsymbol{X}_i$ and $\boldsymbol{X}_j$ for some $j \in I$. On $\sigma_i$ let $\psi_i^d \in P_1(\sigma_i)$ denote a \emph{dual basis function}, uniquely determined by
\begin{gather}\label{eq:dualityrel}
	\int_{\sigma_i} \psi_i^d \varphi_i \,\D s = 1\qquad \text{and} \qquad  \int_{\sigma_i} \psi_i^d \varphi_j \,\D s = 0.
\end{gather}
Obviously $\Pi_{\sigma_i} u \in P_1(\sigma_i)$ can be represented as a linear combination of the restrictions of $\varphi_i$ and $\varphi_j$ to $\sigma_i$, i.e.
\begin{gather*}
	\Pi_{\sigma_i} u = b_i \varphi_i\big|_{\sigma_i} + b_j \varphi_j\big|_{\sigma_i}.
\end{gather*}
with real numbers $b_i$ and $b_j$ still to be specified. Hence, by \eqref{eq:dualityrel} and the definition of $\Pi_{\sigma_i}$ one finds that
\begin{gather}\label{eq:defbi}
	b_i = b_i \int_{\sigma_i} \psi_i^d \varphi_i \,\D s = \int_{\sigma_i} (\Pi_{\sigma_i} u) \psi_i^d\,\D s = \int_{\sigma_i} u \psi_i^d\,\D s.
\end{gather}
Finally, by the Kronecker relation \eqref{eq:nodalbasis} it is clear that $\big(\Pi_{\sigma_i} u\big)(\boldsymbol{X}_i) = b_i$. Consequently, with \eqref{eq:defbi} and \eqref{eq:defZh} one obtains
\begin{gather}\label{eq:defSZ}
	Z_h u(x,y) = \sum_{i \in I} \int_{\sigma_i} u \psi_i^d\,\D s\, \varphi_i(x,y).
\end{gather}

From its representation \eqref{eq:defSZ} it can be seen that the coefficients of the Scott-Zhang interpolant $Z_h u$ to $u$ are \emph{weighted local averages of $u$} over $\sigma_i$. In fact, the dual basis function $\psi_i^d$ can be interpreted as some weighting function since
\begin{gather*}
	\int_{\sigma_i} \psi_i^d \,\D s = \int_{\sigma_i} \psi_i^d (\varphi_i + \varphi_j) \,\D s = \int_{\sigma_i} \psi_i^d \varphi_i \,\D s = 1,
\end{gather*}
because of \eqref{eq:dualityrel} and the fact that $\{\varphi_i,\varphi_j\}$ is a partition of unity on $\sigma_i$.
In this light it is clear that stability and error estimates for $Z_h u$ over an element $T$ will be based on the values of derivatives of $u$ on an entire patch $\omega_T$ of elements around $T$. More precisely, a mesh triangle $T_j$ is a subset of $\omega_T$ iff $T$ has a vertex $\boldsymbol{X}_i$ such that $\boldsymbol{X}_i \in \sigma_i \subset T_j$.

Moreover, \eqref{eq:defSZ} extends the domain of definition. Naturally one would demand that for the function $u$ to be approximated it holds $u \in L_2(\sigma_i)$. However, since one has $\psi_i^d \in L_\infty(\sigma_i)$ for the polynomial dual basis functions $\psi_i^d$, $i\in I$, it is possible to apply $Z_h$ to any function $u$ such that its trace satisfies $u \in L_1(\sigma_i)$.

Under the assumption of a quasi-uniform mesh $\Omega^h$ and for $u\in W_{j,p}(\omega_T)$ the stability estimate
\begin{gather*}
	|Z_h u|_{W_{k,p}(T)} \le C h^{-k} \sum_{j=0}^\ell h^j |u|_{W_{j,p}(\omega_T)},\qquad \text{for $p\in [1,\infty]$, $0 \le k\le \ell \le 2$, $\ell \ge 1$},
\end{gather*}
can be found for instance in \cite{Dscript}. Next, standard arguments can be used to obtain the error estimate
\begin{gather*}
	|u - Z_h u|_{W_{k,p}(T)} \le C h^{\ell-k} |u|_{W_{\ell,p}(\omega_T)},\qquad \text{for $p\in [1,\infty]$, $0 \le k\le \ell \le 2$, $\ell \ge 1$}.
\end{gather*}

In \cite{A99} the Scott-Zhang operator is studied over anisotropic meshes of tensor product type. It is shown in Theorem 3.1 of that book that for $p\in[1,\infty]$ and some rectangular axis-aligned element $T$ this operator grants a stability estimate and an anisotropic quasi-interpolation error estimate for $\|Z_h u\|_{L_p(T)}$ and $\|u-Z_h u\|_{L_p(T)}$, respectively. Moreover, in \cite{A99} one finds a counterexample showing that in general the original Scott-Zhang operator does not provide such an estimate for derivatives of the approximation error. Therefore the original operator is modified in several ways in the Sections 3.3, 3.4 and 3.4 of \cite{A99} and anisotropic quasi-interpolation error estimates for the resulting operators are obtained. However, in the entire third chapter of that book it is assumed that there is no abrupt change in the element sizes. This means that while elements are allowed to have an arbitrary aspect ratio $h_x/h_y$ the edge length $h_x$ and $h_y$ have to vary gradually when moving from one element to a neighboring one, see \cite[(3.4) on page 100]{A99}. Clearly, this assumption is quite restrictive. For instance, the frequently used Shishkin-type meshes do not meet this requirement.

The paper \cite{AR08} deals with the possibility of applying the Scott-Zhang operator on Shishkin meshes $\Omega^N$ of tensor product type. The authors suggest to choose the element edges $\sigma_i$ for every mesh node $\boldsymbol{X}_i \in \sigma_i$, $i =1,\dots,N^2$ in a special way:
\begin{itemize}
 \item Certain edges $\sigma_i$ on the boundary may be chosen arbitrarily but the rest has to be parallel to one coordinate axis, say the $x$-axis.
 \item The ratio of the size of the patch $\omega_T$ to the size of the element $T$ must have an $\eps$-uniform upper bound in both coordinate directions. Consequently, for instance an element $T$ with a small side in the $x$-direction must be associated with a patch $\omega_T$ with the same property.
\end{itemize}
This modified Scott-Zhang operator $Q_N$ can be applied on a Shishkin mesh. Unfortunately the authors needed more  regularity of the regular solution component $S \in W_{2,\infty}(\Omega)$ of a convection-diffusion problem to prove optimal quasi-interpolation error estimates. Still, this result shows that the Scott-Zhang operator is quite flexible and that it can be tailored to suit an application on meshes with abrupt changes in the mesh sizes.

Note that the original Scott-Zhang operator and its modifications sketched so far were introduced for elements of Lagrange-type, i.e.~the linear functionals associated with the element are function evaluations in certain points.
The $C^1-Q_2$ macro-element however features also the point evaluation of derivatives. We want to apply the basic ideas of the Scott-Zhang operator to the components of the $C^1-Q_2$ macro-element space that are associated with the evaluation of the mixed second derivative. We do so with the aim of reducing the regularity required to prove anisotropic quasi-interpolation error estimates. In view of Remark \ref{rem:choicepsi} we study the question, whether it is possible to define a new $C^1$ interpolation operator $\tilde \Pi$ by introducing the right functional corresponding to the mixed derivative in such a way that estimates like \eqref{eq:estimPiM} are possible assuming only some $W_{3,p}$ regularity of $u$.

The quasi-interpolant $\tilde \Pi u$ to $u$ over some macro-element $M$ will be governed on an macro-element neighbourhood or macro-element patch around $M$. More precisely, the coefficients of the basis functions that correspond to the mixed derivative are calculated by some weighted averaging process of the mixed derivative of $u$ over macro-element edges that do not necessarily belong to $M$. Because of this non-local character of $\tilde \Pi$ we have to be very careful when a reference mapping to some reference domain is used to prevent imposing very restrictive conditions on the geometry of the macro-element patch. Instead we shall use some ideas of \cite{A99} and estimate directly on the world domain.

Let $\boldsymbol{X}_{ij}\coloneqq (x_i,y_j)$, $(i,j) \in I$ denote the nodes of a rectangular tensor product mesh $\mathcal{M}_{\boldsymbol{h}}$, generated by the two arbitrary one-dimensional triangulations $\{x_i\}_{i=0}^n$ and $\{y_j\}_{j=0}^m$. We shall refer to $\mathcal{M}_{\boldsymbol{h}}$ as \emph{macro-element mesh}. We use
\begin{gather*}
h_i \coloneqq \frac{1}{2}(x_i-x_{i-1}),\quad i=1,\dots,n\qquad\text{and} \qquad k_j \coloneqq \frac{1}{2}(y_j-y_{j-1}),\quad j=1,\dots,m,
\end{gather*}
to denote the local step sizes in $x$- and $y$-direction. Each macro-element $M \in \mathcal{M}_{\boldsymbol{h}}$ is subdivided into four congruent elements introducing new mesh nodes with subscript $\frac{1}{2}, \frac{3}{2}, \frac{5}{2}, \dots$. The generated \emph{element mesh} is denoted by $\mathcal{T}_{\boldsymbol{h}}$, see Figure \ref{fig:macro-element}. Note that one may chose a different refinement of the macro-element mesh such that the elements within one macro-element remain comparable in size. We choose the presented uniform one in order to simplify the presentation. Now each macro-element $M_{ij} \coloneqq [x_{i+1/2}-h_i,x_{i+1/2}+h_i]\times[y_{j+1/2}-k_j,y_{j+1/2}+k_j] \in \mathcal{M}_{\boldsymbol{h}}$ is centered around $(x_{i+1/2},y_{j+1/2})$ and consists of four elements of size $\boldsymbol{h}_{ij} \coloneqq (h_i,k_j)$.
Moreover, we denote by $I_{ij} \coloneqq I_{M_{ij}} \coloneqq \{(i,j),(i,j+1),(i+1,j),(i+1,j+1)\}$ the set of the four node indices that are vertices of $M_{ij}$.

Let $V_{\boldsymbol{h}}$ denote the space of $C^1-Q_2$ finite element functions over the tensor product mesh $\mathcal{T}_{\boldsymbol{h}}$. Using the reference mapping $F_{ij}:[-1,1]^2 \to M_{ij} \in \mathcal{M}_{\boldsymbol{h}}$ with
\begin{gather*}
	x = x_{i+1/2} + h_i \hat x,\qquad \text{and} \qquad y = y_{j+1/2} + k_j \hat y
\end{gather*}
we can specify basis functions of $V_{\boldsymbol{h}}$ in the world domain using \eqref{eq:basis2d}. Consider for instance the lower right vertex $\boldsymbol{X}_{i+1,j}$ of the macro-element $M_{ij}$. Then the basis function $\psi_{i+1,j}$ associated with the mixed derivative in $\boldsymbol{X}_{i+1,j}$ admits the representation
\begin{gather*}
	\psi_{i+1,j}\big|_{M_{ij}} = h_i k_j \hat \psi_{1,-1} \circ F_{ij}^{-1},
\end{gather*}
where $\hat \psi_{1,-1}$ was defined in \eqref{eq:basis2d}. Similarly,
\begin{gather*}
	\psi_{i,j}\big|_{M_{ij}} = h_i k_j \hat \psi_{-1,-1} \circ F_{ij}^{-1},\quad \!\psi_{i,j+1}\big|_{M_{ij}} = h_i k_j \hat \psi_{-1,1} \circ F_{ij}^{-1},\quad \!\psi_{i+1,j+1}\big|_{M_{ij}} = h_i k_j \hat \psi_{1,1} \circ F_{ij}^{-1}\!.
\end{gather*}

Let us now define a (quasi-)interpolation operator $\tilde \Pi$ by
\begin{gather}\label{eq:tildepi}
	\tilde \Pi u \big|_{M} \coloneqq \Pi^r \left(u\big|_{M} \right) + \sum_{(k,\ell) \in I_{M}} a_{k,\ell} \,\psi_{k,\ell},
\end{gather}
with the reduced interpolation operator $\Pi^r$ from Subsection \ref{subsec:reduced} and real numbers $a_{k,\ell}$ still to be determined. Note that the choice of $a_{k,\ell}$ does not alter the ability of $\tilde \Pi$ to reproduce inhomogeneous Dirichlet boundary conditions $g$ (if $g \in \left.V_h\right|_{\partial \Omega}, i.e.~g\in C^1(\partial \Omega)$ and piecewise quadratic), because $\psi_{k,\ell}$, $(k,\ell) \in I_{ij}$ vanishes on the boundary of $M_{ij}$, see Figure \ref{fig:basis}.

The local choices $a_{k,\ell} = 0$ and $a_{k,\ell} = u_{xy}(\boldsymbol{X}_{k\ell})$ correspond to $\tilde \Pi = \Pi^r$ and $\tilde \Pi = \Pi$, respectively. Next, we want to follow the approach of Scott and Zhang \cite{SZ90} and define the coefficients $a_{k,\ell}$ using certain mean values of $u_{xy}$ along macro-element edges $\sigma_{k,\ell}$. Hence, as already mentioned, the interpolation operator is of non-local character and the theory developed in Section \ref{sec:macrotheory} can not be applied to $\tilde \Pi$. However, the definition of $\tilde \Pi u$ on a macro-element $M$ is not global but shall be based on the values of $u_{xy}$ on the \emph{macro-element neighbourhood} $S_M$ of $M$:
\begin{gather}\label{eq:macroelemSM}
	S_M = \bigcup \{ M'\,:\, M' \in \mathcal{M}_{\boldsymbol{h}},\, M' \cap M \neq \emptyset \}.
\end{gather}
More precisely, we associate every node $\boldsymbol{X}_{ij}$ of the macro-element mesh $\mathcal{M}_{\boldsymbol{h}}$ with a macro-element edge $\sigma_{i,j} \subset S_M$ such that $\boldsymbol{X}_{ij} \in \sigma_{i,j}$, see Figure \ref{fig:neighbor-sigma} for some illustration.

\begin{figure}
	\centering
	\includegraphics[height=6.5cm]{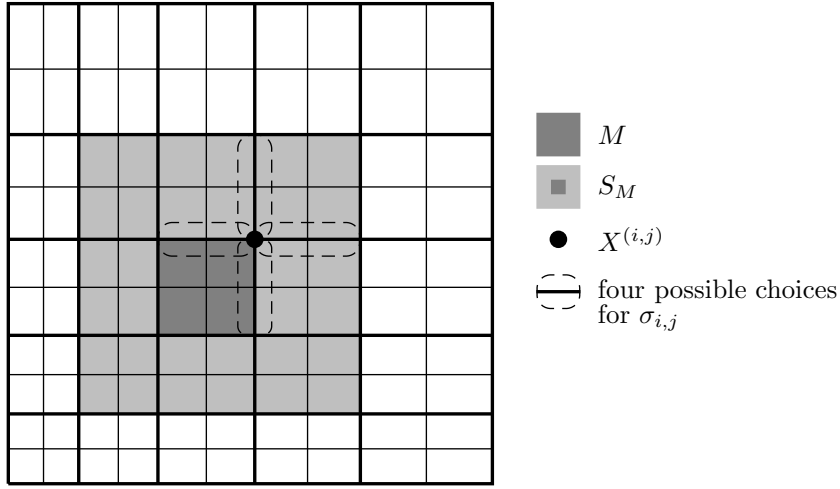}
	\caption{Definitions on the macro-element mesh.}
	\label{fig:neighbor-sigma}
\end{figure}

Once the edges $\sigma_{i,j} \in S_M$ for $(i,j) \in I_M$ are chosen we can define the \emph{associated macro-element patch} $\omega_M$ around $M$. Another patch neighbourhood of $M$ is needed because the value of our quasi-interpolation operator will be based on values of its interpolant on $M$ and $\sigma_{i,j}$. Hence, if the approximation error is estimated norms of the interpolant over a patch of macro-elements will appear on the right hand side of the estimate. On the other hand estimates that use the full neighbourhood $S_M$ might be too crude.

\begin{defn}\label{defn:rm}
The smallest (in area) rectangular patch of macro-elements that contains the convex hull of $\{\sigma_{i,j}\,:\, (i,j)\in I_M\}$ is called the \emph{associated macro-element patch} $\omega_M$ around $M$.
\end{defn}

Note that $M \subset \omega_M \subset S_M$. If for instance at each node $\boldsymbol{X}_{ij}$ of the tensor product macro-element mesh the set $\sigma_{i,j}$ is chosen to be the edge to the left of that point, then the associated macro-element patch $\omega_M$ around $M$ is defined as the union of $M$ and its left macro-element neighbour.

We plan to set 
\begin{gather}\label{eq:aij}
	a_{i,j} = \frac{\partial^2}{\partial x\partial y} \left( \Pi_{\sigma_{i,j}} u \right)(\boldsymbol{X}_{ij}),
\end{gather}
with a suitable projector $\Pi_{\sigma_{i,j}}$. Assuming that $\sigma_{i,j}$ is the horizontal macro-element edge $(x_i,x_{i+1})\times \{y_j\}$ that connects the macro-element vertices $\boldsymbol{X}_{ij}$ and $\boldsymbol{X}_{i+1,j}$ we set 
\begin{gather}\label{eq:ansatzpisigma}
	\Pi_{\sigma_{i,j}} u = b_{i,j} \psi_{i,j} + b_{i+1,j} \psi_{i+1,j}.
\end{gather}
We determine the real coefficients $b_{i,j}$ and $b_{i+1,j}$ by
\begin{gather}\label{eq:pisigma}
	\int_{\sigma_{i,j}} \frac{\partial^2}{\partial x\partial y} \left( \Pi_{\sigma_{i,j}} u \right) (x,y_j) v(x) \,\D x =
	\int_{\sigma_{i,j}} \frac{\partial^2 u(x,y_j)}{\partial x\partial y} v(x) \,\D x\qquad \text{for all $v \in \mathbb{V}_{i+1/2}$},
\end{gather}
with $\mathbb{V}_{i+1/2} = \spn \{ \psi_i+\psi_{i+1}, \theta_{i+1/2}\}$ and $\theta_{i+1/2}(x) = \left(\frac{x-x_{i+1}}{h_{i}}\right)^2-\frac{1}{6}$.
Here $\psi_i$ and $\psi_{i+1}$ are the one dimensional spline basis functions from \eqref{eq:basis} scaled to $\sigma_{i,j}$, i.e.
\begin{gather}\label{eq:worldbasis}
\begin{alignedat}{2}
	\psi_{i} (x) &= \frac{h_{i}}{4} - \frac{x-x_{i+1/2}}{2} \left\{ \begin{alignedat}{2} &-\frac{3(x-x_{i+1/2})^2}{4 h_{i}},&\quad x_i &\le x \le x_{i+1/2}, \\ &+\frac{(x-x_{i+1/2})^2}{4 h_{i}},&\quad x_{i+1/2} &\le x \le x_{i+1}, \end{alignedat} \right.\qquad &\\
	\psi_{i+1} (x) &= -\frac{h_{i}}{4} - \frac{x-x_{i+1/2}}{2} \left\{ \begin{alignedat}{2} &-\frac{(x-x_{i+1/2})^2}{4 h_{i}},&\quad x_i &\le x \le x_{i+1/2}, \\ &+\frac{3(x-x_{i+1/2})^2}{4 h_{i}},&\quad x_{i+1/2} &\le x \le x_{i+1} \end{alignedat} \right.
\end{alignedat}
\end{gather}
and $h_{i} = x_{i+1}-x_{i+1/2} = x_{i+1/2}-x_{i}$.
Note that these functions have $\Landau(h_{i})$ scalings while their first derivatives have $\Landau(1)$ scalings on $(x_i,x_{i+1})$. Moreover, we would like to recall $\psi_{i,j}(x,y) = \psi_i(x) \psi_j(y)$ and point out that the ansatz \eqref{eq:ansatzpisigma} is justified by the fact that $\frac{\partial^2}{\partial x\partial y} \psi_{k,\ell}$ vanishes on $\sigma_{i,j}$ if $\boldsymbol{X}_{k,\ell} \notin \sigma_{i,j}$, i.e.~only adjacent basis functions contribute to the integral over $\sigma_{i,j}$ on the left hand side of \eqref{eq:pisigma}. The choice of $\mathbb{V}_{i+1}$ will become clear in the next Lemma. Basically, we need that this space is $L_2$-orthogonal to certain functions to prove that discrete functions are left invariant, see Lemma \ref{lem:vhpreservation}.

Next, we want to find a more suitable representation of $a_{i,j}$ according to \eqref{eq:aij}. For this purpose let us define the dual basis function $\psi_{i}^d \in \mathbb{V}_{i+1/2}$ by
\begin{gather}\label{eq:dualbasis}
	\int_{\sigma_{i,j}} \frac{\partial^2 \psi_{k,j}(x,y_j)}{\partial x\partial y} \psi_{i}^d(x) \,\D x = \delta_{k,i}.
\end{gather}
This system yields with \eqref{eq:ansatzpisigma}
\begin{gather*}
	b_{i,j} = \sum_{k=i}^{i+1} b_{k,j} \int_{\sigma_{i,j}} \frac{\partial^2 \psi_{k,j}(x,y_j)}{\partial x\partial y} \psi_{i}^d(x) \,\D x = \int_{\sigma_{i,j}} \frac{\partial^2}{\partial x\partial y} \left( \Pi_{\sigma_{i,j}} u \right) (x,y_j) \psi_{i}^d(x) \,\D x.
\end{gather*}
An application of \eqref{eq:pisigma} then gives
\begin{gather*}
	b_{i,j} = \int_{\sigma_{i,j}} \frac{\partial^2 u (x,y_j)}{\partial x\partial y} \psi_{i}^d(x) \,\D x.
\end{gather*}
Finally, we use the Lagrange relation $\frac{\partial^2 \psi_{k,\ell}(x_i,y_j)}{\partial x\partial y} = \delta_{k,i} \delta_{\ell,j}$ to obtain
\begin{gather}\label{eq:formulaaij}
	a_{i,j} = \frac{\partial^2}{\partial x\partial y} \left( \Pi_{\sigma_{i,j}} u \right)(\boldsymbol{X}_{ij}) = b_{i,j} = \int_{\sigma_{i,j}} \frac{\partial^2 u (x,y_j)}{\partial x\partial y} \psi_{i}^d(x) \,\D x.
\end{gather}
Hence, $a_{i,j}$ is indeed a weighted mean value of $u_{xy}$ on the macro-element edge $\sigma_{i,j}$. For the weighting function we solve \eqref{eq:dualbasis} to find 
\begin{gather}\label{eq:duallocal}
\begin{alignedat}{2}
	\psi_{i}^d (x) &= - \frac{h_{i}^2 + 12 h_{i} (x-x_{i+1/2})}{2 h_{i}^3} \left\{ \begin{alignedat}{2} & - \frac{3 (x-x_{i+1/2})^2}{h_{i}^3},&\quad x_i &\le x \le x_{i+1/2}, \\ & + \frac{9 (x-x_{i+1/2})^2}{h_{i}^3},&\quad x_{i+1/2} &\le x \le x_{i+1}, \end{alignedat} \right.\qquad &\\
	\psi_{i+1}^d (x) &= - \frac{h_{i}^2 -12 h_{i} (x-x_{i+1/2})}{2 h_{i}^3} \left\{ \begin{alignedat}{2} & +\frac {9 (x-x_{i+1/2})^2}{h_{i}^3},&\quad x_i &\le x \le x_{i+1/2}, \\ & -\frac{3 (x-x_{i+1/2})^2}{h_{i}^3},&\quad x_{i+1/2} &\le x \le x_{i+1}. \end{alignedat} \right.
\end{alignedat}
\end{gather}
Here $\psi_{i+1}^d \in \mathbb{V}_{i+1/2}$ is the other dual basis function on $\sigma_{i,j}$, satisfying \eqref{eq:dualbasis} with $i$ replaced by $i+1$. Note that $\psi_i^d,\psi_{i+1}^d \in C^1\big(x_i,x_{i+1}\big)$ and that $\|\psi_i^d\|_{L_\infty(\sigma_{i,j})} \le C h_{i}^{-1}$ with a similar bound for $\|\psi_{i+1}^d\|_{L_\infty(\sigma_{i,j})}$. A simple calculation shows the important property
\begin{gather}\label{eq:proppsid}
	\int_{\sigma_{i,j}} \psi_i^d(x) \,\D x = 1,
\end{gather}
which again underlines the role of $\psi_i^d$ as a weighting function.
\begin{rem}
	In Section 4 of \cite{HHZ2011} a similar macro-element edge based approach is used to reduce the regularity demanded of the function to be interpolated. There, the Girault-Scott operator is extended to the $C^1-Q_2$ macro-element. In \cite{HHZ2011} integration by parts is applied to an identity similar to \eqref{eq:formulaaij} which results in a different system defining the dual basis functions. However, this approach appears to be not suitable for anisotropic quasi-interpolation error estimates. Another difference to that paper is that here we mix local and non-local functionals for the definition of our quasi-interpolation operator which is reflected in the sophisticated choice of $\mathbb{V}_{i+1/2}$.
\end{rem}
\begin{lem}\label{lem:vhpreservation}
	$\tilde \Pi$ preserves $V_{\boldsymbol{h}}$ functions, i.e.
	\begin{gather}\label{eq:vhpreservation}
		\tilde \Pi v_{\boldsymbol{h}} = v_{\boldsymbol{h}}\qquad \text{for all $v_{\boldsymbol{h}} \in V_{\boldsymbol{h}}$.}
	\end{gather}
\end{lem}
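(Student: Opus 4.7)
The strategy is to reduce the claim $\tilde\Pi v_{\boldsymbol{h}} = v_{\boldsymbol{h}}$ to the identity $a_{k,\ell}(v_{\boldsymbol{h}}) = \partial_{xy}^2 v_{\boldsymbol{h}}(\boldsymbol{X}_{k\ell})$ valid for every macro-node $\boldsymbol{X}_{k\ell}$ and every $v_{\boldsymbol{h}} \in V_{\boldsymbol{h}}$. Once this key identity is established, the Lagrange relations built into the basis \eqref{eq:basis2d} give $\Pi v_{\boldsymbol{h}}|_M = v_{\boldsymbol{h}}|_M$ on every macro-element $M$; comparing \eqref{eq:interpolant} with \eqref{eq:redinterpolant} yields the decomposition $\Pi v_{\boldsymbol{h}}|_M = \Pi^r v_{\boldsymbol{h}}|_M + \sum_{(k,\ell)\in I_M} \partial_{xy}^2 v_{\boldsymbol{h}}(\boldsymbol{X}_{k\ell})\,\psi_{k,\ell}$, so combining with \eqref{eq:tildepi} delivers $\tilde\Pi v_{\boldsymbol{h}}|_M = v_{\boldsymbol{h}}|_M$.

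To prove the key identity, fix a horizontal macro-edge $\sigma_{k,\ell} = (x_k,x_{k+1}) \times \{y_\ell\}$; the case of vertical edges is analogous after swapping the roles of $x$ and $y$. The crucial structural observation is that the trace $g(x) := \partial_{xy}^2 v_{\boldsymbol{h}}(x,y_\ell)$ is a \emph{continuous} piecewise-linear function on $[x_k,x_{k+1}]$ with the single breakpoint $x_{k+1/2}$. Linearity of each piece is immediate from $v_{\boldsymbol{h}}|_T \in Q_2(T)$; continuity at $x_{k+1/2}$ follows by differentiating in $y$ the continuity of $\partial_x v_{\boldsymbol{h}}$ across the element edge $\{x_{k+1/2}\} \times \R$, which itself is guaranteed by $v_{\boldsymbol{h}} \in C^1$. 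Using the Kronecker-type relations $\psi_m'(x_n) = \delta_{m,n}$ for $m,n \in \{k,k+1\}$, this continuous piecewise-linear function admits the decomposition
\[ g = g(x_k)\,\psi_k' + g(x_{k+1})\,\psi_{k+1}' + c\,\eta, \]
where $\eta$ is the hat function with apex $1$ at $x_{k+1/2}$ and zeros at $x_k,x_{k+1}$, and $c \in \R$ is the residual coefficient capturing the possible kink of $g$.

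Inserting this decomposition into the representation \eqref{eq:formulaaij} and invoking the duality \eqref{eq:dualbasis} reduces the desired identity $a_{k,\ell} = g(x_k) = \partial_{xy}^2 v_{\boldsymbol{h}}(\boldsymbol{X}_{k\ell})$ to the single scalar orthogonality $\int_{\sigma_{k,\ell}} \eta\,\psi_k^d\,\D x = 0$. Since $\psi_k^d \in \mathbb{V}_{k+1/2} = \spn\{\psi_k+\psi_{k+1},\,\theta_{k+1/2}\}$, it suffices to verify the two elementary orthogonalities $\int_{\sigma_{k,\ell}} \eta\,(\psi_k+\psi_{k+1})\,\D x = 0$ and $\int_{\sigma_{k,\ell}} \eta\,\theta_{k+1/2}\,\D x = 0$.

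The first is free: $\eta$ is even about $x_{k+1/2}$ while the identity $\hat\psi_{-1}(x) = -\hat\psi_{1}(-x)$ recorded in Section \ref{sec:univariate} makes $\psi_k + \psi_{k+1}$ odd about the same point. The second reduces to the one-line computation $2\int_0^1 (1-\hat x)(\hat x^2 - \tfrac{1}{6})\,\D\hat x = 0$, and the precise constant $-\tfrac{1}{6}$ appearing in $\theta_{k+1/2}$ is exactly what ensures the vanishing of this integral. I expect this last step to be the main (and in fact only) subtle point of the proof, as it reveals that the apparently ad hoc test space $\mathbb{V}_{k+1/2}$ chosen just after \eqref{eq:pisigma} has been tailored precisely to guarantee the discrete invariance $\tilde\Pi v_{\boldsymbol{h}} = v_{\boldsymbol{h}}$.
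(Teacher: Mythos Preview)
Your proof is correct and follows essentially the same route as the paper's. The paper expands $v_{\boldsymbol{h}}$ in the $C^1$--$Q_2$ basis on the adjacent macro-element and observes that only the $\chi_{\ell,j}$ and $\psi_{\ell,j}$ terms contribute to $\partial_{xy}^2 v_{\boldsymbol{h}}|_{\sigma_{i,j}}$, yielding the same decomposition you obtain abstractly: the $\psi_\ell'$ pieces plus a multiple of $\varphi_\ell'$, which is precisely (a scalar multiple of) your hat function $\eta$ since $\varphi_\ell'$ is continuous piecewise linear and vanishes at the macro-endpoints. Both arguments then finish with the identical orthogonality $\eta\perp\mathbb{V}_{k+1/2}$, verified by parity for $\psi_k+\psi_{k+1}$ and by the explicit choice of the constant $-\tfrac16$ in $\theta_{k+1/2}$ for the second generator.
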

\begin{proof}
	Since every function $v_{\boldsymbol{h}} \in V_{\boldsymbol{h}}$ is uniquely determined by the nodal values
	\begin{gather*}
		v_{\boldsymbol{h}}(\boldsymbol{X}_{ij}),\qquad \frac{\partial v_{\boldsymbol{h}}}{\partial x}(\boldsymbol{X}_{ij}),\qquad \frac{\partial v_{\boldsymbol{h}}}{\partial y}(\boldsymbol{X}_{ij}),\qquad \frac{\partial^2 v_{\boldsymbol{h}}}{\partial x\partial y}(\boldsymbol{X}_{ij}),
	\end{gather*}
	in the macro-element vertices $\boldsymbol{X}_{ij}$ with $(i,j) \in I$, it remains to prove that these functionals are invariant to the application of the quasi-interpolation operator $\tilde \Pi$. Let us prove the identity of the last functional involving the mixed derivative as the other ones are trivial. We observe with \eqref{eq:formulaaij} that
	\begin{gather}\label{eq:valxy}
		\frac{\partial^2 \tilde \Pi v_{\boldsymbol{h}}}{\partial x\partial y} (\boldsymbol{X}_{ij}) = a_{i,j} = \int_{\sigma_{i,j}} \frac{\partial^2 v_{\boldsymbol{h}} (x,y_j)}{\partial x\partial y} \psi_{i}^d(x) \,\D x.
	\end{gather}
	Since $v_{\boldsymbol{h}} \in V_{\boldsymbol{h}}$ it can be expanded on the macro-element $M$ considered in terms of the basis functions $\varphi_{i,j}$, $\phi_{i,j}$, $\chi_{i,j}$ and $\psi_{i,j}$, $(i,j) \in I_M$ according to \eqref{eq:basis2d}. For the mixed derivative on $\sigma_{i,j}$ we find
	\begin{align*}
		\left.\frac{\partial^2 v_{\boldsymbol{h}}}{\partial x\partial y}\right|_{\sigma_{i,j}} &= \sum_{\ell =i}^{i+1} \frac{\partial v_{\boldsymbol{h}}}{\partial y}(\boldsymbol{X}_{\ell{}j}) \frac{\partial^2}{\partial x\partial y} \chi_{\ell,j} + \frac{\partial^2 v_{\boldsymbol{h}}}{\partial x \partial y}(\boldsymbol{X}_{\ell{}j}) \frac{\partial^2}{\partial x\partial y} \psi_{\ell,j} \\
		&= \sum_{\ell = i}^{i+1} \frac{\partial v_{\boldsymbol{h}}}{\partial y}(\boldsymbol{X}_{\ell{}j}) \varphi_\ell'(x) \underbrace{\psi_j'(y_j)}_{=1} + \frac{\partial^2 v_{\boldsymbol{h}}}{\partial x \partial y}(\boldsymbol{X}_{\ell{}j}) \frac{\partial^2}{\partial x\partial y} \psi_{\ell,j},
	\end{align*}
	since the mixed derivative of the other basis functions vanishes on $\sigma_{i,j}$. The functions $\varphi_\ell'$, $\ell=i,i+1$ are continuous, piecewise linear and vanish in the endpoints of the interval $(x_i,x_{i+1})$. Hence, the odd function $\psi_i+\psi_{i+1}$ is $L_2(\sigma_{i,j})$ orthogonal to them. A direct calculation shows the same orthogonality relation for $\theta_{i+1}$, i.e.~ $\mathbb{V}_{i+1/2} \perp_{L_2(\sigma_{i,j})} \varphi_\ell'$, $\ell= i,i+1$. 
Using this orthogonality and \eqref{eq:dualbasis} in \eqref{eq:valxy} we see that
	\begin{gather*}
		\frac{\partial^2 \tilde \Pi v_{\boldsymbol{h}}}{\partial x\partial y} (\boldsymbol{X}_{ij}) = \int_{\sigma_{i,j}} \frac{\partial^2 v_{\boldsymbol{h}} (x,y_j)}{\partial x\partial y} \psi_{i}^d(x) \,\D x = \frac{\partial^2 v_{\boldsymbol{h}}}{\partial x\partial y} (\boldsymbol{X}_{ij}).
	\end{gather*}
	From which the assertion follows.
\end{proof}

\begin{rem}
	With \eqref{eq:formulaaij} the quasi-interpolation operator $\tilde \Pi$ from \eqref{eq:tildepi} is a projector due to \eqref{eq:vhpreservation}.
\end{rem}

\begin{lem}\label{lem:polypreserve}
	For some macro-element $M\in \mathcal{M}_{\boldsymbol{h}}$ let $v \in Q_2(\omega_M)$ i.e.~$v$ is biquadratic on the associated macro-element patch $\omega_M$ around the macro-element $M$, then
	\begin{gather}
		\left.\tilde \Pi v\right|_M = \left.v\right|_M.
	\end{gather}
\end{lem}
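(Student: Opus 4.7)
The plan is to reduce the statement to the already established Lemma \ref{lem:vhpreservation} by a locality argument plus a polynomial extension. First, I would observe that if $v \in Q_2(\omega_M)$ then $v$ is, by definition of the polynomial space \eqref{eq:polyspace}, the restriction to $\omega_M$ of a globally defined biquadratic polynomial $\tilde v \in Q_2(\R^2)$. Since $\tilde v$ is smooth and piecewise (in fact globally) in $Q_2$, we have $\tilde v \in V_{\boldsymbol h}$. Lemma \ref{lem:vhpreservation} then gives $\tilde\Pi \tilde v = \tilde v$ everywhere, and in particular $\tilde\Pi \tilde v\big|_M = \tilde v\big|_M = v\big|_M$.

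Next, I would argue that $\tilde\Pi u\big|_M$ depends only on the values of $u$ on the associated macro-element patch $\omega_M$. Inspecting the definition \eqref{eq:tildepi}, the first contribution $\Pi^r(u|_M)$ uses only nodal point and first-derivative values at the vertices of $M$, hence it is determined by $u|_M \subset u|_{\omega_M}$. The remaining contribution involves the coefficients $a_{k,\ell}$ with $(k,\ell)\in I_M$, which by \eqref{eq:formulaaij} read
\begin{gather*}
 a_{k,\ell} = \int_{\sigma_{k,\ell}} \frac{\partial^2 u(x,y_\ell)}{\partial x\,\partial y}\,\psi_k^d(x)\,\D x.
\end{gather*}
By Definition \ref{defn:rm} each $\sigma_{k,\ell}$ with $(k,\ell)\in I_M$ is contained in $\omega_M$, so $a_{k,\ell}$ is determined by $u_{xy}$ on $\omega_M$.

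Since $v$ and $\tilde v$ agree on $\omega_M$, this locality yields $\tilde\Pi v\big|_M = \tilde\Pi \tilde v\big|_M$, and combining with the first step we conclude
\begin{gather*}
 \tilde\Pi v\big|_M = \tilde\Pi \tilde v\big|_M = \tilde v\big|_M = v\big|_M,
\end{gather*}
as claimed.

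The only step that requires a bit of care is the locality claim for $a_{k,\ell}$: it rests crucially on the fact that Definition \ref{defn:rm} was set up so that $\omega_M$ contains the convex hull of $\{\sigma_{i,j} : (i,j)\in I_M\}$. Had the patch $\omega_M$ been chosen smaller, this invariance on biquadratic data would fail, and correspondingly the subsequent anisotropic quasi-interpolation error estimates would lose the $Q_2$-reproduction property established here.
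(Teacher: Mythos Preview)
Your proof is correct. The paper follows the same overall strategy---reduce to Lemma~\ref{lem:vhpreservation} via an element of $V_{\boldsymbol h}$ that coincides with $v$ on $\omega_M$---but chooses $\Pi v$ as the intermediary rather than a global polynomial extension: since $v\in Q_2(\omega_M)$, the full interpolant satisfies $\Pi v = v$ on every macro-element contained in $\omega_M$, and the chain $v|_M = (\Pi v)|_M = (\tilde\Pi(\Pi v))|_M = (\tilde\Pi v)|_M$ does the rest. Your extension argument is equally valid and arguably more self-contained, since it avoids any appeal to the $Q_2$-preservation of $\Pi$; the paper's version is terser but relies on the reader to supply the same locality observation (that $\tilde\Pi u|_M$ depends only on $u|_{\omega_M}$) which you spell out explicitly.
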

\begin{proof}
	We use the $Q_2$-preservation of the interpolation operator $\Pi$ and Lemma \ref{lem:vhpreservation}:
	\begin{gather*}
		\left. v\right|_M = \left. (\Pi v) \right|_M = \left.\big(\tilde \Pi(\Pi v) \big)\right|_M = \left. (\tilde \Pi v)\right|_M.
	\end{gather*}
	In the second identity we applied Lemma \ref{lem:vhpreservation} and need $v \in Q_2(\omega_M)$ because of the non-local character of $\tilde \Pi$.
\end{proof}

The following lemma is taken from \cite[Theorem 1.1]{DL04}.
\begin{lem}\label{lem:DL04}
	Let $\Omega \subset \R^n$ be convex with diameter $d$ and let $g \in W_{\nu,p}(\Omega)$, $\nu \in \N$, $p \in [1,\infty]$. Then there exists a polynomial $p_\nu^g \in P_{\nu-1}$ for which
	\begin{gather*}
		|g-p_\nu^g|_{W_{k,p}(\Omega)} \le C(n,\nu) d^{\nu-k} |g|_{W_{\nu,p}(\Omega)},\quad k=0,1,\dots,\nu.
	\end{gather*}
\end{lem}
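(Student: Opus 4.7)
The plan is to adapt the classical Bramble--Hilbert argument via averaged Taylor polynomials, where the whole point of the cited result is that for convex $\Omega$ the constant depends only on $n$ and $\nu$, not on any chunkiness/aspect-ratio parameter. First I would fix a smooth weight $\omega \in C_0^\infty(B)$ supported in a ball $B \subset \Omega$, normalized so that $\int \omega = 1$, and define
\begin{gather*}
p_\nu^g(x) \coloneqq \int_B \omega(y) \sum_{|\alpha| < \nu} \frac{1}{\alpha!}\, D^\alpha g(y)\, (x-y)^\alpha \,\D y,
\end{gather*}
which is a polynomial of degree at most $\nu-1$ in $x$ and reproduces $P_{\nu-1}$ exactly. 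By density one may assume $g \in C^\nu(\overline\Omega)$.

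Next I would exploit convexity to obtain an integral representation of the remainder. Because $\Omega$ is convex, the segment between any $x,y \in \Omega$ lies in $\Omega$, so Taylor's theorem with integral remainder is valid throughout $\Omega$. Combining this with the definition of $p_\nu^g$ and Fubini's theorem yields, for each multi-index $\beta$ with $|\beta|\le \nu$, a representation
\begin{gather*}
D^\beta \bigl(g - p_\nu^g\bigr)(x) = \sum_{|\alpha|=\nu} \int_\Omega K_{\alpha,\beta}(x,y)\, D^\alpha g(y) \,\D y,
\end{gather*}
with kernels $K_{\alpha,\beta}$ whose size is controlled by $|x-y|^{\nu - |\beta| - n}$ times factors involving $\omega$ and its derivatives.

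Third, I would apply the standard $L^p$--estimate for integral operators with weakly singular kernels (Young's inequality for convolution, or a direct Minkowski argument). Since all distances $|x-y|$ occurring are bounded by $d = \diam \Omega$, integration of $|x-y|^{\nu-|\beta|-n}$ over a convex set of diameter $d$ produces exactly the factor $d^{\nu - k}$ when $|\beta| = k$, giving the desired bound
\begin{gather*}
|g - p_\nu^g|_{W_{k,p}(\Omega)} \le C(n,\nu)\, d^{\nu-k}\, |g|_{W_{\nu,p}(\Omega)}.
\end{gather*}

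The main obstacle is precisely the shape-independence of the constant: for a long, thin convex $\Omega$, the largest inscribed ball $B$ has radius much smaller than $d$, so a naive choice of $\omega$ forces $\|\omega\|_\infty$ (and hence the constant) to blow up with the aspect ratio. Dur\'an--Lombardi circumvent this by using convexity more deeply, decomposing $\Omega$ with respect to a well-chosen interior point into pieces that are star-shaped with controlled geometry, and constructing the averaging weight compatibly with this decomposition. In a proof plan I would invoke this covering argument as the technical heart of [DL04] rather than reproduce it, since every other step above is standard Bramble--Hilbert machinery.
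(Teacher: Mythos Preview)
The paper does not prove this lemma at all; it simply quotes it from \cite{DL04} and then describes the construction: the polynomial is $p_\nu^g(x) = Q^\nu\big(g(A\cdot)\big)\big(A^{-1}x\big)$, where $Q^\nu$ is the averaged Taylor polynomial over the unit ball $B(0,1)\subset\R^n$ and $A$ is \emph{John's optimal affine transform} for $\Omega$. So there is nothing to compare against in terms of the paper's own argument, but there is a mismatch between what you sketch and what \cite{DL04} actually does.

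First, a bibliographic slip: \cite{DL04} is Dekel--Leviatan, not Dur\'an--Lombardi. More substantively, the mechanism you attribute to them --- a covering/decomposition of $\Omega$ into star-shaped pieces with controlled geometry --- is not their argument. Their idea, as the paper explicitly summarises, is to use \emph{ellipsoids instead of balls}: John's theorem gives an ellipsoid $E\subset\Omega$ with $\Omega\subset nE$ (after translation), and an affine map $A$ sends $B(0,1)$ to $E$. In the transformed coordinates $A^{-1}\Omega$ contains the unit ball and has diameter bounded by a dimensional constant, so the naive averaged-Taylor-polynomial estimate applies there with a constant depending only on $n$ and $\nu$. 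Transforming back, the affine invariance of Sobolev seminorms of a fixed order (the Jacobian factors cancel within each seminorm) converts the unit-diameter bound into the factor $d^{\nu-k}$.

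Your diagnosis of the obstacle (the inscribed \emph{ball} is too small for thin convex bodies) is exactly right, and the first three steps of your plan are the standard Bramble--Hilbert machinery. But the resolution is not a covering argument; it is a single global affine change of variables that replaces the inscribed ball by the John ellipsoid. If you want your proposal to match the cited source, replace the final paragraph accordingly.
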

Here the polynomial
\begin{gather*}
	p_\nu^g(x) = Q^\nu\big( g(A\cdot)\big) \big(A^{-1} x\big)
\end{gather*}
is constructed using the averaged Taylor polynomial $Q^\nu$ over the ball $B(0,1) \subset \R^n$ and $A$ is John's optimal affine transform with respect to $\Omega$, cp.~\cite{DL04}. The basic idea of this paper is the usage of ellipsoids in contrast to balls which is more suitable for anisotropic elements to which we want to apply this result. Yet, we shall first give a small modification of it.
\begin{lem}\label{lem:lemmaDLS}
	Let $\Omega \subset \R^n$ be convex with diameter $d$, $\boldsymbol{\gamma}$ be a multi-index with $|\boldsymbol{\gamma}|=m \in \N$, and let $v \in W_{\ell,p}(\Omega)$, $m \le \ell \in \N$, $p\in[1,\infty]$. Then there exists a polynomial $p_\ell^v \in P_{\ell-1}$ for which
	\begin{gather*}
		|\boldsymbol{D}^{\boldsymbol{\gamma}} (v-p_\ell^v)|_{W_{k,p}(\Omega)} \le C(n,\ell-m) d^{\ell-m-k} |\boldsymbol{D}^{\boldsymbol{\gamma}} v|_{W_{\ell-m,p}(\Omega)},\quad k=0,1,\dots,\ell-m.
	\end{gather*}
\end{lem}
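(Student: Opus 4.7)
The plan is to reduce the statement directly to the cited Lemma \ref{lem:DL04} by applying it not to $v$ itself but to its derivative $g \coloneqq \boldsymbol{D}^{\boldsymbol{\gamma}} v$. Since $|\boldsymbol{\gamma}|=m\le\ell$ and $v\in W_{\ell,p}(\Omega)$ we have $g\in W_{\ell-m,p}(\Omega)$, so Lemma \ref{lem:DL04} applied with $\nu\coloneqq\ell-m$ yields a polynomial $\tilde q\in P_{\ell-m-1}$ such that
\begin{gather*}
    |g-\tilde q|_{W_{k,p}(\Omega)} \le C(n,\ell-m)\, d^{\ell-m-k}\, |g|_{W_{\ell-m,p}(\Omega)},\qquad k=0,1,\dots,\ell-m.
\end{gather*}
Since $g=\boldsymbol{D}^{\boldsymbol{\gamma}} v$, the right-hand side already has the form demanded by the lemma to be proved.

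All that remains is to exhibit $p_\ell^v\in P_{\ell-1}$ with $\boldsymbol{D}^{\boldsymbol{\gamma}} p_\ell^v = \tilde q$, so that $\boldsymbol{D}^{\boldsymbol{\gamma}}(v-p_\ell^v)=g-\tilde q$ and the preceding estimate gives the claim. This is a purely algebraic antidifferentiation: writing $\tilde q(x)=\sum_{|\boldsymbol{\alpha}|\le \ell-m-1} c_{\boldsymbol{\alpha}}\, x^{\boldsymbol{\alpha}}$ in monomials, we set
\begin{gather*}
    p_\ell^v(x) \coloneqq \sum_{|\boldsymbol{\alpha}|\le \ell-m-1} \frac{c_{\boldsymbol{\alpha}}\,\boldsymbol{\alpha}!}{(\boldsymbol{\alpha}+\boldsymbol{\gamma})!}\, x^{\boldsymbol{\alpha}+\boldsymbol{\gamma}}.
\end{gather*}
Then $|\boldsymbol{\alpha}+\boldsymbol{\gamma}|\le \ell-1$, so $p_\ell^v\in P_{\ell-1}$, and a termwise differentiation immediately gives $\boldsymbol{D}^{\boldsymbol{\gamma}} p_\ell^v = \tilde q$, completing the argument.

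There is no real obstacle in this proof; the only point worth flagging is that $p_\ell^v$ is \emph{not} the averaged Taylor polynomial of $v$ of order $\ell$ in the sense of \cite{DL04} (that would require regularity $v\in W_{\ell,p}(\Omega)$ only in order to invoke the averaged Taylor polynomial, which is already granted, but would also force commitment to a specific construction). Instead we merely use the black-boxed Lemma \ref{lem:DL04} together with polynomial antidifferentiation, which is all the subsequent application in the paper needs, since the polynomial $p_\ell^v$ is only accessed through the derivative $\boldsymbol{D}^{\boldsymbol{\gamma}} p_\ell^v$.
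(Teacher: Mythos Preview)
Your argument is correct for the statement as written, but it differs from the paper's route in one respect that is worth noting. Both proofs begin by applying Lemma~\ref{lem:DL04} to $g=\boldsymbol{D}^{\boldsymbol{\gamma}} v$ with $\nu=\ell-m$. The paper then invokes the commutation property of the averaged Taylor polynomial with differentiation (\cite[Corollary~3.4]{DL04}) to identify the resulting polynomial as $\boldsymbol{D}^{\boldsymbol{\gamma}} p_\ell^v$, where $p_\ell^v$ is the averaged Taylor polynomial of $v$ itself --- a specific object \emph{independent of $\boldsymbol{\gamma}$}; this is why the paper first assumes $v\in C^\ell$ and passes to $W_{\ell,p}$ by density. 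You instead construct $p_\ell^v$ by elementary monomial antidifferentiation, which is more direct (no density argument, Lemma~\ref{lem:DL04} used as a black box) but produces a polynomial that depends on $\boldsymbol{\gamma}$. For the lemma as stated this is harmless, since $\boldsymbol{\gamma}$ is fixed. However, your closing remark that the subsequent application only accesses $p_\ell^v$ through $\boldsymbol{D}^{\boldsymbol{\gamma}} p_\ell^v$ is not accurate: the proof of Lemma~\ref{lem:lemmapoly} applies the present lemma for \emph{every} $\boldsymbol{\gamma}$ with $|\boldsymbol{\gamma}|=m$ and then sums, and its conclusion requires a single polynomial $q$ valid simultaneously for all $m=0,\dots,\ell$. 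So $\boldsymbol{\gamma}$-independence of $p_\ell^v$ is genuinely needed downstream, and the paper's construction delivers it for free.
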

\begin{proof}
	First assume that $v \in C^\ell(\Omega)$. Applying Lemma \ref{lem:DL04} with $\nu = \ell-m$ and $g = \boldsymbol{D}^{\boldsymbol{\gamma}} v$ yields the existence of a polynomial $p_{\ell-m}^{\boldsymbol{D}^{\boldsymbol{\gamma}} v} \in P_{\ell-m-1}$ such that
	\begin{gather*}
		|\boldsymbol{D}^{\boldsymbol{\gamma}} v - p_{\ell-m}^{\boldsymbol{D}^{\boldsymbol{\gamma}} v}|_{k,p} \le C(n,\ell-m) d^{\ell-m-k} |\boldsymbol{D}^{\boldsymbol{\gamma}} v|_{\ell-m,p},\quad k=0,1,\dots,\ell-m.
	\end{gather*}
	Next one finds that for $1\le m\le \ell-1$
	\begin{gather*}
		p_{\ell-m}^{\boldsymbol{D}^{\boldsymbol{\gamma}} v}(x) = Q^{\ell-m}\big((\boldsymbol{D}^{\boldsymbol{\gamma}} v)(A\cdot)\big)\big(A^{-1}x\big) = \boldsymbol{D}^{\boldsymbol{\gamma}} \Big( Q^\ell \big(v (A\cdot ) \big)\big(A^{-1}x\big) \Big) = \boldsymbol{D}^{\boldsymbol{\gamma}} p_\ell^v,
	\end{gather*}
	i.e.~the averaged Taylor polynomial and differentiation commute in some sense \cite[Corollary 3.4]{DL04}. Now the case $v \in W_{\ell,p}(\Omega)$ follows by standard arguments based on the density of $C^\infty(\Omega)$ in $W_{\ell,p}(\Omega)$.
	For $m=0$ the assertion of the Lemma is given by Lemma \ref{lem:DL04} and for $m=\ell$ the assertion is trivial.
\end{proof}

\begin{rem}
	A slightly more general result is given in \cite[Lemma 2.1]{A99}. However, there the dependencies of the constant of geometrical properties of the domain considered is not stated explicitly.
\end{rem}

\begin{assump}\label{assump:ass1}
	Let for each node $\boldsymbol{X}_{ij}$ of a macro-element $M$ the macro-element edges $\sigma_{i,j}$ be chosen in such a way that with the associated macro-element patch $\omega_M$ around $M$ it holds
	\begin{gather}\label{eq:ass1}
		h_k(\omega_M) \le C h_k(M)\quad k=1,2.
	\end{gather}
	Here and in the following $h_k(T)$ denotes the size of an axis-aligned rectangle $T$ in $x_k$-direction, $k=1,2$. Moreover, we set $\boldsymbol{h}_M = (h_1(M),h_2(M))$ for any macro-element $M$.
\end{assump}

\begin{lem}\label{lem:lemmapoly}
	Based on Assumption \ref{assump:ass1} for any $u\in W_{\ell,p}(\omega_M)$ there is a polynomial $q \in P_{\ell-1}(\omega_M)$ with
	\begin{gather*}
		\sum_{|\boldsymbol{\alpha}|\le \ell-m} \boldsymbol{h}_M^{\boldsymbol{\alpha}} \left| \boldsymbol{D}^{\boldsymbol{\alpha}} (u-q)\right|_{W_{m,p}(\omega_M)} \le C \sum_{|\boldsymbol{\alpha}|= \ell-m} \boldsymbol{h}_M^{\boldsymbol{\alpha}} \left| \boldsymbol{D}^{\boldsymbol{\alpha}} u\right|_{W_{m,p}(\omega_M)},
	\end{gather*}
	for all $m=0,\dots,\ell$.
\end{lem}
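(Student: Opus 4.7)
The plan is to transfer Lemma~\ref{lem:lemmaDLS} from a reference patch of unit diameter back to the anisotropic patch $\omega_M$ via an affine change of variables, and then take the appropriate weighted sum over multi-indices.

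First I would introduce the axis-aligned affine bijection $F\colon \hat\omega \coloneqq [-1,1]^2 \to \omega_M$ of the form $F(\hat x,\hat y) = (x_0 + h_1(\omega_M)\hat x,\, y_0 + h_2(\omega_M)\hat y)$, write $\boldsymbol{h} \coloneqq (h_1(\omega_M),h_2(\omega_M))$ and set $\hat u \coloneqq u \circ F \in W_{\ell,p}(\hat\omega)$. Since $\hat\omega$ is convex with diameter of order one, Lemma~\ref{lem:lemmaDLS} produces a \emph{single} polynomial $\hat q \in P_{\ell-1}$ --- namely the averaged Taylor polynomial of $\hat u$, which does not depend on $\boldsymbol{\gamma}$ --- such that, for every multi-index $\boldsymbol{\beta}$ with $|\boldsymbol{\beta}|=m$ and every $k \in \{0,\dots,\ell-m\}$,
\begin{gather*}
|\hat{\boldsymbol{D}}^{\boldsymbol{\beta}}(\hat u-\hat q)|_{W_{k,p}(\hat\omega)} \le C\, |\hat{\boldsymbol{D}}^{\boldsymbol{\beta}}\hat u|_{W_{\ell-m,p}(\hat\omega)}.
\end{gather*}
Picking the specific summand $\|\hat{\boldsymbol{D}}^{\boldsymbol{\alpha}+\boldsymbol{\beta}}(\hat u-\hat q)\|_{L_p(\hat\omega)}$ out of the left-hand semi-norm with $k=|\boldsymbol{\alpha}|$ yields a bound valid for every pair $(\boldsymbol{\alpha},\boldsymbol{\beta})$ with $|\boldsymbol{\alpha}|\le \ell-m$ and $|\boldsymbol{\beta}|=m$.

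Next I would push everything back to $\omega_M$ by setting $q \coloneqq \hat q \circ F^{-1} \in P_{\ell-1}(\omega_M)$ and using the scaling $\boldsymbol{D}^{\boldsymbol{\mu}} u = \boldsymbol{h}^{-\boldsymbol{\mu}}\hat{\boldsymbol{D}}^{\boldsymbol{\mu}}\hat u$ together with the change-of-variables Jacobian $h_1(\omega_M)h_2(\omega_M)$. The anisotropic weights $\boldsymbol{h}^{-\boldsymbol{\beta}}$ contributed by the factor $\hat{\boldsymbol{D}}^{\boldsymbol{\beta}}$ appear identically on both sides and cancel, leaving
\begin{gather*}
\boldsymbol{h}^{\boldsymbol{\alpha}}\|\boldsymbol{D}^{\boldsymbol{\alpha}+\boldsymbol{\beta}}(u-q)\|_{L_p(\omega_M)} \le C \Bigl(\sum_{|\boldsymbol{\delta}|=\ell-m} \boldsymbol{h}^{p\boldsymbol{\delta}} \|\boldsymbol{D}^{\boldsymbol{\delta}+\boldsymbol{\beta}} u\|_{L_p(\omega_M)}^p\Bigr)^{1/p}.
\end{gather*}
Raising to the $p$-th power, summing over $|\boldsymbol{\beta}|=m$ and then extracting the $p$-th root recovers $\boldsymbol{h}^{\boldsymbol{\alpha}}|\boldsymbol{D}^{\boldsymbol{\alpha}}(u-q)|_{W_{m,p}(\omega_M)}$ on the left, while the right-hand side becomes manifestly independent of $\boldsymbol{\alpha}$; the subsequent sum over the finitely many $|\boldsymbol{\alpha}|\le \ell-m$ then only absorbs a combinatorial constant.

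To conclude I would apply the elementary $\ell^p \hookrightarrow \ell^1$ inequality $\bigl(\sum_{\boldsymbol{\delta}} b_{\boldsymbol{\delta}}^p\bigr)^{1/p} \le \sum_{\boldsymbol{\delta}} b_{\boldsymbol{\delta}}$ to $b_{\boldsymbol{\delta}} \coloneqq \boldsymbol{h}^{\boldsymbol{\delta}}|\boldsymbol{D}^{\boldsymbol{\delta}} u|_{W_{m,p}(\omega_M)}$ in order to recognise the upper bound as $\sum_{|\boldsymbol{\delta}|=\ell-m} \boldsymbol{h}^{\boldsymbol{\delta}}|\boldsymbol{D}^{\boldsymbol{\delta}} u|_{W_{m,p}(\omega_M)}$, and invoke Assumption~\ref{assump:ass1} to replace $\boldsymbol{h}$ by $\boldsymbol{h}_M$ up to a universal constant (with the usual modifications for $p=\infty$). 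I expect the only real obstacle to be bookkeeping: verifying that after the affine change of variables the anisotropic weights attached to the various multi-indices collapse in exactly the right way, which is precisely the reason why Lemma~\ref{lem:lemmaDLS} must be invoked on $\hat\omega$ and not on $\omega_M$ directly (using $d = \max(h_1(M),h_2(M))$ would destroy the anisotropy).
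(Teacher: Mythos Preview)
Your argument is correct and follows essentially the same route as the paper: scale to a reference patch of bounded diameter, invoke Lemma~\ref{lem:lemmaDLS} there, transform back, and sum over the multi-indices. The only cosmetic difference is that the paper scales $M$ (not $\omega_M$) to $[-1,1]^2$, so that Assumption~\ref{assump:ass1} is used up front to bound the diameter of the transformed patch $\hat\omega_M$, whereas you scale $\omega_M$ itself to $[-1,1]^2$ and use Assumption~\ref{assump:ass1} only at the end to pass from $\boldsymbol{h}_{\omega_M}$ to $\boldsymbol{h}_M$; both orderings are equivalent.
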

\begin{proof}
	Using an affine transformation we can map the macro-element $M$ to the reference macro-element $[-1,1]^2$. This transformation maps $\omega_M$ to $\hat \omega_M$. Based on \eqref{eq:ass1} we see that the diameter of the rectangle $\hat \omega_M$ can be bounded by a constant. Hence, we can apply Lemma \ref{lem:lemmaDLS} in the transformed domain. Scaling back to $\omega_M$ we obtain due to $\boldsymbol{h}_M^{\boldsymbol{\alpha}} \boldsymbol{D}^{\boldsymbol{\alpha}} = \boldsymbol{\hat D}^{\boldsymbol{\alpha}}$ that
	\begin{gather*}
		\sum_{|\boldsymbol{\alpha}| \le \ell-m} \boldsymbol{h}_M^{\boldsymbol{\alpha}} \|\boldsymbol{D}^{\boldsymbol{\alpha} + \boldsymbol{\gamma}} (u-q)\|_{L_p(\omega_M)} \le C \sum_{|\boldsymbol{\alpha}| = \ell-m} \boldsymbol{h}_M^{\boldsymbol{\alpha}} \|\boldsymbol{D}^{\boldsymbol{\alpha} + \boldsymbol{\gamma}} u\|_{L_p(\omega_M)},
	\end{gather*}
	for a multi-index $\boldsymbol{\gamma}$ with $|\boldsymbol{\gamma}|=m$. The assertion follows by summing up over all of these multi-indices.
\end{proof}

\begin{rem}
	A similar lemma is given in \cite[Lemma 3.1]{A99}. However, there the mesh is required to have no abrupt changes in the element sizes. Clearly, Assumption \ref{assump:ass1} can be dropped then. Note that \eqref{eq:ass1} can also be found in the paper \cite{AR08}.
\end{rem}

\begin{lem}[Stability of $\tilde \Pi$]\label{lem:stab}
	Under Assumption \ref{assump:ass1} the quasi-interpolation operator $\tilde \Pi$ satisfies the stability estimate
	\begin{gather*}
		\big| \tilde \Pi u \big|_{W_{1,p}(M)} \le C C_{M,p} \sum_{|\boldsymbol{\alpha}|\le 2} \boldsymbol{h}_M^{\boldsymbol{\alpha}} |\boldsymbol{D}^{\boldsymbol{\alpha}} u|_{W_{1,p}(\omega_M)}
	\end{gather*}
	with
	\begin{gather*}
		C_{M,p} \coloneqq \left(\frac{\meas M}{\min_{M' \in \mathcal{M}_{\boldsymbol{h}},\,M' \subset \omega_M} \meas M'}\right)^{1/p} \ge 1,
	\end{gather*}
	provided that $u \in W_{3,p}(\omega_M) \cap C^1(M)$ with $p\in[1,\infty]$. 
\end{lem}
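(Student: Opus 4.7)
My plan is to split $\tilde\Pi u$ into its local Hermite part and its non-local edge-averaging part, bound each by affine equivalence combined with anisotropic trace and Hölder arguments, and track where the area ratio $\meas M/\meas M'$ enters. The main obstacle is the non-local part: although Assumption~\ref{assump:ass1} bounds the sizes of $\omega_M$ from above in each coordinate direction, an individual macro-element $M'\subset\omega_M$ carrying an averaging edge $\sigma_{k,\ell}$ may have area far smaller than $\meas M$, and this loss is precisely what forces the factor $C_{M,p}$.

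From the definition \eqref{eq:tildepi} and the triangle inequality, $|\tilde\Pi u|_{W_{1,p}(M)} \le |\Pi^r u|_{W_{1,p}(M)} + \sum_{(k,\ell)\in I_M} |a_{k,\ell}|\,|\psi_{k,\ell}|_{W_{1,p}(M)}$. For the local part, $\Pi^r$ is a local Hermite operator defined by twelve nodal functionals, so affine equivalence to the reference macro-element $\Lambda$ via the diagonal map \eqref{eq:mapping}, combined with the two-dimensional Sobolev embedding $W_{2,p}(\Lambda)\hookrightarrow C^1(\overline\Lambda)$, bounds its defining functionals by $\|\hat u\|_{W_{2,p}(\Lambda)}$. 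A Bramble--Hilbert step using the averaged-Taylor polynomial $q\in P_2$ supplied by Lemma~\ref{lem:lemmapoly} (which $\tilde\Pi$ reproduces exactly on $M$ by Lemma~\ref{lem:polypreserve}, since $P_2\subset Q_2$) absorbs the lower-order isotropic scalings that come out of the embedding, and scaling back to $M$ yields $|\Pi^r u|_{W_{1,p}(M)}\le C\sum_{|\boldsymbol\alpha|\le 2}\boldsymbol h_M^{\boldsymbol\alpha}|\boldsymbol D^{\boldsymbol\alpha} u|_{W_{1,p}(M)}$.

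For each non-local contribution I fix $(k,\ell)\in I_M$, assume without loss of generality that $\sigma_{k,\ell}$ is a horizontal macro-element edge contained in some $M'\subset\omega_M$ of sizes $(h_{M'}^x,h_{M'}^y)$, and estimate three ingredients. From \eqref{eq:formulaaij}, Hölder's inequality, and the bound $\|\psi_i^d\|_{L_\infty(\sigma_{k,\ell})}\le C(h_{M'}^x)^{-1}$ noted after \eqref{eq:duallocal}, $|a_{k,\ell}|\le C(h_{M'}^x)^{-1/p}\|u_{xy}\|_{L_p(\sigma_{k,\ell})}$. The standard anisotropic trace inequality on $M'$ (obtained by integrating $|u_{xy}(x,y_j)|^p\le C(|u_{xy}(x,y)|^p+(h_{M'}^y)^{p-1}\!\int_{M'}|u_{xyy}|^p)$ over $M'$) gives $\|u_{xy}\|_{L_p(\sigma_{k,\ell})}\le C\bigl((h_{M'}^y)^{-1/p}\|u_{xy}\|_{L_p(M')}+(h_{M'}^y)^{1-1/p}\|u_{xyy}\|_{L_p(M')}\bigr)$, and the scaling $\psi_{k,\ell}|_M=h_M^x h_M^y\,\hat\psi\circ F_M^{-1}$ produces $|\psi_{k,\ell}|_{W_{1,p}(M)}\le C(h_M^x+h_M^y)(h_M^x h_M^y)^{1/p}$.

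Multiplying these bounds, the factor $(h_{M'}^x h_{M'}^y)^{-1/p}(h_M^x h_M^y)^{1/p}=(\meas M/\meas M')^{1/p}\le C_{M,p}$ appears, and we obtain $|a_{k,\ell}||\psi_{k,\ell}|_{W_{1,p}(M)}\le C\,C_{M,p}(h_M^x+h_M^y)\bigl(\|u_{xy}\|_{L_p(M')}+h_{M'}^y\|u_{xyy}\|_{L_p(M')}\bigr)$. Applying Assumption~\ref{assump:ass1} to replace $h_{M'}^y$ by $Ch_M^y$ and enlarging the domain from $M'$ to $\omega_M$, each resulting term is contained in the target sum: $(h_M^x+h_M^y)\|u_{xy}\|_{L_p(\omega_M)}$ appears inside $h_M^x|u_x|_{W_{1,p}(\omega_M)}+h_M^y|u_y|_{W_{1,p}(\omega_M)}$, while $(h_M^x h_M^y+(h_M^y)^2)\|u_{xyy}\|_{L_p(\omega_M)}$ appears inside $h_M^x h_M^y|u_{xy}|_{W_{1,p}(\omega_M)}+(h_M^y)^2|u_{yy}|_{W_{1,p}(\omega_M)}$. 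The vertical-edge case is entirely symmetric; summing over $(k,\ell)\in I_M$ and combining with the local bound finishes the proof.
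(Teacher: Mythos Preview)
Your treatment of the non-local edge-averaging part is essentially the paper's argument with a cosmetic change (you trace into $L_p(\sigma_{k,\ell})$ and use H\"older, whereas the paper traces into $L_1(\sigma_{k,\ell})$ via an embedding on the scaled reference patch); the area ratio $C_{M,p}$ emerges the same way, and your term-matching at the end is correct.

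The gap is in your handling of the local term $|\Pi^r u|_{W_{1,p}(M)}$. First, the embedding $W_{2,p}(\Lambda)\hookrightarrow C^1(\overline\Lambda)$ you invoke fails in two dimensions for $p\le 2$, so the twelve nodal functionals of $\Pi^r$ (which include first-derivative point values) are not bounded by $\|\hat u\|_{W_{2,p}(\Lambda)}$ in that range. Second, even if you repair the embedding by using $W_{3,p}\hookrightarrow C^1$, scaling the resulting crude bound $\|(\hat\Pi^r\hat u)_{\hat x}\|_{L_p(\Lambda)}\le C\|\hat u\|_{W_{3,p}(\Lambda)}$ back to $M$ produces a term of order $(h_M^x)^{-1}\|u\|_{L_p(M)}$ from the zeroth-order contribution, which is \emph{not} controlled by the anisotropic target $\sum_{|\boldsymbol\alpha|\le 2}\boldsymbol h_M^{\boldsymbol\alpha}|\boldsymbol D^{\boldsymbol\alpha}u|_{W_{1,p}}$. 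Your ``Bramble--Hilbert step'' meant to absorb this term is misapplied: you appeal to Lemma~\ref{lem:polypreserve}, but that lemma concerns $\tilde\Pi$, not $\Pi^r$, and $\Pi^r$ by itself does \emph{not} reproduce all of $P_2$ (it fails on $xy$, cf.~\eqref{eq:errred2}), so subtracting $q\in P_2$ does not eliminate $\Pi^r q$.

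The paper sidesteps all of this with the one-line device
\[
\|(\Pi^r u)_x\|_{L_p(M)}\le\|u_x\|_{L_p(M)}+\|(u-\Pi^r u)_x\|_{L_p(M)}
\]
and then invokes the already-established anisotropic error estimate of Theorem~\ref{thm:Pired} for the second term, which immediately gives $\|(\Pi^r u)_x\|_{L_p(M)}\le C\sum_{|\boldsymbol\alpha|\le 2}\boldsymbol h_M^{\boldsymbol\alpha}\|\boldsymbol D^{\boldsymbol\alpha}u_x\|_{L_p(M)}$ for every $p\in[1,\infty]$ without any $C^1$ embedding.
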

\begin{proof}
	Let $M\in \mathcal{M}_{\boldsymbol{h}}$ be a macro-element and set $\boldsymbol{h}_M = (h_1(M),h_2(M))$. We consider a first derivative in $x$-direction. Using the definition of $\tilde \Pi$ and a triangle inequality we find that
	\begin{gather}\label{eq:stabtriangle}
		\big\|\big(\tilde \Pi u\big)_x\big\|_{L_p(M)} \le \|(\Pi^r u)_x\|_{L_p(M)} + \left \| \sum_{(i,j)\in I_M}  a_{i,j} \frac{\partial \psi_{i,j}}{\partial x} \right\|_{L_p(M)},
	\end{gather}
	with coefficients $a_{i,j}$ depending on the direction of $\sigma_{i,j}$ given by
	\begin{gather}\label{eq:aijfinal}
	 a_{i,j} = \left\{\begin{alignedat}{2}
		&\int_{\sigma_{i,j}} \frac{\partial^2 u (x,y_j)}{\partial x\partial y} \psi_{i}^d(x) \,\D x &\quad &\text{if $\sigma_{i,j}$ is horizontal,}\\
		&\int_{\sigma_{i,j}} \frac{\partial^2 u (x_i,y)}{\partial x\partial y} \psi_{j}^d(y) \,\D y &\quad &\text{if $\sigma_{i,j}$ is vertical.}\\
		\end{alignedat}\right.
	 \end{gather}
	 We estimate the first term on the right hand side of \eqref{eq:stabtriangle} using Theorem \ref{thm:Pired}
	 \begin{gather}
	 \begin{aligned}\label{eq:stabpir}
		\big\|\big(\Pi^r u\big)_x\big\|_{L_p(M)} &\le \|u_x\|_{L_p(M)} + \|(u - \Pi^r u)_x\|_{L_p(M)}\\
		&\le C \sum_{|\boldsymbol{\alpha}|\le 2} \boldsymbol{h}_M^{\boldsymbol{\alpha}} \|\boldsymbol{D}^{\boldsymbol{\alpha}} u_x\|_{L_p(M)}.
	 \end{aligned}
	 \end{gather}
	 For the other term we use $\left\|\frac{\partial }{\partial x} \psi_{i,j} \right\|_{L_\infty(M)} \le C h_2(M)$ which yields
	 \begin{gather}\label{eq:stabhol1}
		\left \| \sum_{(i,j)\in I_M}  a_{i,j} \frac{\partial \psi_{i,j}}{\partial x} \right\|_{L_p(M)} \le C (\meas M)^{1/p} h_2(M) \max_{(i,j) \in I_M} |a_{i,j}|.
	 \end{gather}
	 Next we use $\|\psi^d_k\|_{\infty,\sigma_{i,j}} \le C \meas(\sigma_{i,j})^{-1}$ for $k=i,j$ and obtain with a H\"older inequality
	 \begin{gather}\label{eq:stabhol2}
		|a_{i,j}| \le C \meas(\sigma_{i,j})^{-1} \left\|\frac{\partial^2 u}{\partial x\partial y}\right\|_{L_1(\sigma_{i,j})}\quad \text{for $(i,j) \in I_M$}.
	 \end{gather}
	 Set
	 \begin{gather*}
		M' \coloneqq \argmin_{\substack{\tilde M \in \mathcal{M}_{\boldsymbol{h}} \\ \tilde M \subset \omega_M}} (\meas \tilde M),
	 \end{gather*}
	 i.e.~the macro-element $M' \in \mathcal{M}_{\boldsymbol{h}}$ belongs to the associated macro-element patch $\omega_M$ around $M$ and realizes the smallest surface measure.
	 Using the embeddings $W_{1,p} (\hat \omega_M) \hookrightarrow W_{1,p}(\hat M') \hookrightarrow L_1(\hat\sigma_{i,j})$ in a transformed domain $\hat \omega_M$ and scaling back to the original one, we see that
	 \begin{gather}\label{eq:stabtrace}
		\|v\|_{L_1(\sigma_{i,j})} \le \meas(\sigma_{i,j}) \meas(M')^{-1/p} \sum_{|\boldsymbol{\alpha}| \le 1} \boldsymbol{h}_M^{\boldsymbol{\alpha}} \|\boldsymbol{D}^{\boldsymbol{\alpha}} v\|_{L_p(\omega_M)}
	 \end{gather}
	 for $v\in W_{1,p}(\omega_M)$. Here we also used Assumption \ref{assump:ass1}. Collecting \eqref{eq:stabhol1}, \eqref{eq:stabhol2} and \eqref{eq:stabtrace} with $v=\frac{\partial^2 u}{\partial x\partial y}$ we obtain 
	 \begin{gather}\label{eq:stabprecise}
		\left \| \sum_{(i,j)\in I_M}  a_{i,j} \frac{\partial \psi_{i,j}}{\partial x} \right\|_{L_p(M)} \le C \frac{(\meas M)^{1/p}}{(\meas M')^{1/p}} \sum_{|\boldsymbol{\alpha}| \le 1} \boldsymbol{h}_M^{\boldsymbol{\alpha} + (0,1)} \left\|\boldsymbol{D}^{\boldsymbol{\alpha}+(0,1)} u_x\right\|_{L_p(\omega_M)}.
	 \end{gather}
	 Together with \eqref{eq:stabtriangle} and \eqref{eq:stabpir} the assertion of the lemma is proven since the first derivative in $y$-direction can be estimated analogously.
\end{proof}

\begin{thm}\label{thm:tapprox}
	Based on Assumption \ref{assump:ass1} for the quasi-interpolation operator $\tilde \Pi$ the approximation error estimate
	\begin{gather}\label{eq:tapprox}
		|u- \tilde \Pi u|_{W_{1,p}(M)} \le C C_{M,p} \sum_{|\boldsymbol{\alpha}| = 2} \boldsymbol{h}_M^{\boldsymbol{\alpha}} |\boldsymbol{D}^{\boldsymbol{\alpha}} u|_{W_{1,p}(\omega_M)}
	\end{gather}
	holds true provided that $u \in W_{3,p}(\omega_M) \cap C^1(M)$ with $p\in[1,\infty]$. Here $C_{M,p}$ is the constant from Lemma \ref{lem:stab}.
\end{thm}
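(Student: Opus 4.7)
The plan is to combine the polynomial preservation property (Lemma \ref{lem:polypreserve}), the stability estimate (Lemma \ref{lem:stab}), and the polynomial approximation result (Lemma \ref{lem:lemmapoly}) in the standard Bramble-Hilbert manner adapted to the non-local operator $\tilde\Pi$. First I would apply Lemma \ref{lem:lemmapoly} with the choice $\ell=3$, $m=1$ to obtain a polynomial $q \in P_2(\omega_M)$ satisfying
\begin{gather*}
\sum_{|\boldsymbol{\alpha}|\le 2} \boldsymbol{h}_M^{\boldsymbol{\alpha}} |\boldsymbol{D}^{\boldsymbol{\alpha}}(u-q)|_{W_{1,p}(\omega_M)} \le C \sum_{|\boldsymbol{\alpha}|= 2} \boldsymbol{h}_M^{\boldsymbol{\alpha}} |\boldsymbol{D}^{\boldsymbol{\alpha}} u|_{W_{1,p}(\omega_M)},
\end{gather*}
which is precisely the right-hand side appearing in \eqref{eq:tapprox}.

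Since $P_2(\omega_M)\subset Q_2(\omega_M)$, Lemma \ref{lem:polypreserve} yields $\tilde\Pi q|_M=q|_M$, so by linearity $(u-\tilde\Pi u)|_M = (u-q)|_M - \tilde\Pi(u-q)|_M$. A triangle inequality in the $W_{1,p}(M)$-seminorm splits the error into
\begin{gather*}
|u-\tilde\Pi u|_{W_{1,p}(M)} \le |u-q|_{W_{1,p}(M)} + |\tilde\Pi(u-q)|_{W_{1,p}(M)}.
\end{gather*}
The first term is bounded by $|u-q|_{W_{1,p}(\omega_M)}$ and hence by the desired expression thanks to the displayed consequence of Lemma \ref{lem:lemmapoly} above (taking the $\boldsymbol{\alpha}=\boldsymbol{0}$ contribution on the left-hand side).

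For the second term, I would invoke the stability estimate of Lemma \ref{lem:stab} applied to $u-q$ (which still lies in $W_{3,p}(\omega_M)\cap C^1(M)$ because $q$ is a polynomial), getting
\begin{gather*}
|\tilde\Pi(u-q)|_{W_{1,p}(M)} \le C C_{M,p} \sum_{|\boldsymbol{\alpha}|\le 2} \boldsymbol{h}_M^{\boldsymbol{\alpha}} |\boldsymbol{D}^{\boldsymbol{\alpha}}(u-q)|_{W_{1,p}(\omega_M)},
\end{gather*}
and then bound the right-hand side again by Lemma \ref{lem:lemmapoly}. Combining the two contributions (and absorbing the constant $1\le C_{M,p}$ into the first piece) produces \eqref{eq:tapprox}.

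There is no deep obstacle in this program; the only subtlety worth flagging is that the preservation step requires $\tilde\Pi q|_M$ to be evaluated with $q$ known on the \emph{entire} associated patch $\omega_M$ and not merely on $M$, because the functionals defining $\tilde\Pi$ through \eqref{eq:formulaaij} reach onto macro-element edges outside $M$. This is exactly why Lemma \ref{lem:polypreserve} was formulated with $v\in Q_2(\omega_M)$, and it is what makes Assumption \ref{assump:ass1} indispensable: without a controlled geometry of $\omega_M$ the approximation property from Lemma \ref{lem:lemmapoly} on $\omega_M$ would not give back a constant independent of the aspect ratios.
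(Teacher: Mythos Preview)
Your proposal is correct and follows essentially the same route as the paper: triangle inequality with the polynomial $q$ from Lemma \ref{lem:lemmapoly}, polynomial preservation via Lemma \ref{lem:polypreserve}, stability from Lemma \ref{lem:stab}, and a final application of Lemma \ref{lem:lemmapoly}. Your treatment of the term $|u-q|_{W_{1,p}(M)}$ via the trivial inclusion $M\subset\omega_M$ is in fact slightly more direct than the paper's, which interposes a scaled embedding before reaching the same conclusion.
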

\begin{proof}
	Let $q \in P_2(\omega_M)$ denote the polynomial of Lemma \ref{lem:lemmapoly} with $\ell = 3$. A triangle inequality gives
	\begin{gather}\label{eq:tapproxtriangle}
		|u- \tilde \Pi u|_{W_{1,p}(M)} \le |u-q|_{W_{1,p}(M)} + |q - \tilde \Pi u|_{W_{1,p}(M)}.
	\end{gather}
	As a polynomial $q \in P_2(\omega_M)$ is preserved by $\tilde \Pi$ on the macro-element $M$ considered, see Lemma \ref{lem:polypreserve}. Hence, we can use the stability of $\tilde \Pi$ shown in Lemma \ref{lem:stab} to get a bound for the second summand
	\begin{gather}\label{eq:tapprox2}
		|q - \tilde \Pi u|_{W_{1,p}(M)} = |\tilde \Pi (q - u)|_{W_{1,p}(M)} \le C C_{M,p} \sum_{|\boldsymbol{\alpha}|\le 2} \boldsymbol{h}_M^{\boldsymbol{\alpha}} |\boldsymbol{D}^{\boldsymbol{\alpha}} (q-u)|_{W_{1,p}(\omega_M)}.
	\end{gather}
	The first summand is estimated as follows:
	\begin{gather}\label{eq:tapprox1}
		|u-q|_{W_{1,p}(M)} \le C \sum_{|\boldsymbol{\alpha}| \le 2} \boldsymbol{h}_M^{\boldsymbol{\alpha}} |\boldsymbol{D}^{\boldsymbol{\alpha}} (u-q)|_{W_{1,p}(M)},
	\end{gather}
	which can be proven to hold true by setting $v\coloneqq \boldsymbol{D}^{\boldsymbol{\gamma}} (u-q)$ with $|\boldsymbol{\gamma}|=1$ in
	\begin{gather*}
		\|v\|_{L_p(M)} \le C \sum_{|\boldsymbol{\alpha}| \le 2} \boldsymbol{h}_M^{\boldsymbol{\alpha}} \|\boldsymbol{D}^{\boldsymbol{\alpha}} v\|_{L_p(M)}.
	\end{gather*}
	This is in turn the embedding $W_{2,p}(M) \hookrightarrow L_p(M)$ on the reference macro-element and appropriate scaling. Collecting \eqref{eq:tapproxtriangle}, \eqref{eq:tapprox2} and \eqref{eq:tapprox1} we arrive at
	\begin{gather*}
		|u- \tilde \Pi u|_{W_{1,p}(M)} \le C C_{M,p} \sum_{|\boldsymbol{\alpha}|\le 2} \boldsymbol{h}_M^{\boldsymbol{\alpha}} |\boldsymbol{D}^{\boldsymbol{\alpha}} (u-q)|_{W_{1,p}(\omega_M)} \le C C_{M,p} \sum_{|\boldsymbol{\alpha}| = 2} \boldsymbol{h}_M^{\boldsymbol{\alpha}} |\boldsymbol{D}^{\boldsymbol{\alpha}} u|_{W_{1,p}(\omega_M)},
	\end{gather*}
	due to the special choice of $q$ and Lemma \ref{lem:lemmapoly}.
\end{proof}

\begin{rem}\label{rem:ass1}
	The absence of abrupt changes in the mesh sizes leads not only to Assumption \ref{assump:ass1} always being satisfied but also to $C_{M,p} \le C$ in \eqref{eq:tapprox}, similar to the results in \cite{A99}. If on the contrary there are abrupt changes in the mesh sizes of arbitrary magnitude then \eqref{eq:tapprox} can become useless for $p < \infty$ --- an observation that was made in \cite{AR08}, as well.
\end{rem}

\begin{rem}\label{rem:tapprox}
	Inspecting the proofs of Lemma \ref{lem:stab} and Theorem \ref{thm:tapprox} one sees that under the same assumptions the approximation error estimate
	\begin{gather}\label{eq:lperr}
		\|u- \tilde \Pi u\|_{L_{p}(M)} \le C C_{M,p} \sum_{|\boldsymbol{\alpha}| = 3} \boldsymbol{h}_M^{\boldsymbol{\alpha}} \|\boldsymbol{D}^{\boldsymbol{\alpha}} u\|_{L_{p}(\omega_M)}
	\end{gather}
	holds true for $p>1$. In fact, the stability estimate
	\begin{align*}
		\| \Pi^r u\|_{L_p(M)} \le C C_{M,p} \sum_{|\boldsymbol{\alpha}|\le 3} \boldsymbol{h}_M^{\boldsymbol{\alpha}} \|\boldsymbol{D}^{\boldsymbol{\alpha}} u\|_{L_p(M)}
	 \end{align*}
	 can be established based on the embedding $W_{3,p}(\Lambda) \hookrightarrow C^1(\Lambda)$ (which holds true for $p\ge 2$ in two dimensions) on the reference macro-element and a scaling argument. Moreover, one can make use of $\|\psi_{i,j}\|_{L_\infty(M)} \le C \boldsymbol{h}_M^{(1,1)}$ for $(i,j) \in I_M$.
	 If one only has $u \in W_{2,\infty}(\omega_M)$ one can still obtain
	 \begin{gather*}
		\|u - \tilde \Pi u\|_{L_\infty(M)} \le C \sum_{|\boldsymbol{\alpha}| = 2} \boldsymbol{h}_M^{\boldsymbol{\alpha}} \|\boldsymbol{D}^{\boldsymbol{\alpha}} u\|_{L_{\infty}(\omega_M)}
	 \end{gather*}
	 by estimating \eqref{eq:aijfinal} directly.
\end{rem}

\begin{rem}
	Similarly to the situation in which the interpolation operator is defined by local functionals it is again important that \emph{polynomials are reproduced on larger entities}. While we demanded this property for macro-elements in the local setting we need it now on patches of macro-elements. This seems to be an underlaying principle.
\end{rem}

We now turn our attention to second order derivatives. Inspecting the arguments in Theorem \ref{thm:tapprox} for the possibility to prove $L_p$-bounds for second order derivatives of the approximation error, we see that stability of $\tilde \Pi$ is crucial.

It is possible to prove
\begin{gather*}
	\big\|\big(\tilde \Pi u\big)_{xy}\big\|_{L_p(M)} \le C C_{M,p} \sum_{|\boldsymbol{\alpha}| \le 1} \boldsymbol{h}_M^{\boldsymbol{\alpha}} \left\|\boldsymbol{D}^{\boldsymbol{\alpha}} u_{xy}\right\|_{L_p(\omega_M)}.
\end{gather*}
However, it is unclear how to obtain a similar estimate for the other second order derivatives. 
We therefore restrict the subsequent study to the case of an \emph{isotropic macro-element patch} $\omega_M$. These results will be useful in Section \ref{sec:Shishkin_macro}. There we want to apply $\tilde \Pi$ in the fine regions of a Shishkin mesh close to the corners of the domain where the mesh is uniform.

\begin{assump}\label{assump:ass2}
	Let $M\in \mathcal{M}_{\boldsymbol{h}}$ denote a macro-element such that the restriction of $\mathcal{M}_{\boldsymbol{h}}$ to the associated macro-element patch $\omega_M$ is locally uniform with mesh size $h_M$.
\end{assump}

\begin{thm}\label{thm:tapproxunif}
	Based on Assumption \ref{assump:ass2} the quasi-interpolation operator $\tilde \Pi$ satisfies the approximation error estimate
	\begin{gather}\label{eq:tildePiuniformthm}
		\big| u-\tilde \Pi u \big|_{W_{k,p}(M)} \le C h_M^{3-k} |u|_{W_{3,p}(\omega_M)},
	\end{gather}
	for $u \in W_{3,p}(\omega_M)\cap C^1(M)$ with $p \in [1,\infty]$ and $k \le 2$.
\end{thm}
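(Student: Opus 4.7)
The plan is to split the proof into the easy cases $k=0,1$ (which are handled by the earlier results once the uniformity hypothesis is inserted) and the genuinely new case $k=2$ (which requires $W_{2,p}$-stability of $\tilde \Pi$ on \emph{all} second derivatives, including $\partial_x^2$ and $\partial_y^2$). Under Assumption \ref{assump:ass2} every macro-element $M'\subset\omega_M$ has $\meas M' = \meas M$, so $C_{M,p}\le 1$, and $\boldsymbol{h}_M^{\boldsymbol{\alpha}} = h_M^{|\boldsymbol{\alpha}|}$. With these specialisations, the $L_p$-estimate \eqref{eq:lperr} of Remark \ref{rem:tapprox} collapses to \eqref{eq:tildePiuniformthm} at $k=0$, while \eqref{eq:tapprox} of Theorem \ref{thm:tapprox} collapses to it at $k=1$.

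For $k=2$ I would pick a polynomial $q \in P_2(\omega_M) \subset Q_2(\omega_M)$ via Lemma \ref{lem:lemmapoly} with $\ell=3$; Lemma \ref{lem:polypreserve} then gives $\tilde\Pi q = q$ on $M$. Writing $u - \tilde\Pi u = (u-q) - \tilde\Pi(u-q)$ and applying the triangle inequality yields
\begin{gather*}
 |u - \tilde\Pi u|_{W_{2,p}(M)} \le |u-q|_{W_{2,p}(M)} + |\tilde\Pi(u-q)|_{W_{2,p}(M)}.
\end{gather*}
The first summand is bounded directly by $C h_M |u|_{W_{3,p}(\omega_M)}$ via Lemma \ref{lem:lemmapoly}. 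The main obstacle is the second summand, because an anisotropic stability estimate for the pure derivatives $(\tilde\Pi u)_{xx}$ and $(\tilde\Pi u)_{yy}$ is not available with the techniques developed so far in the paper.

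Assumption \ref{assump:ass2} bypasses this obstacle by affine equivalence. Scaling $\omega_M$ by $h_M^{-1}$ produces, in view of Definition \ref{defn:rm}, only finitely many admissible reference patches $\hat \omega_M$; on each, the corresponding reference operator $\hat{\tilde\Pi}$ maps into the \emph{fixed finite-dimensional} subspace $\hat V \subset C^1(\hat M)$ of piecewise biquadratic $C^1$-functions. Norm equivalence on $\hat V$ yields $|w|_{W_{2,p}(\hat M)} \le C\bigl(|w|_{W_{1,p}(\hat M)} + \|w\|_{L_p(\hat M)}\bigr)$ for every $w \in \hat V$. Combining this with Lemma \ref{lem:stab} and the $L_p$-bound of Remark \ref{rem:tapprox} applied on the reference patch (where $h_M = 1$ and $C_{M,p}\le 1$; for $p=1$ the $L_p$-piece is supplied directly from the definition of $\tilde \Pi$ using the $C^1$ nodal values and the edge integrals defining the $a_{i,j}$), one obtains the reference stability estimate $|\hat{\tilde\Pi}\hat v|_{W_{2,p}(\hat M)} \le C \|\hat v\|_{W_{3,p}(\hat\omega_M)}$. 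Scaling back to $M,\omega_M$, applying this to $\hat v = \widehat{u-q}$, and inserting $|u-q|_{W_{j,p}(\omega_M)} \le C h_M^{3-j}|u|_{W_{3,p}(\omega_M)}$ for $j = 0,\dots,3$ from Lemma \ref{lem:lemmapoly}, the powers of $h_M$ telescope to $h_M^{3-2}=h_M$, which closes the estimate.
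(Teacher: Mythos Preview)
Your proposal is correct and follows essentially the same route as the paper: reduce to $k=2$ via Theorem~\ref{thm:tapprox} and Remark~\ref{rem:tapprox}, subtract the $P_2$ polynomial $q$ of Lemma~\ref{lem:lemmapoly}, and control $|\tilde\Pi(u-q)|_{W_{2,p}(M)}$ by passing to lower-order norms of $\tilde\Pi(u-q)$ and then invoking the already-established stability bounds. The only cosmetic difference is that the paper writes the key step as an inverse inequality $\|\tilde\Pi v\|_{W_{2,p}(M)}\le Ch_M^{-1}\|\tilde\Pi v\|_{W_{1,p}(M)}$ directly on $M$, whereas you phrase the same fact as norm equivalence on the finite-dimensional space $\hat V$ over a reference patch; after scaling these are identical.
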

\begin{proof}
Under Assumption \ref{assump:ass2} the estimates \eqref{eq:tapprox} and \eqref{eq:lperr} simplify to \eqref{eq:tildePiuniformthm} for $k \le 1$ and it remains to validate this estimate for $k=2$.

Let $v\in C^1(M)$ with $v_{xy}|_{\sigma_{ij}} \in L_1(\sigma_{i,j})$ for all $(i,j) \in I_M$ so that $\Pi v$ is well defined. By Assumption \ref{assump:ass2} and the fact that $\Pi v$ is piecewise biquadratic an inverse estimate yields
\begin{gather}\label{eq:tildepiinverse}
	\|\tilde \Pi v\|_{W_{2,p}(M)} \le C h_M^{-1} \|\tilde \Pi v\|_{W_{1,p}(M)}.
\end{gather}

We proceed as in Theorem \ref{thm:tapprox}. By Lemma \ref{lem:lemmapoly} with $\ell = 3$ there exits a unique polynomial $q \in P_2(\omega_M)$ such that
\begin{subequations}\label{eq:polyprob}
\begin{align}
	\sum_{k=0}^3 h_M^k \left| u - q \right|_{W_{k,p}(\omega_M)} &\le C h_M^3 | u |_{W_{3,p}(\omega_M)},\label{eq:polyprob0}\\
    \sum_{k=0}^2 h_M^k \left| u - q \right|_{W_{k+1,p}(\omega_M)} &\le C h_M^2 | u |_{W_{3,p}(\omega_M)}.\label{eq:polyprob1}
\end{align}
\end{subequations}
A triangle inequality implies
\begin{gather}\label{eq:approx2triangle}
	\big| \tilde \Pi u - u \big|_{W_{2,p}(M)} \le | u - q |_{W_{2,p}(M)} + \big| \tilde \Pi (q - u) \big|_{W_{2,p}(M)}.
\end{gather}
The first summand is easily bounded by \eqref{eq:polyprob0}. For the other one we use the inverse estimate \eqref{eq:tildepiinverse}, the stability estimates for low order derivatives of $\tilde \Pi$, see Lemma \ref{lem:stab} and Remark \ref{rem:tapprox}, and \eqref{eq:polyprob}:
\begin{align}\label{eq:approx2discrete}
	\big| \tilde \Pi (q - u) \big|_{W_{2,p}(M)} &\le C h_M^{-1} \big\| \tilde \Pi (q - u) \big\|_{W_{1,p}(M)} \le C h_M^{-1} \Big( \big| \tilde \Pi (q - u) \big|_{W_{1,p}(M)} + \big\| \tilde \Pi (q - u) \big\|_{L_{p}(M)} \Big)\notag\\
	& \le C h_M^{-1} \bigg( \sum_{k=0}^2 h_M^k |q-u|_{W_{k+1,p}(\omega_M)} + \sum_{k=0}^3 h_M^k |q-u|_{W_{k,p}(\omega_M)} \bigg)\\
	& \le C h_M | u |_{W_{3,p}(\omega_M)}. \notag
\end{align}
Collecting \eqref{eq:approx2triangle}, \eqref{eq:polyprob0} and \eqref{eq:approx2discrete} the result follows.
\end{proof}

\begin{rem}
For $p < \infty$ the constant $C_{M,p}$ in the estimates \eqref{eq:tapprox} and \eqref{eq:lperr} renders them useless on meshes of Shishkin type or any other mesh with abrupt changes in the mesh sizes. In this case $L_\infty$ estimates are desirable. For second order derivatives we were able to prove a result of classical type with Theorem \ref{thm:tapproxunif}. In order to prove anisotropic error estimates it might be necessary to specify additional rules for the choice of the macro-element edges $\sigma_{i,j}$ associated with the macro-element vertices $\boldsymbol{X}_{ij}$, $(i,j) \in I_M$.
Moreover, Theorem \ref{thm:tapprox} shows two things:
\begin{itemize}
\item Firstly, it is possible to design useful quasi-interpolation operators that are defined by a mix of local and non-local functionals. This is particularly true if the element considered is not of Lagrange type. Extending this idea one might use different entities $\sigma_{i,j}$ for every component of a quasi-interpolation operator.
\item Secondly, by using non-local functionals only for the coefficients of basis functions associated with higher order derivatives the resulting quasi-interpolation operators of Scott-Zhang type seem to be very flexible with respect to the choice of the entities $\sigma_{i,j}$. Note that in \cite{A99} derivatives of adaptations of the Scott-Zhang operator were only proven to obey anisotropic interpolation error estimates if the entities $\sigma_{i,j}$ were chosen all parallel.
\end{itemize}
\end{rem}

\subsection[Summary]{\texorpdfstring{Summary: anisotropic $C^1$ (quasi-)interpolation error estimates}{Summary: anisotropic C1 (quasi-)interpolation error estimates}}\label{subsec:summary_macro}

In this Section we want to summarize our results and those of \cite{CYM2009}. To the knowledge of the author these are the only sources of anisotropic (quasi-)interpolation error estimates for $C^1$ Hermite(-type) interpolation.
All estimates are valid on rectangular tensor product meshes such that the edges of an element $K$ are aligned with the coordinate axes.
In all estimates $C$ is a generic constant that does not depend on $u$ or the mesh.

The work \cite{CYM2009} addresses for $N\ge 1$ two $C^{N-1}$ Hermite interpolation operators $I_{12}$ and $I_{22}$ into the piecewise $Q_{2N-1}$ and $Q_{2N}$ functions, respectively. Its main results are the anisotropic error estimates
\begin{align*}
	|u-I_{12} u|_{N,K} &\le C \sum_{|\boldsymbol{\beta}|=N} \boldsymbol{h}_K^{\boldsymbol{\beta}} |\boldsymbol{D}^{\boldsymbol{\beta}} u|_{N,K},\\
	|u-I_{22} u|_{N,K} &\le C \sum_{|\boldsymbol{\beta}|=N+1} \boldsymbol{h}_K^{\boldsymbol{\beta}} |\boldsymbol{D}^{\boldsymbol{\beta}} u|_{N,K},
\end{align*}
for $u \in H^{2N}(K)$ and with $\boldsymbol{h}_K = (h_{1,K},h_{2,K})$ where $h_{i,K}$ is the size of $K$ in $x_i$-direction.

Inspecting their proofs for $N=2$ we see that there is a $C^1$ Hermite interpolation operator $I_{12}$ into the piecewise bicubic functions (more precisely the Bogner-Fox-Schmidt element space) such that
\begin{gather*}
	\left \| \boldsymbol{D}^{\boldsymbol{\gamma}} \left(  u - I_{12} u\right) \right \|_{0, K} \le C \sum_{|\boldsymbol{\alpha} |=4-|\boldsymbol{\gamma}|} \boldsymbol{h}_K^{\boldsymbol{\alpha}} \left\| \boldsymbol{D}^{\boldsymbol{\alpha}} \boldsymbol{D}^{\boldsymbol{\gamma}} u \right\|_{0,K},
\end{gather*}
for $|\boldsymbol{\gamma} |\le 2$ and $u \in H^4(K)$. We want to emphasize that this result originally obtained by \cite{CYM2009} can alternatively be proven using Apel's theory and our key observation that two dimensional divided differences may be used as associated functionals (cf.~Corollary \ref{cor:bicubic}). 

We refer to \cite{CYM2009} for a note on the three dimensional case.

In the case of piecewise biquadratic functions we extended the results of \cite{HHZ2011} to the anisotropic case using new results on macro-interpolation. If the mesh can be generated as a uniform refinement of a macro-element mesh $\mathcal{M}_{\boldsymbol{h}}$, then there is a $C^1$ Hermite interpolation operator $\Pi$ into the piecewise biquadratic functions such that (cf.~Corollary \ref{cor:respi})
	\begin{gather*}
		\|\boldsymbol{D}^{\boldsymbol{\gamma}} \left( u - \Pi u \right)\|_{0,M} \le C \left( \sum_{|\boldsymbol{\alpha}| = 4-|\boldsymbol{\gamma}|} \!\boldsymbol{h}_M^{\boldsymbol{\alpha}} \left| \boldsymbol{D}^{\boldsymbol{\alpha}} \boldsymbol{D}^{\boldsymbol{\gamma}} u \right|_{0,M} + \sum_{|\boldsymbol{\alpha}|=3-|\boldsymbol{\gamma}|} \!\boldsymbol{h}_M^{\boldsymbol{\alpha}} \left| \int_{M} \boldsymbol{D}^{\boldsymbol{\alpha}} \boldsymbol{D}^{\boldsymbol{\gamma}} u(x,y) \,\D x\D y \right| \right)
	\end{gather*}
on a macro-element $M \in \mathcal{M}_{\boldsymbol{h}}$ for a multi-index $\boldsymbol{\gamma}$ with $|\boldsymbol{\gamma}| \le 2$ and $u \in C^2(M)$ such that $\boldsymbol{D}^{\boldsymbol{\gamma}} u \in H^{4-|\boldsymbol{\gamma}|}(M)$.

In order to reduce the regularity required we use non-local information of the interpolant in order to define the coefficient of the basis function associated with the mixed second derivative, creating the quasi-interpolation operator $\tilde \Pi$.
For its analysis we need Assumption \ref{assump:ass1} to be satisfied.
Collecting the results of Theorem \ref{thm:tapprox}, Remark \ref{rem:tapprox} we summarize that for $p \in [2,\infty]$ and $m=0,1$ the error estimate
	\begin{gather}\label{eq:summtildepi}
		|u- \tilde \Pi u|_{W_{m,p}(M)} \le C C_{M,p} \sum_{|\boldsymbol{\alpha}| = 3-m} \boldsymbol{h}_M^{\boldsymbol{\alpha}} |\boldsymbol{D}^{\boldsymbol{\alpha}} u|_{W_{m,p}(\omega_M)}
	\end{gather}
holds true, provided $u \in W_{3,p}(\omega_M)$. Here
	\begin{gather*}
		C_{M,p} \coloneqq \left(\frac{\meas M}{\min_{T \in \mathcal{M},\,T \subset \omega_M} \meas T}\right)^{1/p} \ge 1
	\end{gather*}
	and $\omega_M$ is the associated macro-element patch $\omega_M$ around $M$, cf.~Definition \ref{defn:rm}.

In the case of a more regular mesh (more precisely: under Assumption \ref{assump:ass2}) the operator $\tilde \Pi$ satisfies error estimates of classical type even for second order derivatives, see Theorem \ref{thm:tapproxunif}. Note that the absence of abrupt changes in the mesh sizes implies the validity of Assumption \ref{assump:ass1} and a simplification of the estimates \eqref{eq:summtildepi} due to $C_{M,p} \le C$, cf.~Remark \ref{rem:ass1}.

It would be very interesting to check numerically if there is hope for the Girault-Scott operator of \cite[Section 4]{HHZ2011} to allow anisotropic interpolation error estimates given only some $W_{2,p}$ regularity of the function to be approximated. However, certain details in that paper are unclear --- especially the scaling of the true dual basis functions (given only as a brief note) is questionable.

\section{An anisotropic macro-element of tensor product type}\label{sec:anisomacro_macro}

In Section \ref{sec:univariate} we have seen 1D Hermite interpolation in the space of quadratic $C^1$ splines. The tensor product of this 1D macro-element with itself created a 2D macro-element and the induced interpolation operator $\Pi$ for which we were able to prove certain anisotropic interpolation error estimates. However, the usage of this operator on for instance a Shishkin mesh (where the direction of anisotropy and mesh sizes changes abruptly) does not lead to optimal results. The main reason for this failure is that the $C^1$ operators $\Pi$ or $\tilde \Pi$ do not satisfy certain $L_\infty$-stability estimates. Based on the usage of derivatives one has for instance on some macro-element $M \in \mathcal{M}_{\boldsymbol{h}}$ with sizes $\boldsymbol{h}_M$ that
\begin{gather*}
	\|\Pi v\|_{L_\infty(M)} \le C \left(\sum_{|\boldsymbol{\alpha}| \le 1} \boldsymbol{h}_M^{\boldsymbol{\alpha}} \| \boldsymbol{D}^{\boldsymbol{\alpha}} v\|_{L_\infty(M)} + \boldsymbol{h}_M^{(1,1)} \| \boldsymbol{D}^{(1,1)} v\|_{L_\infty(M)}\right),
\end{gather*}
holds true, i.e.~$L_\infty$ norms of derivatives appear on the right hand side. Hence, if one wants to bound the error in the interior with large elements one can no longer use that the interpolant is small there but has to demand that also derivatives of the interpolant are small. This is however not true on a Shishkin mesh as already mentioned in the introduction. In order to remedy this problem we consider the following anisotropic macro-element.

\begin{figure}
	\centering
	\includegraphics[height=5.5cm]{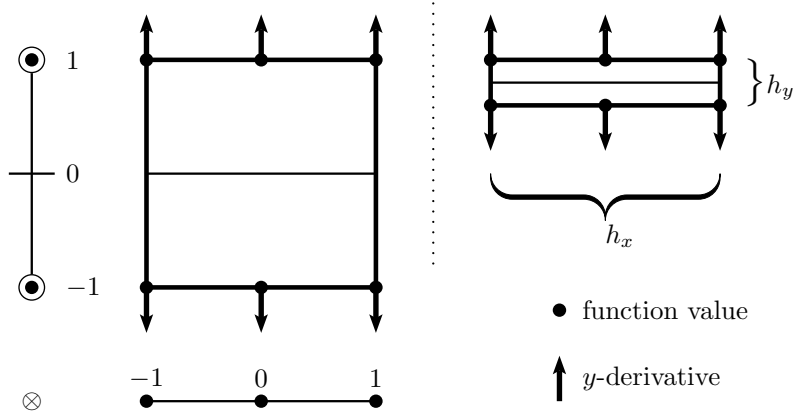}
	\caption{Degrees of freedom of the anisotropic macro-element on the reference macro-element $\hat M$ (left) and on some anisotropic macro in the world domain (right).}
	\label{fig:aniso-macro-element}
\end{figure}

We form a macro of two rectangles and use as degrees of freedom the function value and the value of a certain first derivate in six points along the boundary of the macro (cf.~Figure \ref{fig:aniso-macro-element}). Note that this macro-element can be considered as the tensor product of one dimensional $C^1-P_2$ macro-interpolation and $P_2$ Lagrange interpolation. Hence, we \emph{leave the realm of $C^1$ macro-elements} but preserve the property of a continuous normal derivative across some macro-element edges. This will be vital in the next section.

More precisely, assuming that, as illustrated in Figure \ref{fig:aniso-macro-element}, the reference macro-element $\hat M\coloneqq\{[-1,1]\times[-1,0],[-1,1]\times[0,1]\}$ over the reference domain $\Lambda = [-1,1]^2$ is mapped to an anisotropic one for which the aspect ratio $h_x/h_y$ is very large we use quadratic $C^1$ splines in $y$ direction (small side) and $P_2$ in $x$ direction (large side). This space $S(\hat M)$ is 12 dimensional and from \eqref{eq:newtonSpline} and
\begin{gather*}
	p(x) = p[-1] + p[-1,0](x+1) + p[-1,0,1] (x+1) x\quad \forall p \in P_2([-1,1]),
\end{gather*}
we can obtain the representation
\begin{gather}
\begin{aligned}\label{eq:sanisomacro}
	s(x,y) &= \sum_{j=1}^3 \left(F_{1j}(s)(y+1)^{j-1} + F_{2j}(s) (x+1)(y+1)^{j-1} + F_{3j}(s) (x+1)x(y+1)^{j-1}\right)\\
	&\quad+ 4 \big( F_{14}(s) + F_{24}(s) (x+1) + F_{34}(s) (x+1)x \big) \hat \psi_1(y)\quad \forall s \in S(\hat M).
\end{aligned}
\end{gather}
By $\Pi^x$ we denote the macro-element interpolation operator such that the roles of the sizes $h_x$ and $h_y$ of a macro-element $M$ are interchanged, i.e.~$h_x \gg h_y$.

The functionals $F_{ij}$ are again defined as two dimensional divided differences:
\begin{gather*}
	F_{ij}(s) \coloneqq s[m_i;n_j]\quad \text{with}\quad m_i = \left \{
	\begin{alignedat}{2}
	&{-1}&\quad &\text{for $i=1$,}\\
	&{-1},{0}&\quad &\text{for $i=2$,}\\
	&{-1},{0},1&\quad &\text{for $i=3$,}\\
	\end{alignedat}\right.
	\quad \text{and} \quad n_j = \left \{
	\begin{alignedat}{2}
	&{-1}&\quad &\text{for $j=1$,}\\
	&{-1},{-1}&\quad &\text{for $j=2$,}\\
	&{-1},{-1},1&\quad &\text{for $j=3$,}\\
	&{-1},{-1},1,1&\quad &\text{for $j=4$.}\\
	\end{alignedat}\right.
\end{gather*}

It is easy to establish the $H^1$-conformity of this macro-element. Moreover, we find that the $y$-derivative along the edge $y=\pm 1$ of $\Lambda$ can be expressed by
\begin{align*}
	\frac{\partial s}{\partial y}(x,\pm 1) &= \frac{\partial s}{\partial y}(0,\pm1) + \frac{1}{2}\left( \frac{\partial s}{\partial y}(1,\pm1) - \frac{\partial s}{\partial y}(-1,\pm1) \right) x \\
	&\quad + \frac{1}{2}\left( \frac{\partial s}{\partial y}(-1,\pm1) - 2 \frac{\partial s}{\partial y}(0,\pm1) + \frac{\partial s}{\partial y}(1,\pm1) \right) x^2.
\end{align*}
Hence, if two such macro-elements are combined in $y$-direction the normal derivative along the common edge parallel to the $x$-axis (long side) is continuous. Clearly, this macro-element induces another interpolation operator $\hat\Pi^y : C^1(\Lambda) \rightarrow S(\hat M)$:
\begin{gather}\label{eq:represpiy}
	\hat\Pi^y u(x,y) \coloneqq \sum_{i\in\{-1,0,1\}} \sum_{j\in\{-1,1\}} \left( u(i,j) \hat \ell_i(x) \hat \varphi_j(y) + \frac{\partial u}{\partial y}(i,j) \hat \ell_i(x) \hat \psi_j(y) \right).
\end{gather}
Here $\hat \ell_i \in P_2[-1,1]$ denotes the quadratic Lagrange basis function that corresponds to the node $i \in \{-1,0,1\}$, i.e.
\begin{gather*}
	\hat\ell_{-1} \coloneqq x(x-1)/2,\quad \hat\ell_{0} \coloneqq -(x+1)(x-1),\quad \hat\ell_{+1} \coloneqq (x+1)x/2.
\end{gather*}

Let $M=[x_0-h_x/2, x_0+h_x/2]\times[y_0-h_y/2, y_0+h_y/2$ denote a macro-element.
From the representation \eqref{eq:represpiy} and the affine reference mapping $F_M:[-1,1] \to M$:
\begin{gather}\label{eq:affinmacro}
	x = x_0 + h_x \hat x,\qquad y = y_0 + h_y \hat y,
\end{gather}
it is easy to deduce for the interpolation operator $\Pi^y u \coloneqq \hat \Pi^y \hat u \circ F_M^{-1}$ with $\hat u \coloneqq u \circ F_M$ on the macro-element $M$ in the world domain the stability property
\begin{align}\label{eq:Piainfstab}
	\| \Pi^y u \|_{L_\infty(M)} \le C \left( \| u \|_{L_\infty(M)} + h_y \left\| \frac{\partial u}{\partial y} \right\|_{L_\infty(M)} \right).
\end{align}

\begin{rem}
	Note that by construction $h_y$ is the length of the small side of the macro-element $M$ in the world domain. Hence, the first derivative in \eqref{eq:Piainfstab} is combined with a small multiplier.
\end{rem}

Next we study the approximation properties of this interpolation operator.

\begin{thm}
	For $u \in H^3(\Lambda)$ and a multi-index $\boldsymbol{\gamma}$ with $|\boldsymbol{\gamma}|\le 2$ we have the estimates
	\begin{subequations}
	\begin{align}
		\| \boldsymbol{D}^{\boldsymbol{\gamma}} (u - \Pi^y u) \|_0 &\le C \left| \boldsymbol{D}^{\boldsymbol{\gamma}} u \right|_{3-|\boldsymbol{\gamma}|} \qquad \text{for $\boldsymbol{\gamma} \neq (2,0)$} \label{eq:Pianxx}\\
		\| (u - \Pi^y u)_{xx} \|_0 &\le C \big( |u_{xx}|_1 + |u_x|_2 \big) \label{eq:Piaxx}	
	\end{align}	
	\end{subequations}
\end{thm}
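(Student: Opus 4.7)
The plan is to prove both estimates by an application of Lemma \ref{lem:lemma3}, adapted from the template of Subsection \ref{subsec:c1q2}: the two-dimensional divided differences $F_{ij}(u) \coloneqq u[m_i;n_j]$ that appear as Newton coefficients in \eqref{eq:sanisomacro} will serve as associated functionals. The invariance $F_{ij}(\Pi^y u) = F_{ij}(u)$ is built into \eqref{eq:sanisomacro}, and differentiating \eqref{eq:sanisomacro} by $\boldsymbol{D}^{\boldsymbol{\gamma}}$ shows that the subset indexed by $\{(i,j) : \gamma_1 < i \le 3,\;\gamma_2 < j \le 4\}$ has cardinality $(3-\gamma_1)(4-\gamma_2) = \dim \boldsymbol{D}^{\boldsymbol{\gamma}} S(\hat M)$ and defines a norm on $\boldsymbol{D}^{\boldsymbol{\gamma}} S(\hat M)$.

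For \eqref{eq:Pianxx} and any $\boldsymbol{\gamma} \ne (2,0)$ with $|\boldsymbol{\gamma}| \le 2$, I will apply Lemma \ref{lem:lemma3} with $\boldsymbol{P} = \{\boldsymbol{0}\}$ and $\boldsymbol{Q} = \{\boldsymbol{\alpha} : |\boldsymbol{\alpha}| \le 2-|\boldsymbol{\gamma}|\}$. A case-by-case check shows that $\boldsymbol{Q} + \boldsymbol{\gamma}$ is always contained in the tensor-product span of monomials $x^a y^b$ with $a,b \le 2$; since $\Pi^y$ reproduces all such monomials, the polynomial $q \in (\boldsymbol{Q}+\boldsymbol{\gamma})(\Lambda)$ from Lemma \ref{lem:lemma1} satisfies $\Pi^y q = q$ and the extra error component in \eqref{eq:interpolestim} vanishes. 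Meanwhile $(\boldsymbol{\overline Q} + \boldsymbol{\gamma}) \setminus (\boldsymbol{Q}+\boldsymbol{\gamma})$ consists precisely of multi-indices $\boldsymbol{\gamma} + \boldsymbol{\beta}$ with $|\boldsymbol{\beta}| = 3-|\boldsymbol{\gamma}|$, producing the seminorm $|\boldsymbol{D}^{\boldsymbol{\gamma}} u|_{3-|\boldsymbol{\gamma}|}$ on the right-hand side of \eqref{eq:Pianxx}.

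For \eqref{eq:Piaxx} the naive choice $\boldsymbol{\gamma} = (2,0)$, $\boldsymbol{Q} = \{\boldsymbol{0}\}$ will not work, because the functional $F_{32}(u) = \int K_x(x) u_{xxy}(x,-1)\,\D x$ cannot be written as a bounded functional of $v = u_{xx} \in H^1(\Lambda)$: the trace of $v_y$ on $\{y = -1\}$ is undefined, and integration by parts in $x$ would trade $u_{xxy}$ for $u_{xy}$, which is not recoverable from $u_{xx}$. I will circumvent this by applying Lemma \ref{lem:lemma3} with the shifted parameters $\boldsymbol{\gamma} = (1,0)$, $\boldsymbol{P} = \{(1,0)\}$, $\boldsymbol{Q} = \{(0,0),(1,0),(0,1)\}$. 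Then $\boldsymbol{P} + \boldsymbol{\gamma} = \{(2,0)\}$ delivers the desired left-hand side $\|(u-\Pi^y u)_{xx}\|_0$; $\boldsymbol{Q} + \boldsymbol{\gamma} = \{(1,0),(2,0),(1,1)\}$ corresponds to the monomials $x$, $x^2$, $xy$---all reproduced by $\Pi^y$---and $(\boldsymbol{\overline Q}+\boldsymbol{\gamma}) \setminus (\boldsymbol{Q}+\boldsymbol{\gamma}) = \{(3,0),(2,1),(1,2)\}$ yields exactly $|u_x|_2$, which majorises $|u_{xx}|_1$; the bound in \eqref{eq:Piaxx} follows.

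The main technical obstacle---to be checked for every $F_{ij}$ but routine given the template of Subsection \ref{subsec:c1q2}---will be reinterpreting each $F_{ij}$ as a bounded linear functional on the appropriate $H^{\boldsymbol{\overline Q}}_2(\Lambda)$ acting on $\boldsymbol{D}^{\boldsymbol{\gamma}} u$. The key device is a bivariate Peano form: in $x$ the divided difference at the three distinct nodes $-1,0,1$ becomes $u(\cdot,y)[-1,0,1] = \int_{-1}^1 K_x(x) u_{xx}(x,y)\,\D x$ with the hat-shaped quadratic B-spline kernel $K_x$ vanishing at $\pm 1$, while in $y$ the formula \eqref{eq:peanoform} already proved in the paper applies verbatim. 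The vanishing of $K_x$ at $\pm 1$ is what makes integration by parts in $x$ freely available, allowing one to trade a derivative from $u$ to the kernel whenever needed; this turns, for example, $F_{32}^{(1,0)}(v) = \int K_x v_{xy}(\cdot,-1)\,\D x$ into $-\int K_x'(x) v_y(x,-1)\,\D x$, which is bounded on $H^2(\Lambda)$ by a standard trace theorem. As in \eqref{eq:peanoform}, each Peano identity must additionally be verified directly on the basis functions of $S(\hat M)$ so that the invariance condition \eqref{eq:functionals} of Lemma \ref{lem:lemma3} continues to hold for the spline interpolant.
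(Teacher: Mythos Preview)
Your argument for \eqref{eq:Pianxx} is essentially the paper's: the same divided-difference functionals, the same Peano-kernel rewriting, the same observation that $Q_2$ monomials are reproduced so the extra term in \eqref{eq:interpolestim} drops. (Minor slip: the Peano kernel $K_x$ for $u[-1,0,1]$ is the piecewise \emph{linear} hat $(1-|x|)/2$, not quadratic; the property you actually use---$K_x(\pm 1)=0$---is unaffected.)

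For \eqref{eq:Piaxx} you take a genuinely different and somewhat cleaner route. The paper abandons Lemma~\ref{lem:lemma3} for this case: it picks a polynomial $q$ via Lemma~\ref{lem:lemma1} with the index set $\{(2,0),(1,1),(1,0)\}$, invokes Lemma~\ref{lem:lemma2} twice to control $\|(u-q)_{xx}\|_1$ and $\|(u-q)_x\|_2$, then combines a triangle inequality, the inverse estimate $\|s_{xx}\|_0 \le C\|s_x\|_0$ on $S(\hat M)$, and the stability $\|(\Pi^y v)_x\|_0 \le C\|v_x\|_2$ already proved in the $(1,0)$ case. Your idea---stay inside Lemma~\ref{lem:lemma3} but shift the bookkeeping to $\boldsymbol{\gamma}=(1,0)$, $\boldsymbol{P}=\{(1,0)\}$, $\boldsymbol{Q}=P_1$---reuses verbatim the associated functionals $F_{ij}^{(1,0)}\in(H^2(\Lambda))'$ from the first part, avoids the inverse inequality entirely, and delivers the slightly sharper bound $\|(u-\Pi^y u)_{xx}\|_0 \le C|u_x|_2$ directly. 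Note that Lemma~\ref{lem:lemma3} does not require $\boldsymbol{0}\in\boldsymbol{P}$; the finite-dimensional step in its proof only needs the seminorm $\|\cdot\|_{\boldsymbol{P},p}$ to be dominated by the norm $\sum|F_i(\cdot)|$ on $\boldsymbol{D}^{\boldsymbol{\gamma}}S(\hat M)$, which is automatic. The paper's hands-on argument has the advantage of making explicit why the $(2,0)$ case is special (loss of one order of integration by parts in $x$), while yours shows that no new machinery is actually required.
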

\begin{proof}
	We shall apply Lemma \ref{lem:lemma3} in order to prove \eqref{eq:Pianxx}. Thus, we set $\boldsymbol{P} \coloneqq \boldsymbol{Q} \coloneqq P_{2-|\boldsymbol{\gamma}|}$. By a direct calculation similarly to \eqref{eq:invarprop} we observe that the additional error component involving the polynomial $q \in P_2(\Lambda)$ vanishes since $\Pi^y v = v$ holds true for any function $v \in Q_2(\Lambda) \supset P_2(\Lambda)$. It remains to specify the associate functionals $F_{ij}^{\boldsymbol{\gamma}}$ according to \eqref{eq:functionals} for a given differential operator $\boldsymbol{D}^{\boldsymbol{\gamma}}$ with $|\boldsymbol{\gamma}|\le 2$. We use the same techniques as in Theorem \ref{thm:approxPi}. Firstly, it can be seen by applying the differential operator $\boldsymbol{D}^{\boldsymbol{\gamma}}$ to the representation \eqref{eq:sanisomacro} of an element $s \in S(\hat M)$ that $\boldsymbol{D}^{\boldsymbol{\gamma}} S(\hat M)$ can be normed by
	\begin{gather*}
		\sum_{(i,j) \in J_{\boldsymbol{\gamma}}} |F_{ij}(\cdot)|\qquad \text{with}\quad  J_{\boldsymbol{\gamma}}\coloneqq \{(i,j)\,:\, i=\gamma_1+1,\dots,3,\; j=\gamma_2+1,\dots,4\}.
	\end{gather*}
	Clearly, $F_{ij}(u) = F_{ij}(\Pi^y u)$ for all $i \in \{1,2,3\}$ and $j \in \{1,2,3,4\}$ because the divided differences are linear combinations of the interpolation data $\{u(k,\ell),u_y(k,\ell)\}_{k\in \{-1,0,1\},\ell \in \{-1,1\}}$.
	The associated functionals $F_{ij}^{\boldsymbol{\gamma}}$ for $(i,j) \in J_{\boldsymbol{\gamma}}$ are listed in Table \ref{tab:assfuncPia}. Using Sobolev embeddings like in the proof of Theorem \ref{thm:approxPi} it is easy to check that $F_{ij}^{\boldsymbol{\gamma}} \in \big(H_{3-|\boldsymbol{\gamma}|}(\Lambda)\big)'$. Moreover, 
	\begin{gather*}
		F_{ij}(u) = F_{ij}^{\boldsymbol{\gamma}} (\boldsymbol{D}^{\boldsymbol{\gamma}} u)\quad \text{and}\quad F_{ij}(\Pi^y u) = F_{ij}^{\boldsymbol{\gamma}} (\boldsymbol{D}^{\boldsymbol{\gamma}} \Pi^y u)
	\end{gather*}	
	for $(i,j) \in J_{\boldsymbol{\gamma}}$. The first identity follows from the techniques in the proof of Theorem \ref{thm:approxPi}, especially \eqref{eq:peanoform}. A simple computation for each basis function in $S(\hat M)$ shows the second identity, due to the linearity of $F_{ij}$ and $F_{ij}^{\boldsymbol{\gamma}}$. Hence, indeed $F_{ij}^{\boldsymbol{\gamma}} (\boldsymbol{D}^{\boldsymbol{\gamma}} \Pi^y u) = F_{ij}^{\boldsymbol{\gamma}} (\boldsymbol{D}^{\boldsymbol{\gamma}} u)$. 
We shall demonstrate this procedure for $F_{33}$. A calculation gives
\begin{align}
	F_{33}(u) &= u[-1,0,1;-1,-1,1] = \frac{1}{2} \big( u(-1,\cdot)[-1,-1,1] - 2 u(0,\cdot)[-1,-1,1] + u(1,\cdot)[-1,-1,1]\big)\notag\\
	&= \frac{1}{2} \int_{-1}^1 s_1(y) \big( u_{yy}(-1,y) - 2 u_{yy}(0,y) + u_{yy}(1,y)\big)\,\D y, \label{eq:F33}
\end{align}
where we used \eqref{eq:peanoform} with $s_1(y) = (1-y)/4$ and from which $F_{33}^{(0,1)}$ and $F_{33}^{(0,2)}$ can be deduced. Moreover, we may rewrite this identity to obtain
\begin{gather*}
	F_{33}(u) = \frac{1}{2} \int_{-1}^1 s_1(y) \left( \int_0^1 u_{xyy}(x,y) \,\D x - \int_{-1}^0 u_{xyy}(x,y) \,\D x\right)\,\D y.
\end{gather*}
A reinterpretation of this equation according to $F_{33}(u) = F_{33}^{\boldsymbol{\gamma}} (\boldsymbol{D}^{\boldsymbol{\gamma}} u)$ gives $F_{33}^{(1,0)}$ and $F_{33}^{(1,1)}$. A computation shows $F_{33} (s) = F_{33}^{\boldsymbol{\gamma}} (\boldsymbol{D}^{\boldsymbol{\gamma}} s)$ for all $s \in S(\hat M)$ and $|\boldsymbol{\gamma}| \le 2$, $\boldsymbol{\gamma} \neq (2,0)$. Hence, the estimate \eqref{eq:Pianxx} is proven.

For $\boldsymbol{\gamma} = (2,0)$ it appears impossible to provide the associated functionals by the above technique. Consider for instance the divided difference $F_{33}$.  Using Taylor expansion it is possible to rewrite the equation \eqref{eq:F33} to
\begin{gather*}
	F_{33}(u) = \frac{1}{2} \int_{-1}^1 s_1(y) \left( \int_{-1}^0 (1+x) u_{xxyy}(x,y)\,\D x + \int_{0}^1 (1-x) u_{xxyy}(x,y)\,\D x\right)\,\D y.
\end{gather*}
This however comes at the price of demanding higher regularity. Clearly, we have to approach this problem differently.
Let $\boldsymbol{P}\coloneqq\{(2,0),(1,1),(1,0)\}$ and $q \in \boldsymbol{P}(\Lambda)$ denote the polynomial with
\begin{gather*}
	\int_{\Lambda} \boldsymbol{D}^{\boldsymbol{\alpha}} (u-q)\,\D \boldsymbol{x} = 0\qquad \forall \boldsymbol{\alpha} \in \boldsymbol{P}.
\end{gather*}
By Lemma \ref{lem:lemma1} the polynomial $q$ exits and is unique. From Lemma \ref{lem:lemma2} we can deduce by setting $v \coloneqq (u-q)_{xx}$ that
\begin{gather}\label{eq:dxxh1part}
	\|v\|_1 = \|(u-q)_{xx}\|_1 \le C |(u-q)_{xx}|_1 = C |u_{xx}|_1,
\end{gather} 
since $\int_{\Lambda} v \,\D \boldsymbol{x} = \int_{\Lambda} (u-q)_{xx} \,\D \boldsymbol{x} = 0$. 
Similarly, Lemma \ref{lem:lemma2} implies that for $v \coloneqq (u-q)_x$ we find
\begin{gather}\label{eq:dxh2part}
	\|v\|_2 = \|(u-q)_x\|_2 \le C |(u-q)_x|_2 = C |u_x|_2,
\end{gather} 
based on 
\begin{gather*}
	\int_{\Lambda} \boldsymbol{D}^{\boldsymbol{\alpha}} v \,\D \boldsymbol{x}= 0\qquad \forall\, |\boldsymbol{\alpha}| \le 1
	\quad \Leftrightarrow \quad \int_{\Lambda} \boldsymbol{D}^{\boldsymbol{\alpha}} (u-q)_x \,\D x= 0\qquad \forall\, |\boldsymbol{\alpha}| \le 1.
\end{gather*}
Next from \eqref{eq:Pianxx} for $\boldsymbol{\gamma} = (1,0)$ we obtain the following stability estimate
\begin{gather}\label{eq:piastab}
	\big\|\big(\Pi^y v\big)_x\big\|_0 \le \|v_x\|_0 + \big\|\big(v - \Pi^y v\big)_x\big\|_0 \le C \|v_x\|_2.
\end{gather}
A triangle inequality implies due to $q = \Pi^y q$ that
\begin{gather}\label{eq:piaxxtri}
	\big\|\big(u-\Pi^y u\big)_{xx}\big\|_0 \le \|(u-q)_{xx}\|_0 + \big\|\big(\Pi^y(q-u)\big)_{xx}\big\|_0.
\end{gather}
The first summand on the right hand side of \eqref{eq:piaxxtri} is estimated using \eqref{eq:dxxh1part}, while for the other one we use the inverse estimate
\begin{gather*}
	\|s_{xx}\|_0 \le C \|s_x\|_0\quad \forall s \in S,
\end{gather*}
which is easily verified in the four dimensional space $\boldsymbol{D}^{(2,0)} S(\hat M)$ over the reference macro-element. In fact, the optimal constant in this estimate is given by $C = \sqrt{3}$. Hence, by \eqref{eq:piaxxtri},
\begin{gather*}
	\big\|\big(u-\Pi^y u\big)_{xx}\big\|_0 \le C \big( |u_{xx}|_1 + \big\| \big(\Pi^y(q-u)\big)_x\big\|_0 \big).
\end{gather*}
We finish the proof of \eqref{eq:Piaxx} by using \eqref{eq:piastab} for $v=q-u$ and \eqref{eq:dxh2part}.
\end{proof}

\begin{table}\label{tab:assfuncPia}
	\centering
	{ 	
	\begin{tabular}{ccl}
		\toprule
		$\boldsymbol{\gamma}$ & $\dim \boldsymbol{D}^{\boldsymbol{\gamma}} S(\hat M)$ & associate functionals\\
		\midrule
		   $(0,0)$ & 12 & $F_{ij}^{(0,0)} \coloneqq F_{ij} \quad  i \in \{1,2,3\} \text{ and } j \in \{1,2,3,4\}$\\
		\midrule
		   $(1,0)$ & 8  & 
		   $\begin{aligned}
		   F_{2\ell}^{(1,0)}(v) &\coloneqq \int_{-1}^0 \frac{\partial^{\ell-1} v}{\partial y^{\ell-1}}(x,-1)\,\D x,\quad \ell\in\{1,2\}\\
		   F_{2k}^{(1,0)}(v) &\coloneqq \int_{-1}^1 \int_{-1}^0 s_{k-2}(y) v_{yy}(x,y)\,\D x\D y,\quad k\in\{3,4\}\\
		   F_{3\ell}^{(1,0)}(v) &\coloneqq \frac{1}{2} \int_{0}^1 \frac{\partial^{\ell-1} v}{\partial y^{\ell-1}}(x,-1)\,\D x - \frac{1}{2} \int_{-1}^0 \frac{\partial^{\ell-1} v}{\partial y^{\ell-1}}(x,-1)\,\D x\\
		   F_{3k}^{(1,0)}(v) &\coloneqq \frac{1}{2} \int_{-1}^1 s_{k-2}(y) \left( \int_{0}^1 v_{yy}(x,y)\,\D x - \int_{-1}^0 v_{yy}(x,y)\,\D x\right)\D y\\
		   \end{aligned}$\\
		 \midrule
		   $(0,1)$ & 9  & 
		   $\begin{aligned}
		   F_{12}^{(0,1)}(v) &\coloneqq v(-1,-1)\\
		   F_{1k}^{(0,1)}(v) &\coloneqq \int_{-1}^1 s_{k-2}(y) v_y(-1,y) \,\D y,\quad k\in\{3,4\}\\
		   F_{22}^{(0,1)}(v) &\coloneqq v(0,-1) - v(-1,-1)\\
		   F_{2k}^{(0,1)}(v) &\coloneqq \int_{-1}^1 \int_{-1}^0 s_{k-2}(y) v_{xy}(x,y) \,\D x \D y,\quad k\in\{3,4\}\\
		   F_{32}^{(0,1)}(v) &\coloneqq \frac{1}{2} \big( v(-1,-1) -2 v(0,-1) + v(1,-1) \big)\\
		   F_{3k}^{(0,1)}(v) &\coloneqq \frac{1}{2} \int_{-1}^1 s_{k-2}(y) \big( v_{y}(-1,y) - 2 v_{y}(0,y) + v_{y}(1,y) \big)\,\D y\\
		   \end{aligned}$\\
		 \midrule
		   $(1,1)$ & 6  &
		   $\begin{aligned}
		   F_{22}^{(1,1)}(v) &\coloneqq \int_{-1}^0 v(x,-1)\,\D x\\
		   F_{2k}^{(1,1)}(v) &\coloneqq \int_{-1}^1 s_{k-2}(y) \int_{-1}^0 v_y(x,y)\,\D x\D y,\quad k\in\{3,4\}\\\
		   F_{32}^{(1,1)}(v) &\coloneqq \frac{1}{2} \int_{0}^1 v(x,-1)\,\D x - \frac{1}{2} \int_{-1}^0 v(x,-1)\,\D x\\
		   F_{3k}^{(1,1)}(v) &\coloneqq \frac{1}{2} \int_{-1}^1 s_{k-2}(y) \left(\int_{0}^1 v_y(x,y)\,\D x - \int_{-1}^0 v_y(x,y)\,\D x \right)\D y\\
		   \end{aligned}$\\
		 \midrule
		   $(0,2)$ & 6  &
		   $\begin{aligned}
		   F_{1k}^{(0,2)}(v) &\coloneqq \int_{-1}^1 s_{k-2}(y) v(-1,y)\,\D y,\quad k\in\{3,4\}\\
		   F_{2k}^{(0,2)}(v) &\coloneqq \int_{-1}^1 s_{k-2}(y) \int_{-1}^0 v_x(x,y)\,\D x\D y,\quad k\in\{3,4\}\\
		   F_{3k}^{(0,2)}(v) &\coloneqq \frac{1}{2} \int_{-1}^1 s_{k-2}(y) \big(v(-1,y) - 2 v(0,y) + v(1,y) \big)\D y\\
		   \end{aligned}$\\
		\bottomrule
	\end{tabular}
	}
	\caption{Associated functionals $F_{i,j}^{\boldsymbol{\gamma}}$ for the operator $\Pi^y$ over $\hat M$ with respect to $\boldsymbol{D}^{\boldsymbol{\gamma}}$ and $s_1(y)=(1-y)/4$, $s_2(y) = y/4$.}
\end{table}

Using affine equivalence (cf.~\eqref{eq:affinmacro} and the proof of Theorem \ref{thm:Pired}) we obtain on a macro-element $M$ in the world domain the following result.

\begin{cor}
	For $u \in H^3(M)$ and a multi-index $\boldsymbol{\gamma}$ with $|\boldsymbol{\gamma}|\le 2$ we have the estimates
	\begin{subequations}
	\begin{gather}
		\| \boldsymbol{D}^{\boldsymbol{\gamma}} (u - \Pi^y u) \|_{0,M} \le C \sum_{|\boldsymbol{\alpha}|=3-|\boldsymbol{\gamma}|} \boldsymbol{h}_M^{\boldsymbol{\alpha}} \left\| \boldsymbol{D}^{\boldsymbol{\alpha}+\boldsymbol{\gamma}} u \right\|_{0,M} \qquad \text{for $\boldsymbol{\gamma} \neq (2,0)$} \label{eq:PianxxM}\\
		\big\| \big(u - \Pi^y u\big)_{xx} \big\|_{0,M} \le C \bigg( \sum_{|\boldsymbol{\alpha}|=1} \boldsymbol{h}_M^{\boldsymbol{\alpha}} \|\boldsymbol{D}^{\boldsymbol{\alpha}} u_{xx}\|_{0,M} + \sum_{|\boldsymbol{\alpha}|=2} \frac{\boldsymbol{h}_M^{\boldsymbol{\alpha}}}{h_x} \|\boldsymbol{D}^{\boldsymbol{\alpha}} u_x\|_{0,M} \bigg) \label{eq:PiaxxM}	
	\end{gather}	
	\end{subequations}
\end{cor}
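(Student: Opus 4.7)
The plan is to transport the reference estimates \eqref{eq:Pianxx} and \eqref{eq:Piaxx} to the world domain via the affine reference mapping $F_M$ from \eqref{eq:affinmacro}, repeating exactly the scaling argument already carried out in the proof of Theorem \ref{thm:Pired}. Setting $\hat u \coloneqq u \circ F_M$, I would first record the two elementary relations that drive everything: the chain-rule identity $\boldsymbol{\hat D}^{\boldsymbol{\beta}} \hat u = \boldsymbol{h}_M^{\boldsymbol{\beta}} (\boldsymbol{D}^{\boldsymbol{\beta}} u) \circ F_M$ (up to the fixed dilation factor from $F_M$, absorbed into the generic constant $C$), together with the change-of-variables formula $\|\boldsymbol{\hat D}^{\boldsymbol{\beta}} \hat f\|_{0,\Lambda} = (h_x h_y)^{-1/2} \boldsymbol{h}_M^{\boldsymbol{\beta}} \|\boldsymbol{D}^{\boldsymbol{\beta}} f\|_{0,M}$.

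For the case $\boldsymbol{\gamma} \neq (2,0)$ I would simply apply \eqref{eq:Pianxx} to $\hat u$, rewrite each norm in terms of $u$ on $M$ via the two formulas above, and cancel the common factor $(h_x h_y)^{-1/2} \boldsymbol{h}_M^{\boldsymbol{\gamma}}$ that appears on both sides. The remaining weights $\boldsymbol{h}_M^{\boldsymbol{\alpha}}$ with $|\boldsymbol{\alpha}| = 3-|\boldsymbol{\gamma}|$ on the right give immediately \eqref{eq:PianxxM}.

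For $\boldsymbol{\gamma} = (2,0)$ the same procedure is followed starting from \eqref{eq:Piaxx}, but now the two summands on the right scale asymmetrically and this is where I would be careful. The seminorm $|\hat u_{\hat x\hat x}|_1$ on $\Lambda$ transforms with the factor $(h_x h_y)^{-1/2} h_x^2$ further weighted by $\boldsymbol{h}_M^{\boldsymbol{\alpha}}$ with $|\boldsymbol{\alpha}|=1$, matching exactly the factor $(h_x h_y)^{-1/2} h_x^2$ picked up by $\|(\hat u - \hat\Pi^y\hat u)_{\hat x\hat x}\|_{0,\Lambda}$ on the left. The seminorm $|\hat u_{\hat x}|_2$ instead produces only the factor $(h_x h_y)^{-1/2} h_x$ weighted by $\boldsymbol{h}_M^{\boldsymbol{\alpha}}$ with $|\boldsymbol{\alpha}|=2$, because it differentiates only once in $\hat x$ before the extra $\boldsymbol{\hat D}^{\boldsymbol{\alpha}}$ is applied. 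Dividing the inequality through by the left-side factor $(h_x h_y)^{-1/2} h_x^2$, this mismatch of a single power of $h_x$ reproduces precisely the anomalous $1/h_x$ in front of the second sum in \eqref{eq:PiaxxM}.

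The main obstacle is merely this bookkeeping in the $\boldsymbol{\gamma}=(2,0)$ case: miscounting the $h_x$-exponent of $|\hat u_{\hat x}|_2$ under the dilation is the only way one could derive a wrong estimate. Beyond the affine change of variables no further analytical ingredient is required, and the case $p=\infty$ would require the same cosmetic modification as in the proof of Theorem \ref{thm:Pired}.
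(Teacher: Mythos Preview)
Your proposal is correct and follows exactly the approach indicated in the paper, which simply invokes affine equivalence via \eqref{eq:affinmacro} and refers to the scaling computation in the proof of Theorem~\ref{thm:Pired}. Your careful bookkeeping for the $\boldsymbol{\gamma}=(2,0)$ case---tracking that $|\hat u_{\hat x}|_{2,\Lambda}$ carries only one power of $h_x$ against the two powers on the left---is precisely the content hidden behind the paper's one-line reference, and your remark about $p=\infty$ is superfluous here since the corollary is stated only for $L_2$.
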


\begin{rem}
	By construction $h_x$ denotes the length of the long side of $M$. Hence, the estimate \eqref{eq:PiaxxM} is useful even in the anisotropic case.
\end{rem}

Before we end this section we prove a suboptimal but useful error estimate for $\boldsymbol{\gamma} = (0,0)$.
\begin{lem}\label{lem:anisoL2suboptimal}
	Let $u \in H^3(M)$ then
	\begin{gather*}
	\|u-\Pi^y u\|_{0,M} \le C \sum_{|\boldsymbol{\alpha}|=2} \big( \boldsymbol{h}_M^{\boldsymbol{\alpha}} \|\boldsymbol{D}^{\boldsymbol{\alpha}} u\|_{0,M} + \boldsymbol{h}_M^{\boldsymbol{\alpha}} h_y \|\boldsymbol{D}^{\boldsymbol{\alpha}} u_y \|_{0,M} \big).
	\end{gather*}
	\begin{proof}
		Let $q \in \boldsymbol{P}_1(\Lambda)$ denote the linear polynomial such that
		\begin{gather*}
			\int_{\Lambda} \boldsymbol{D}^{\boldsymbol{\alpha}} (u-q) \D \boldsymbol{x} =0\quad \forall \boldsymbol{\alpha} \in \boldsymbol{P}_1\coloneqq\{(0,0),(1,0),(0,1)\}.
		\end{gather*}
		Then from Lemma \ref{lem:lemma2} it follows that $\|u-q\|_2 \le C |u-q|_2 = C |u|_2$.
		Using this and $\Pi^y q = q$ we see that
		\begin{align*}
			\|u - \Pi^y u\|_0 &\le \|u-q\|_0 + \|\Pi^y (q-u)\|_0\\
			&\le \|u-q\|_2 + C \sum_{i \in \{-1,0,1\}} \sum_{j \in \{-1,1\}} \bigg( |\Pi^y(q-u) (i,j)| + \bigg| \frac{\partial \Pi^y(q-u)}{\partial y} (i,j) \bigg| \bigg)\\
			&\le |u|_2 + C \sum_{i \in \{-1,0,1\}} \sum_{j \in \{-1,1\}} \bigg( | (q-u) (i,j)| + \bigg| \frac{\partial (q-u)}{\partial y} (i,j) \bigg| \bigg)\\
			&\le |u|_2 + C \bigg( \|u-q\|_2 + \bigg\|\frac{\partial (q-u)}{\partial y}\bigg\|_2 \bigg).
		\end{align*}
		From 
		\begin{gather*}
			\|(q-u)_y\|_2 \le \|q-u\|_2 + |(q-u)_y|_2 \le C |u|_2 + |u_y|_2,
		\end{gather*}
		the estimate follows on the reference macro $\hat M$. The assertion of the lemma is again easily obtained by affine transformation.
	\end{proof}
\end{lem}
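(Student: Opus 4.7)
Since the estimate has a different structure on the two sides --- only second derivatives on the right but an $L_2$-norm on the left --- and requires only $H^3$-regularity of $u$, I expect the result to be \emph{suboptimal}: I should therefore only attempt to reproduce linear polynomials, not the full $P_2$ that $\hat\Pi^y$ preserves exactly. The plan is to choose an auxiliary polynomial via Lemmas \ref{lem:lemma1} and \ref{lem:lemma2}, use a triangle inequality, and control the action of $\hat\Pi^y$ via its definition through point values of the function and its $y$-derivative, appealing to the two-dimensional Sobolev embedding $H^2(\Lambda) \hookrightarrow C(\bar\Lambda)$.

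Concretely, I would first apply Lemma \ref{lem:lemma1} with $\boldsymbol{P}_1 \coloneqq \{(0,0),(1,0),(0,1)\}$ to obtain a unique $q \in \boldsymbol{P}_1(\Lambda)$ with $\int_{\Lambda} \boldsymbol{D}^{\boldsymbol{\alpha}} (u-q)\,\D\boldsymbol{X}=0$ for every $\boldsymbol{\alpha} \in \boldsymbol{P}_1$. Lemma \ref{lem:lemma2} then gives $\|u-q\|_2 \le C |u|_2$ on the reference domain. Since $\boldsymbol{P}_1(\Lambda) \subset P_2(\Lambda) \subset S(\hat M)$ and $\hat\Pi^y$ reproduces $P_2$ --- this is exactly the invariance observation used in the proof of \eqref{eq:Pianxx} --- we have $\hat\Pi^y q = q$, and a triangle inequality gives
$$ \|u - \hat\Pi^y u\|_0 \le \|u - q\|_0 + \|\hat\Pi^y(u-q)\|_0 \le C|u|_2 + \|\hat\Pi^y(u-q)\|_0.$$

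For the second summand I would use that $\hat\Pi^y(u-q)$ is, by \eqref{eq:represpiy}, a fixed linear combination of the twelve basis functions $\hat\ell_i(x)\hat\varphi_j(y)$ and $\hat\ell_i(x)\hat\psi_j(y)$, the coefficients being precisely the six values $(u-q)(i,j)$ and the six values $(u-q)_y(i,j)$ at the interpolation nodes; its $L_2(\Lambda)$-norm is therefore bounded by a constant times the maximum of these numbers. The Sobolev embedding $H^2(\Lambda)\hookrightarrow C(\bar\Lambda)$ bounds the nodal values of $u-q$ by $\|u-q\|_2$ and those of $(u-q)_y$ by $\|(u-q)_y\|_2$. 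Using $|(u-q)_y|_2 = |u_y|_2$ and the triangle inequality $\|(u-q)_y\|_2 \le \|u-q\|_2 + |(u-q)_y|_2 \le C(|u|_2 + |u_y|_2)$ then yields the clean reference estimate $\|u-\hat\Pi^y u\|_{L_2(\Lambda)} \le C(|u|_2 + |u_y|_2)$.

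Finally, I would push the reference bound to a world macro-element $M$ via the affine mapping \eqref{eq:affinmacro}. The chain rule gives $\boldsymbol{\hat D}^{\boldsymbol{\alpha}} \hat u = \boldsymbol{h}_M^{\boldsymbol{\alpha}} \boldsymbol{D}^{\boldsymbol{\alpha}} u$, and the Jacobian of $F_M$ equals $h_x h_y$, so after multiplying the reference inequality by $\sqrt{h_x h_y}$ the $|u|_2$-contribution turns into $\sum_{|\boldsymbol{\alpha}|=2}\boldsymbol{h}_M^{\boldsymbol{\alpha}}\|\boldsymbol{D}^{\boldsymbol{\alpha}} u\|_{0,M}$, while the $|u_y|_2$-contribution picks up an additional factor $h_y$ from the rescaling $\hat\partial_y = h_y\partial_y$ and becomes $\sum_{|\boldsymbol{\alpha}|=2}\boldsymbol{h}_M^{\boldsymbol{\alpha}} h_y\|\boldsymbol{D}^{\boldsymbol{\alpha}} u_y\|_{0,M}$, matching the claim. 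I do not anticipate a serious obstacle --- the one point to verify carefully is that $\hat\Pi^y$ indeed reproduces linear polynomials (an immediate consequence of its $P_2$-preservation), so that the polynomial $q$ may be freely subtracted inside the operator; the rest is Sobolev embedding plus bookkeeping of the affine scaling.
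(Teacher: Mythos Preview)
Your proposal is correct and follows essentially the same route as the paper: choose the linear polynomial $q$ via Lemmas~\ref{lem:lemma1}--\ref{lem:lemma2}, use $\hat\Pi^y q = q$ and a triangle inequality, bound $\|\hat\Pi^y(u-q)\|_0$ through the nodal values of $u-q$ and $(u-q)_y$ via the Sobolev embedding $H^2(\Lambda)\hookrightarrow C(\bar\Lambda)$, and finish by affine scaling. The paper's proof is organized identically, including the final step $\|(q-u)_y\|_2 \le \|q-u\|_2 + |(q-u)_y|_2$.
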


\section[Application on a Shishkin mesh]{Application of macro-element interpolation on a tensor product Shish\-kin mesh}\label{sec:Shishkin_macro}

As an application of the anisotropic quasi-interpolation error estimates obtained we want to examine the approximation error of the solution of a reaction-diffusion problem on an anisotropic mesh. Let $u$ denote the solution of the singularly perturbed linear reaction-diffusion problem
\begin{gather}\label{eq:reacdiff}
	-\eps \Delta u + c u = f \quad \text{in $\Omega$},\qquad u=0 \quad \text{on $\partial \Omega$},
\end{gather}
where $0 < \eps \ll 1$, $0 < 2 (c^\star)^2 \le c$ and $c$ and $f$ are smooth functions on some bounded two dimensional domain $\Omega$ with Lipschitz-continuous boundary $\partial \Omega$. 
We consider the unit square $\Omega \coloneqq (0,1)^2$ with the four edges
\begin{alignat*}{2}
	\Gamma_1 &= \{(x,0)\,:\, 0\le x\le 1\},&\qquad \Gamma_2 &= \{(0,y)\,:\, 0\le y\le 1\},\\
	\Gamma_3 &= \{(x,1)\,:\, 0\le x\le 1\},&\qquad \Gamma_4 &= \{(1,y)\,:\, 0\le y\le 1\}.
\end{alignat*}
In the corners of the domain $\Omega$ derivatives of $u$ are unbounded, in general. One refers to the solution components that cause this phenomenon as corner singularities. If we however assume the corner compatibility conditions
\begin{gather}\label{eq:cornercomp}
	f(0,0) = f(1,0) = f(0,1) = f(1,1) = 0,
\end{gather}
then third derivatives of $u$ are smooth up to the boundary, $u \in C^3(\overline \Omega)$, see, e.g.~\cite{HK90}.

The following solution decomposition is taken from \cite[Lemma 1.1 and Lemma 1.2]{LMSZ09}
\begin{lem}\label{lem:soldec}
	The solution $u \in C^3(\overline{\Omega})$ of \eqref{eq:reacdiff} can be decomposed as
	\begin{subequations}	
	\label{eq:soldecestim}		
	\begin{gather}\label{eq:soldec}
		u = S + \sum_{i=1}^4 E_i + E_{12} + E_{23} + E_{34} + E_{41}.
	\end{gather}
	Here $E_i$ is a boundary layer associated with the edge $\Gamma_i$. Similarly, $E_{ij}$ is a corner layer associated with the corner that is formed by the edges $\Gamma_i$ and $\Gamma_j$. Moreover, there are positive constants $C > 0$ such that for all $(x,y) \in \overline{\Omega}$ and $0\le i+j\le 3$ we have
	\begin{align}
		\left| \frac{\partial^{i+j} S(x,y)}{\partial x^i \partial y^j} \right| &\le C \big(1+\eps^{1-(i+j)/2}\big) \label{eq:soldecestimS}\\
		\left| \frac{\partial^{i+j} E_1(x,y)}{\partial x^i \partial y^j} \right| &\le C \big(1 + \eps^{1-i/2}\big) \eps^{-j/2} \E^{-c^\star y /\sqrt{\eps}}\label{eq:soldecestimE1}\\
		\left| \frac{\partial^{i+j} E_{12}(x,y)}{\partial x^i \partial y^j} \right| &\le C \eps^{-(i+j)/2} \E^{-c^\star (x+y) /\sqrt{\eps}}\label{eq:soldecestimE12}
	\end{align}
	\end{subequations}
	and analogous bounds for the other boundary and corner layers.
\end{lem}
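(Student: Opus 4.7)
The plan is to construct each component of the decomposition separately by matched asymptotic expansions, and then control the remainder via the maximum principle. I would seek the regular part as a truncated power series $S = \sum_{k=0}^{N} \eps^{k/2} S_k$, inserted into $-\eps \Delta u + cu = f$ and equated order by order in $\sqrt{\eps}$: at leading order one gets $c S_0 = f$, and subsequently $c S_k = \Delta S_{k-2}$. The compatibility assumption \eqref{eq:cornercomp} propagates through these recursions, and elliptic regularity on the square together with \eqref{eq:cornercomp} gives $S \in C^3(\overline{\Omega})$ with the $\eps$-uniform bound \eqref{eq:soldecestimS}.

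Next I would construct the four edge layers. For $E_1$ I would introduce the stretched coordinate $\eta = y/\sqrt{\eps}$ and expand $E_1 = \sum \eps^{k/2} E_1^{(k)}(x,\eta)$. Each $E_1^{(k)}$ solves an ODE in $\eta$ of the form $-\partial_\eta^2 E_1^{(k)} + c(x,0) E_1^{(k)} = g_k$, with exponential decay as $\eta \to \infty$ and with Dirichlet data on $\eta = 0$ chosen so that $S + \sum_{\ell \le k} \eps^{\ell/2} E_1^{(\ell)}$ vanishes on $\Gamma_1$ up to the appropriate order. Explicit solvability of this ODE produces the factor $\E^{-c^\star y/\sqrt{\eps}}$ and yields \eqref{eq:soldecestimE1}; derivatives in $y$ generate the powers of $\eps^{-1/2}$, while $x$-derivatives act only on smooth coefficients.

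The corner layers then correct the fact that each $E_i$ does not vanish on the two edges adjacent to the one to which it is associated. For $E_{12}$ one passes to the fully stretched variables $(\xi,\eta) = (x/\sqrt{\eps}, y/\sqrt{\eps})$, and the leading term solves $-\Delta E_{12}^{(0)} + c(0,0) E_{12}^{(0)} = 0$ on the quadrant with exponentially decaying boundary data read off from $-E_1$ and $-E_2$. Solvability in exponentially weighted spaces, using $c(0,0) \ge 2 (c^\star)^2 > 0$, yields the symmetric exponential decay of \eqref{eq:soldecestimE12}; the other three corner layers are treated identically.

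The concluding step is to bound the remainder $R := u - S - \sum_i E_i - \sum E_{ij}$, which solves a homogeneous-boundary reaction-diffusion problem with right-hand side of order $\eps^{(N+1)/2}$ in $L^\infty$ once $N$ is chosen large enough. The maximum principle bounds $\|R\|_\infty$, and Schauder-type estimates applied to differentiated equations extend the bound to derivatives through order three, where they are absorbed into the right-hand sides of \eqref{eq:soldecestim}. The main obstacle is the interplay between corner compatibility and regularity up to third order: without \eqref{eq:cornercomp} one would only obtain $u \in C^1(\overline{\Omega})$, so the delicate point is verifying that each expansion term preserves the compatibility through the recursion --- hence that $S$, the $E_i$ and the $E_{ij}$ all really live in $C^3(\overline{\Omega})$ with $\eps$-independent constants --- as carried out in \cite{HK90,LMSZ09}.
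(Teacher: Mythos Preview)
The paper does not actually prove this lemma: it is stated without proof and attributed directly to \cite[Lemma~1.1 and Lemma~1.2]{LMSZ09}. Your proposal, by contrast, outlines the classical matched-asymptotic-expansion construction (regular expansion in $\sqrt{\eps}$, stretched-variable edge correctors, doubly stretched corner correctors, then a maximum-principle bound on the remainder). This is indeed the standard route taken in the references the paper cites, so your sketch is consistent with what underlies the quoted result, but it goes well beyond what the present paper does, which is simply to invoke the literature.

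One small caution: the decomposition in \cite{LMSZ09} is not obtained quite as cleanly as a finite truncated series plus a small remainder absorbed back into the components; the actual argument there (and in \cite{HK90}) handles the regularity and compatibility bookkeeping somewhat differently, and the bound \eqref{eq:soldecestimS} with the factor $1+\eps^{1-(i+j)/2}$ reflects that the third-order derivatives of $S$ are in general \emph{not} $\eps$-uniformly bounded under only \eqref{eq:cornercomp}. Your phrase ``$\eps$-uniform bound \eqref{eq:soldecestimS}'' slightly overstates this; the bound is precisely the one stated, which does blow up mildly for $i+j=3$. This is also why the paper later discusses the additional hypothesis $|S|_3 \le C$ separately in Remark~\ref{rem:S3}.
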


Next we introduce a standard domain decomposition. Let $N$ denote a multiple of eight --- $N$ will later denote the number of mesh intervals in each coordinate direction --- and define the transition point
\begin{gather}\label{eq:trapo}
	\lambda \coloneqq \min \left \{ \frac{1}{4}, \frac{\lambda_0 \sqrt{\eps}}{c^\star} {\ln N} \right \}\qquad \text{with $\lambda_0 \ge 3$.}
\end{gather}
For our subsequent error analysis we shall make the practical and standard assumption
\begin{gather*}
	\sqrt{\varepsilon} \le C N^{-1},
\end{gather*}
from which $\lambda < 1/4$ follows.

\begin{figure}
	\centering
	\includegraphics[height=6.15cm]{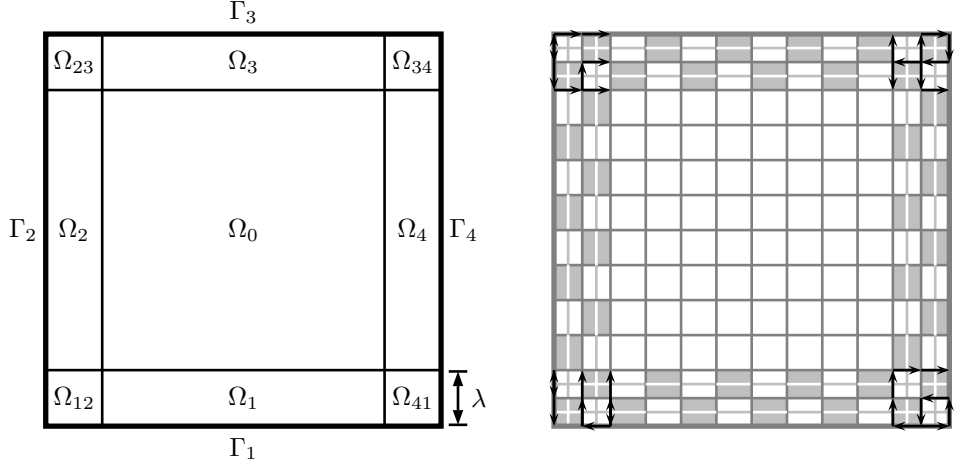}
	\caption{Domain decomposition (left) and anisotropic mesh $\Omega^N$ (right) for $N=16$, corresponding macro-element triangulation $\mathcal{M}^{16}$ of $\Omega\setminus\Omega_0$ as checkerboard and possible choice for $\sigma_{i,j}$ symbolized by black arrows pointing to the corresponding mesh node $\boldsymbol{X}_{ij}$.}
	\label{fig:domain}
\end{figure}

For our approximation error analysis we use a standard approach and split the domain into several subdomains
	\begin{alignat*}{2}
		\Omega_0 &\coloneqq (\lambda,1-\lambda)^2, &\qquad \Omega_{12} &\coloneqq (0,\lambda)^2,\\
		\Omega_1 &\coloneqq (\lambda,1-\lambda)\times(0,\lambda),&\qquad \Omega_{23} &\coloneqq (0,\lambda)\times(1-\lambda,1),\\
		\Omega_2 &\coloneqq (0,\lambda)\times(\lambda,1-\lambda),&\qquad \Omega_{34} &\coloneqq (1-\lambda,\lambda)^2,\\
		\Omega_3 &\coloneqq (\lambda,1-\lambda)\times(1-\lambda,1),&\qquad \Omega_{41} &\coloneqq (1-\lambda,\lambda)\times (0,\lambda),\\
		\Omega_4 &\coloneqq (1-\lambda,1)\times(\lambda,1-\lambda),&\qquad \Omega_f &\coloneqq \Omega_{12} \cup \Omega_{23} \cup \Omega_{34} \cup \Omega_{41},
	\end{alignat*}
as shown in the left of Figure \ref{fig:domain}. 

We use $\lambda$ to construct a 1D Shishkin mesh as follows: subdivide each of the intervals $[0,\lambda]$, $[1-\lambda,1]$ into $N/4$ subintervals, equidistantly. Giving the small grid size $h = \lambda/(N/4-2)$. Next, divide the third subinterval $[\lambda,1-\lambda]$ into $N/2$ subintervals of same size $H$. Hence, the mesh is uniform in each of the subintervals $[0,\lambda]$, $[\lambda,1-\lambda]$ and $[1-\lambda,1]$ but it changes from fine to coarse at the transition points $\lambda$ and $1-\lambda$. Remark that since $N$ is a multiple of eight the number of subintervals within $[0,\lambda]$, $[1-\lambda,1]$ and $[\lambda,1-\lambda]$ is even. Finally, form the tensor product of this one-dimensional mesh with itself to obtain our anisotropic Shishkin mesh $\Omega^N$ with the mesh nodes $\{(x_i,y_j)\}_{i,j=0,\dots,N}$. 

Note that by the definition of $\lambda$ in the inner subdomain $\Omega_0$ all the layers have declined such that they can be bounded pointwise by a constant times $N^{-\lambda_0}$. This is however not true for their derivatives.
Consequently, it is very challenging to define a $C^1$ (quasi-)interpolant of $u \in C^3(\overline\Omega)$ in the function space of piecewise biquadratics over $\Omega^N$ featuring anisotropic error estimates.
We relax this too ambitious objective by defining a quasi-interpolant $u^\star$ of $u$, such that the normal derivative of $u^\star$ is continuous only across certain edges of $\Omega^N$. For this purpose we shall use the results of the previous sections on macro-element quasi-interpolation.

In $\Omega_f$, i.e. close to corners of the domain, we combine four neighbouring elements of equal shape to form a macro-element $M = [x_{i-1},x_{i+1}] \times [y_{j-1},y_{j+1}]$ and in $\Omega_1 \cup \Omega_3$ we combine two neighbouring elements to get $M = [x_{i},x_{i+1}] \times [y_{j-1},y_{j+1}]$ as shown in the right of Figure \ref{fig:domain}. In $\Omega_2 \cup \Omega_4$ we proceed likewise. We denote the obtained macro-element triangulation by $\mathcal{M}^N$. Note that the mesh $\Omega^N$ can also be understood as the result of a refinement routine of the macro-element mesh $\mathcal{M}^N$. 

The elements of our Shishkin mesh $\Omega^N$ are axis-parallel rectangles with side lengths
\begin{gather}
	h \coloneqq \frac{4 \lambda}{N-8} = \Landau(\sqrt{\eps} N^{-1} {\ln N})\qquad \text{or} \qquad  H \coloneqq \frac{2\big(1-2\lambda\big)}{N} \sim N^{-1}.
\end{gather}
The sizes of a macro-element are equivalent to the sizes of the containing mesh elements.

Close to the corners of the domain, i.e.~in $\Omega_f$ we want to approximate $u$ by the quasi-interpolant $\tilde \Pi u$, see Subsection \ref{subsec:SZ_macro}. Hence, we have to specify how the macro-element edges $\sigma_{i,j}$ associated with the macro-element vertices $\boldsymbol{X}_{ij} \in \overline{\Omega_f}$ are chosen. If we want to satisfy Assumption \ref{assump:ass1} on our anisotropic mesh we have to choose carefully whenever $\boldsymbol{X}_{ij}$ lies on one of the lines $x=x_{N/4}=\lambda$, $x=x_{3N/4}=1-\lambda$ or $y=y_{N/4}=\lambda$, $y=y_{3N/4}=1-\lambda$ where the mesh sizes change abruptly. Restricted to $\Omega_f$ our Shishkin mesh $\Omega^N$ is (quasi-)uniform, hence any choice that satisfies
\begin{gather}\label{eq:choice_sigma}
	\sigma_{i,j} \subset \overline{\Omega_f}
\end{gather}
is possible. One may fulfill \eqref{eq:choice_sigma} as demonstrated in the right of Figure \ref{fig:domain}. In that Figure a macro-element edge $\sigma_{i,j}$ is symbolized by an arrow pointing to $\boldsymbol{X}_{ij}$. 

Let us recall the functions $\varphi_i, \psi_i \in C^1[0,1]$, supported within $[x_{1-2},x_{i+2}]$, defined by\\
\begin{minipage}{0.75\textwidth}
\begin{gather*}
\begin{alignedat}{2}
	\varphi_{i} (x) &\coloneqq \left\{ \begin{alignedat}{2} &\frac{1}{2} + \frac{x-x_{i-1}}{h_{i-1}}+\frac{(x-x_{i-1})^2}{2 h_{i-1}^2}&&\quad \text{in $[x_{i-2},x_{i-1}]$,} \\ &\frac{1}{2} + \frac{x-x_{i-1}}{h_{i-1}}-\frac{(x-x_{i-1})^2}{2 h_{i-1}^2}&&\quad \text{in $[x_{i-1},x_{i}]$,} \\ &\frac{1}{2} - \frac{x-x_{i+1}}{h_{i+1}}-\frac{(x-x_{i+1})^2}{2 h_{i+1}^2}&&\quad \text{in $[x_{i},x_{i+1}]$,} \\ &\frac{1}{2} - \frac{x-x_{i+1}}{h_{i+1}}+\frac{(x-x_{i+1})^2}{2 h_{i+1}^2}&&\quad \text{in $[x_{i+1},x_{i+2}]$,} \end{alignedat} \right.\\
	\psi_{i} (x) &\coloneqq \left\{ \begin{alignedat}{2} &{-\frac{h_{i-1}}{4}} - \frac{x-x_{i-1}}{2}-\frac{(x-x_{i-1})^2}{4 h_{i-1}}&&\quad \text{in $[x_{i-2},x_{i-1}]$,} \\ &{-\frac{h_{i-1}}{4}} - \frac{x-x_{i-1}}{2}+\frac{3(x-x_{i-1})^2}{4 h_{i-1}}&&\quad \text{in $[x_{i-1},x_{i}]$,} \\ &\frac{h_{i+1}}{4} - \frac{x-x_{i+1}}{2}-\frac{3(x-x_{i+1})^2}{4 h_{i+1}}&&\quad \text{in $[x_{i},x_{i+1}]$,} \\ &\frac{h_{i+1}}{4} - \frac{x-x_{i+1}}{2}+\frac{(x-x_{i+1})^2}{4 h_{i+1}}&&\quad \text{in $[x_{i+1},x_{i+2}]$,} \end{alignedat} \right.
\end{alignedat}
\end{gather*}
\end{minipage}%
\begin{minipage}{0.25\textwidth}
\centering
\begin{tabular}{rcc}
$x$ & $\varphi_i(x)$ & $\varphi_i'(x)$\\
\hline
$x_{i-2}$ & $0$ & $0$\\	
$x_{i-1}$ & $0.5$ & $>0$\\
$x_{i}$ & $1$ & $0$\\	
$x_{i+1}$ & $0.5$ & $<0$\\
$x_{i}$ & $0$ & $0$\\	
\end{tabular}

\vspace*{5mm}

\begin{tabular}{rcc}
$x$ & $\psi_i(x)$ & $\psi_i'(x)$\\
\hline
$x_{i-2}$ & $0$ & $0$\\	
$x_{i-1}$ & $<0$ & ${-0.5}$\\
$x_{i}$ & $0$ & $1$\\	
$x_{i+1}$ & $>0$ & ${-0.5}$\\
$x_{i}$ & $0$ & $0$\\	
\end{tabular}
\end{minipage}\\[2mm]
with $h_{i-1} \coloneqq x_{i-1} - x_{i-2} = x_i - x_{i-1}$ and $h_{i+1} \coloneqq x_{i+1} - x_i = x_{i+2} - x_{i+1}$, i.e.~$h_i = h$ for $i<N/4$ or $i>3N/4$ and $h_i = H$ else. Based on these one-dimensional functions one can define the global basis functions in the world domain
\begin{gather}\label{eq:basis2dworld}
	\begin{alignedat}{2}
	\varphi_{i,j}(x,y) &\coloneqq \varphi_i(x) \varphi_j(y),&\quad \phi_{i,j}(x,y) &\coloneqq \psi_i(x) \varphi_j(y),\\
	\chi_{i,j}(x,y) &\coloneqq \varphi_i(x) \psi_j(y),&\quad \psi_{i,j}(x,y) &\coloneqq \psi_i(x) \psi_j(y),
	\end{alignedat}\quad i,j=0,\dots,N.
\end{gather}

Now we are able to define our quasi-interpolation operator into the finite element space
\begin{gather}
	V^N \coloneqq \{ v \in H^{1}(\Omega)\;{:} \left. v\right|_T \in Q_2(T)\;\; \forall T \in \Omega^N\}.
\end{gather}

As already mentioned, for $M = [x_{i-1},x_{i+1}] \times [y_{j-1},y_{j+1}] \subset \overline{\Omega_f}$, $M \in \mathcal{M}^N$ close to the corners of the domain we use the quasi-interpolation operator $\tilde \Pi$ from Subsection \ref{subsec:SZ_macro}, i.e.
\begin{gather*}
	u^\star\big|_M = (\tilde \Pi u)|_M = \sum_{\substack{k=i-1,i+1\\ \ell=j-1,j+1}} u(x_k,y_\ell) \varphi_{k,\ell} + u_x(x_k,y_\ell) \phi_{k,\ell} + u_y(x_k,y_\ell) \chi_{k,\ell} + a_{k,\ell} \psi_{k,\ell}.
\end{gather*}
The coefficients $a_{k,\ell}$ depend on the direction of $\sigma_{k,\ell}$ given by \eqref{eq:aijfinal}:
\begin{gather*}
	 a_{k,\ell} = \left\{\begin{alignedat}{2}
		&\int_{\sigma_{k,\ell}} \frac{\partial^2 u (x,y_\ell)}{\partial x\partial y} \psi_{k}^d(x) \,\D x &\quad &\text{if $\sigma_{k,\ell}$ is horizontal,}\\
		&\int_{\sigma_{k,\ell}} \frac{\partial^2 u (x_k,y)}{\partial x\partial y} \psi_{\ell}^d(y) \,\D y &\quad &\text{if $\sigma_{k,\ell}$ is vertical,}\\
		\end{alignedat}\right.
\end{gather*}
with the dual basis functions $\psi_{k}^d$ obtained in \eqref{eq:duallocal}:
\begin{gather*}
\psi_{k}^d (x) \coloneqq \left\{
\begin{alignedat}{2}
	&{-\frac{h_{k-1}^2 + 12 h_{k-1} (x-x_{k-1})}{2 h_{k-1}^3}} \left\{ \begin{alignedat}{2} & - \frac{3 (x-x_{k-1})^2}{h_{k-1}^3},&\quad x_{k-2} &\le x \le x_{k-1}, \\ & + \frac{9 (x-x_{k-1})^2}{h_{k-1}^3},&\quad x_{k-1} &\le x \le x_{k}, \end{alignedat} \right.\qquad &\\
	&{-\frac{h_{k+1}^2 -12 h_{k+1} (x-x_{k+1})}{2 h_{k+1}^3}} \left\{ \begin{alignedat}{2} & +\frac {9 (x-x_{k+1})^2}{h_{k+1}^3},&\quad x_k &\le x \le x_{k+1}, \\ & -\frac{3 (x-x_{k+1})^2}{h_{k+1}^3},&\quad x_{k+1} &\le x \le x_{k+2}. \end{alignedat} \right.
\end{alignedat}\right.
\end{gather*}

In $\Omega_0$ we use on the element level the standard biquadratic nodal interpolant $u^I$ of $u$. Set $\mathcal{I} \coloneqq \{\frac{N}{4},\frac{N}{4}+\frac{1}{2},\frac{N}{4}+1,\frac{N}{4}+\frac{3}{2},\dots,\frac{3}{4}N\}$. Let $\ell_i$ denote the \emph{1D quadratic Lagrange basis functions}, $i\in \mathcal{I}$ with
\begin{align*}
	\ell_i(x) &= \left \{ \begin{alignedat}{2}
	& \frac{2}{h_i^2}(x-x_{i-1})(x-x_{i-1/2}),&\quad x_{i-1} &\le x \le x_{i}\\
	& \frac{2}{h_{i+1}^2}(x_{i+1}-x)(x_{i+1/2}-x),&\quad x_{i} &\le x \le x_{i+1}
	\end{alignedat} \right. \quad \text{for }i \in \mathcal{I}\cap \N,\\
	\ell_{i+1/2}(x) &= \frac{4}{h_{i+1}^2}(x_{i+1} - x)(x - x_{i}),\quad \text{for }i \in\mathcal{I}\cap \N, i \neq N,
\end{align*}
where $x_{i+1/2} \coloneqq (x_{i}+x_{i+1})/2$, $i \in\mathcal{I}\cap \N$ with $i \neq N$, denotes the midpoint of the interval $[x_{i},x_{i+1}]$. Now for $T \subset \overline{\Omega_0}$ we set
\begin{gather*}
	u^\star|_T(x,y) \coloneqq u^I|_T(x,y) = \sum_{i,j \in  \mathcal{I}} u(x_i,y_j) \ell_i(x) \ell_j(y),\quad (x,y) \in T.
\end{gather*}

Finally, we need some modified anisotropic macro-interpolation operator in $\bigcup_{i=1}^4 \Omega_i$ to glue these interpolants together. Let us consider a macro-element $M = [x_{i},x_{i+1}]\times[y_{j-1},y_{j+1}] \subset \overline{\Omega_1}$. The two elements contained in this macro-element have a long side of length $H$ in $x$-direction and a short one in $y$-direction (with length $h$). On all of these macro-elements $M \subset \overline{\Omega_1}$ that are not adjacent to $\partial\Omega_0$ we use the anisotropic macro-interpolation $\Pi^y$ as introduced and analyzed in Section \ref{sec:anisomacro_macro}, c.p.~\eqref{eq:represpiy}:
\begin{gather*}
	u^\star|_M(x,y) = \Pi^y u(x,y) \coloneqq \sum_{\substack{k\in\{i,i+1/2,i+1\}\\m\in\{j-1,j+1\}}} \left( u(k,m)  \ell_k(x) \varphi_m(y) + \frac{\partial u}{\partial y}(k,m) \ell_k(x) \psi_m(y) \right).
\end{gather*}
We use the same interpolation operator for $M \subset \overline{\Omega_3}$. In $\Omega_2\cup \Omega_4$ we use $\Pi^x$ instead. Hence, the roles of $x$ and $y$ are interchanged, there.

On macro-elements that are adjacent to $\partial \Omega_0$ we modify the anisotropic macro-interpolation operator in order to archive continuity of the normal derivative $\partial_n u^\star$ across $\partial \Omega_0$. Let for instance $M=[x_{i},x_{i+1}]\times[y_{N/4-2},y_{N/4}] \subset \overline{\Omega_1}$ denote such a macro-element. Then on $M$ the interpolant $u^\star$ is of the form:
\begin{align*}
	u^\star|_M(x,y) &= \sum_{k\in\{i,i+1/2,i+1\}} \bigg( \sum_{j\in\{N/4-2,N/4\}} u(x_k,y_j) \ell_k(x) \varphi_j(y) \\	&\quad+ \frac{\partial u}{\partial y}(x_k,y_{N/4-2}) \ell_k(x) \psi_{N/4-2}(y)
	+ \frac{\partial (u^I|_{\Omega_0})}{\partial y}(x_k,y_{N/4}) \ell_k(x) \psi_{N/4}(y) \bigg).
\end{align*}
In the other subdomains we proceed likewise. Since $\left.\frac{\partial (u^I|_{\Omega_0})}{\partial y}\right|_{M \cap \Omega_0}$ and $\left.\frac{\partial (u^\star|_{M})}{\partial y}\right|_{M \cap \Omega_0}$ are quadratic polynomials they are indeed uniquely determined by the values in three distinct points along the edge where they coincide. Note further that $\frac{\partial (u^I|_{\Omega_0})}{\partial y}(x_k,y_{N/4})$ is simply a linear combination of the nodal values $u(x_k,y_{N/4})$, $u(x_k,y_{N/4+1/2})$ and $u(x_k,y_{N/4+1})$. Hence, this coefficient is well defined along element interfaces due to the continuity of $u^I$.

Summarizing,
\begin{gather*}
	u^\star(x,y) = \left \{ \begin{alignedat}{2}
		&(\tilde \Pi u)|_M &\qquad &\text{$(x,y) \in M \subset \overline{\Omega_f}$,}\\
		& (\Pi^y u)|_M + \!\!\!\!\!\sum_{\substack{i=N/2 \\ {j \in \{N/4, 3N/4\}}}}^{3 N/2}\!\!\!\!\! \left.\frac{\partial(u^I-u)}{\partial y} \right|_{\Omega_0}\!\!\!\!(x_{i/2},y_j)\, \ell_{i/2}(x) \psi_j(y) &\qquad &\text{$(x,y) \in M \subset \overline{\Omega_1} \cup \overline{\Omega_3}$,}\\
		& (\Pi^x u)|_M + \!\!\!\!\!\sum_{\substack{j=N/2 \\ {i \in \{N/4, 3N/4\}}}}^{3 N/2}\!\!\!\!\! \left.\frac{\partial(u^I-u)}{\partial x} \right|_{\Omega_0}\!\!\!\!(x_i,y_{j/2})\, \psi_i(x) \ell_{j/2}(y)  &\qquad &\text{$(x,y) \in M \subset \overline{\Omega_2} \cup \overline{\Omega_4}$,}\\
		& u^I|_T &\qquad &\text{$(x,y) \in T \subset \overline{\Omega_0}$.}
		\end{alignedat} \right.
\end{gather*}

By construction the normal derivative of $u^\star$ is only discontinuous along short edges of anisotropic elements (type-III edges) and interior edges of $\Omega_0$ (type I edges). For some illustration see Figure \ref{fig:cont-dofs}.

\begin{figure}
	\centering
	\includegraphics{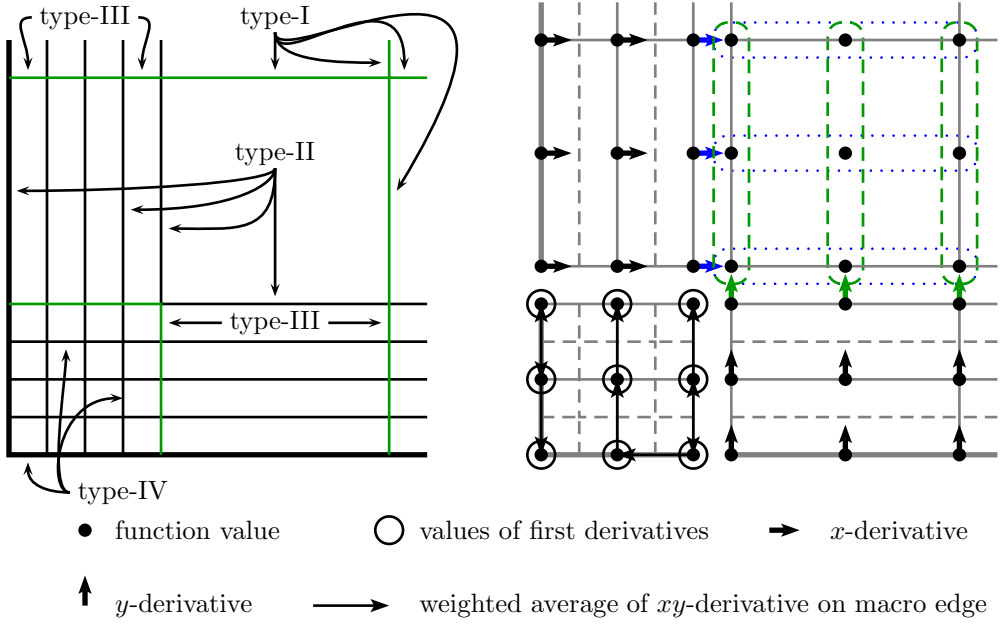}
	\caption{The normal derivative of $u^\star$ is discontinuous along the edges of type I and III highlighted in green (left) and linear functionals of $u$ that enter in the definition of $u^\star$ in the various subdomains (right).}
	\label{fig:cont-dofs}
\end{figure}

Before we analyze $u^\star$ on the Shishkin mesh $\omega^N$ let us assign a type to each element edge as shown in the left of Figure \ref{fig:cont-dofs}: 

\begin{defn}\label{def:edgetype}
A type-I edge $e \subset \overline{\Omega_0}$ is a long edge given as the intersection of two isotropic elements. An edge that belongs to at least one anisotropic element is of type II if it is a long one. Otherwise it is short and of type III. A remaining type-IV edge $e \subset \overline{\Omega_f}$ belongs to two small and square shaped elements and is close to a corner of $\Omega$. Let $\mathcal{E}(I)$ be the set of interior edges of type I and introduce similar symbols for $\mathcal{E}(II)$, $\mathcal{E}(III)$ and $\mathcal{E}(IV)$.
\end{defn}

First we show that the modification is small in various $L_2$-based norms. By the solution decomposition \eqref{eq:soldecestim}, standard interpolation error estimates and the choice of $\lambda$ wee find that
\begin{align*}
	|u-u^I|_{W_{1,\infty}(\Omega_0)} &\le |S-S^I|_{W_{1,\infty}(\Omega_0)} + |(u-S) - (u-S)^I|_{W_{1,\infty}(\Omega_0)}\\
	& \le C \big( H^2 |S|_{W_{3,\infty}} + |u-S|_{W_{1,\infty}(\Omega_0)} + |(u-S)^I|_{W_{1,\infty}(\Omega_0)}\big)\\
	& \le C \big( H^2 \eps^{-1/2} + \eps^{-1/2} N^{-\lambda_0} + H^{-1} N^{-\lambda_0}\big) \le C \eps^{-1/2} N^{-2}.
\end{align*}
Here we also used an inverse estimate. Let $\omega_1$ denote the strip of macro-elements in $\Omega_1$ that are adjacent to $\Omega_0$ then for $|\boldsymbol{\alpha}| \le 2$ it holds
\begin{gather}
	\bigg\|\boldsymbol{D}^{\boldsymbol{\alpha}} \sum_{i=N/2}^{3 N/2} \left.\frac{\partial(u^I-u)}{\partial y} \right|_{\Omega_0}(x_{i/2},y_{N/4})\, \ell_{i/2} \psi_{N/4} \bigg\|_{0,\omega_1} \le |u-u^I|_{W_{1,\infty}(\Omega_0)} \bigg\|\sum_{i=N/2}^{3 N/2} \ell_{i/2}^{(\alpha_1)} \psi_{N/4}^{(\alpha_2)} \bigg\|_{0,\omega_1} \notag\\
	\le C \eps^{-1/2} N^{-2} \meas(\Omega_1)^{1/2} \bigg\|\sum_{i=N/2}^{3 N/2} \ell_{i/2}^{(\alpha_1)} \bigg\|_{L_\infty([x_{N/4},x_{3 N/4}])} \big\|\psi_{N/4}^{(\alpha_2)} \big\|_{L_\infty([y_{N/4-2},y_{N/4}])} \label{eq:modestim}\\
	\le C \eps^{-1/4} N^{-5/2} (\ln N)^{1/2} H^{-\alpha_1} h^{1-\alpha_2} \le C \eps^{1/4-\alpha_2/2} N^{-7/2+\alpha_1+\alpha_2} (\ln N)^{1/2 - \alpha_2}.\;\;\notag
\end{gather}
For $|\boldsymbol{\alpha}| = 2$ the $L_2$ norms have to be read as norms in the broken Sobolev space over $\mathcal{M}^N$. Bounds for the other three strips $\omega_i$ in $\Omega_i$ for $i=2,3,4$ that are adjacent to $\Omega_0$ follow similarly.

Since the Shishkin mesh is (quasi-)uniform in $\Omega_f$ and by the choice of the macro-element edges according to \eqref{eq:choice_sigma} the interpolation error estimates for $\tilde \Pi$ simplify to (c.p.~Theorem \ref{thm:tapproxunif})
\begin{gather}\label{eq:tildePiuniform}
	\big| v-\tilde \Pi v \big|_{k,M} \le C h^{3-k} |v|_{3,\omega_M}\quad \text{for $v \in H^3(\omega_M)$ and $k \le 2$.}
\end{gather}

Next we estimate the approximation error of $u-u^\star$: 
\begin{lem}\label{lem:l2estimates}
	There exists a constant $C>0$ such that
	\begin{subequations}\label{eq:l2estimates}
	\begin{align}
		\|u - u^\star\|_0 &\le C \big( N^{-2} + \eps^{1/4} N^{-2} (\ln N)^2 \big), \label{eq:l2estimatea}\\
		\eps^{1/4} |u-u^\star|_1 &\le C \big( \eps^{1/4} N^{-1} + N^{-2} (\ln N)^2 \big), \label{eq:h1estimatea}\\
		\eps^{3/4} \bigg(\sum_{M \in \mathcal{M}^N} |u-u^\star|_{2,M}^2\bigg)^{1/2} &\le C \big( \eps N^{-1} (\ln N)^2 + N^{-1} \ln N \big). \label{eq:h2estimatea}
		\shortintertext{If $\eps^{1/4} \le (\ln N)^{-2}$, then}
		\|u - u^\star\|_0 &\le C N^{-2},\label{eq:l2estimatesb}\\
		\eps^{3/4} \bigg(\sum_{M \in \mathcal{M}^N} |u-u^\star|_{2,M}^2\bigg)^{1/2} &\le C N^{-1} \ln N. \label{eq:h2estimateb}
		\shortintertext{If $|S|_{3} \le C \eps^{-1/4}$, then}
		\eps^{1/4} |u-u^\star|_1 &\le C  N^{-2} (\ln N)^2. \label{eq:h1estimateb}
		\shortintertext{Suppose $\eps^{1/4} \le (\ln N)^{-3}$ and $|S|_3 + \sum_{i\in{1,3}} \|\boldsymbol{D}^{(3,0)} E_i\|_{0,\Omega_i} + \sum_{j\in{2,4}} \|\boldsymbol{D}^{(0,3)} E_j\|_{0,\Omega_j} \le C$, then}
		\|u - u^\star\|_0 &\le C N^{-3} (\ln N)^3.\label{eq:l2estimatesc}
	\end{align}
	\end{subequations}
\end{lem}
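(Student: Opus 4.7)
All bounds in Lemma~\ref{lem:l2estimates} are obtained by the same scheme: split $\Omega$ into the coarse interior $\Omega_0$, the four anisotropic strips $\Omega_i$, $i=1,\dots,4$, and the corner region $\Omega_f$, and estimate $u-u^\star$ on each piece using the interpolation operator attached to it, together with the solution decomposition \eqref{eq:soldec}. On $\Omega_0$ the operator is standard biquadratic Lagrange interpolation on a quasi-uniform mesh of width $H\sim N^{-1}$; on $\Omega_f$ it is $\tilde\Pi$, for which Assumption \ref{assump:ass2} holds with $h\sim\sqrt{\eps}N^{-1}\ln N$, so Theorem~\ref{thm:tapproxunif} applies verbatim; on the strips it is $\Pi^y$ or $\Pi^x$ of Section~\ref{sec:anisomacro_macro}, complemented by the (small) modification terms already bounded in \eqref{eq:modestim}. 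Summing the squared element/macro contributions and collecting produces each of the claimed estimates.

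\textbf{Core computations.} On $\Omega_0$ the regular part $S$ is estimated via $\|S-S^I\|_{0,\Omega_0}\le CH^2|S|_{2,\Omega_0}\le CN^{-2}$ (using $|S|_2\le C$), while the layer components are pointwise $\le CN^{-\lambda_0}$ there by choice of the transition point \eqref{eq:trapo}; an inverse inequality transfers the same smallness to first and (broken) second derivatives. On $\Omega_f$ Theorem~\ref{thm:tapproxunif} yields $|v-\tilde\Pi v|_{k,M}\le C h^{3-k}|v|_{3,\omega_M}$; using the sharp bounds $|E_{12}|_{3,\Omega_{12}}\le C\eps^{-1}$, $|E_1|_{3,\Omega_{12}}\le C\eps^{-3/4}(\ln N)^{1/2}$, and $|S|_{3,\Omega_{12}}\le C\ln N$, every corner contribution is dominated by $\eps^{(3-k)/2}N^{-(3-k)}(\ln N)^{3-k}$ times a moderate $\eps$-power, i.e.~it is controlled by the right-hand sides of \eqref{eq:l2estimates}. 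On the strips I apply Lemma~\ref{lem:anisoL2suboptimal} and the anisotropic estimates \eqref{eq:PianxxM}, \eqref{eq:PiaxxM}: the mixed factor $H^{\alpha_1}h^{\alpha_2}$ is exactly tuned to the anisotropy of $S$ and of the layer $E_1$, since $\partial_y^j E_1$ scales like $\eps^{-j/2}e^{-c^\star y/\sqrt{\eps}}$ while $x$-derivatives stay $O(1)$; integrating the exponential over $y\in[0,\lambda]$ yields the master factor $h\eps^{-1/2}\le CN^{-1}\ln N$ and the $H$-factors produce $N^{-1}$ or $N^{-2}$ depending on $|\boldsymbol{\gamma}|$. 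Corner-layer parts like $E_{12}$ decay exponentially in $x$ on $\Omega_1$, so their contribution is $O(N^{-\lambda_0})$. The modification terms supported on the strips adjacent to $\partial\Omega_0$ satisfy \eqref{eq:modestim} and are of higher order in every case.

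\textbf{Refinements.} Parts \eqref{eq:l2estimatesb}, \eqref{eq:h2estimateb} are immediate consequences of \eqref{eq:l2estimatea}, \eqref{eq:h2estimatea} once the hypothesis $\eps^{1/4}\le(\ln N)^{-2}$ absorbs the extra $\eps^{1/4}(\ln N)^2$ factors. For \eqref{eq:h1estimateb} the assumption $|S|_3\le C\eps^{-1/4}$ sharpens the $S$-contribution from $\Omega_0$ and $\Omega_f$ to $CN^{-2}$ and $C\eps^{5/4}N^{-3}(\ln N)^3$, both of which fit under $\eps^{-1/4}\cdot N^{-2}(\ln N)^2$. For the cubic-order bound \eqref{eq:l2estimatesc} one replaces the second-order use of $S$ on $\Omega_0$ by a third-order one, invokes $|S|_3\le C$ in every subdomain, and exploits $\|\partial_x^3 E_i\|_0+\|\partial_y^3 E_j\|_0\le C$ in the strips so that the anisotropic $L_2$ bound of Lemma~\ref{lem:anisoL2suboptimal} delivers $H^3$ and $H^2h$ contributions of the form $N^{-3}$ or $N^{-3}(\ln N)^{3}$.

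\textbf{Main obstacle.} The delicate point is the broken $H^2$-estimate \eqref{eq:h2estimatea}. The anisotropic operator $\Pi^y$ obeys the optimal bound \eqref{eq:PianxxM} only for $\boldsymbol{\gamma}\ne(2,0)$; for the pure $xx$-derivative one is forced to use \eqref{eq:PiaxxM}, which costs a factor $1/h_x = 1/H$ and therefore only delivers first-order convergence in $H$. In the strips this combines with the $y$-decay of $E_1$ to produce exactly the $\eps^{-3/4}(N^{-1}\ln N+\eps^{1/4}N^{-1}(\ln N)^2)$ seen on the right of \eqref{eq:h2estimatea}; ensuring that the seams between $\tilde\Pi$ on $\Omega_f$, $\Pi^y/\Pi^x$ on the strips, and $u^I$ on $\Omega_0$ do not spoil this balance is the technical heart of the proof, and is what makes the modification bound \eqref{eq:modestim} indispensable.
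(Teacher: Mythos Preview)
Your proposal is correct and follows essentially the same route as the paper: domain splitting into $\Omega_0$, the strips $\Omega_i$, and $\Omega_f$; solution decomposition \eqref{eq:soldec}; Theorem~\ref{thm:tapproxunif} on the corners; standard Lagrange estimates plus inverse inequalities on $\Omega_0$; and Lemma~\ref{lem:anisoL2suboptimal} together with \eqref{eq:PianxxM}--\eqref{eq:PiaxxM} on the strips, with the modification handled by \eqref{eq:modestim}. One point you gloss over slightly: for the ``off-diagonal'' layer terms on the strips (e.g.\ $E_{12}$ on $\Omega_1$, or $E_1$ on $\Omega_2$) in the $H^1$ and $H^2$ bounds, the paper does not simply invoke $O(N^{-\lambda_0})$ smallness but combines the $L_\infty$-stability \eqref{eq:Piainfstab} of $\Pi^y$ with an inverse estimate---this is essential because the derivatives of these layers are not small, only the functions themselves are.
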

\begin{proof}
We use the solution decomposition \eqref{eq:soldecestim} several times without mentioning it explicitly and different techniques in each subdomain.

In $\Omega_f$ the approximation error is small because the mesh is very fine. We use \eqref{eq:tildePiuniform}:
\begin{gather}\label{eq:estimOmegah}
\begin{aligned}
	|u-u^\star|_{k,\Omega_f} = \big|u- \tilde \Pi u\big|_{k,\Omega_f} \le C h^{3-k} |u|_{3,\Omega_f} &\le C h^{3-k} \eps^{-3/2} \meas(\Omega_f)^{1/2}\\
	&= C \eps^{(1-k)/2} N^{k-3} (\ln N)^{2-k}.
\end{aligned}
\end{gather}

In $\Omega_0$ the Shishkin mesh is coarse but all layer components have declined sufficiently. With the $L_\infty$- stability of the nodal interpolant we get
\begin{gather*}
	\|(u-S)^I\|_{L_\infty(\Omega_0)} \le C \| u-S \|_{L_\infty(\Omega_0)} \le C N^{-\lambda_0} \le C N^{-3}.
\end{gather*}
Hence, we obtain for the layer components of $u$ and $k \le 2$ with an inverse estimate
\begin{gather}\label{eq:layerOmega0}
\begin{gathered}
	|(u-S)-(u-S)^\star|_{k,\Omega_0} \le |(u-S)-(u-S)^I|_{k,\Omega_0} \le |(u-S)|_{k,\Omega_0} + |(u-S)^I|_{k,\Omega_0}\\
	\le C\big( \eps^{1/4-k/2} N^{-\lambda_0} + H^{-k} \|(u-S)^I\|_{0,\Omega_0} \big)
	\le C\big( \eps^{1/4-k/2} N^{-3} + N^{k-3} \big).
\end{gathered}
\end{gather}
For the smooth solution component $S$ we estimate
\begin{subequations}
\label{eq:smoothOmega0}
\begin{gather}
	|S-S^\star|_{k,\Omega_0} = |S-S^I|_{k,\Omega_0} \le C H^{2-k} |S|_{2,\Omega_0} \le C N^{k-2}\quad \text{for $k=0,1$}
\end{gather}
and
\begin{gather}
	|S-S^\star|_{2,\Omega_0} = |S-S^I|_{2,\Omega_0} \le C H |S|_{3,\Omega_0} \le C \eps^{-1/2} N^{-1}.
\end{gather}
\end{subequations}
Obviously these bounds can be improved to $|S-S^\star|_{k,\Omega_0} \le C N^{k-3}$ if $|S|_{3} < C$.

In the remainder of the domain the elements of the Shishkin mesh are anisotropic. For the smooth part $S$ we use Lemma \ref{lem:anisoL2suboptimal}, for instance in $\Omega_1$:
\begin{gather}
\label{eq:smoothOmega1}
\begin{aligned}
	\|S-\Pi^y S\|_{0,\Omega_1} &\le C \sum_{|\boldsymbol{\alpha}| = 2} \big( \boldsymbol{h}_M^{\boldsymbol{\alpha}} \|\boldsymbol{D}^{\boldsymbol{\alpha}} S\|_{0,\Omega_1} + \boldsymbol{h}_M^{\boldsymbol{\alpha} + (0,1)} \|\boldsymbol{D}^{\boldsymbol{\alpha}} S_y\|_{0,\Omega_1} \big) \\
	&\le C (H^{2} + \eps^{-1/2} H^2 h) \le C N^{-2}.
\end{aligned}
\end{gather}
If $|S|_{3,\Omega_1} < C$ we could improve the estimate to $\|S-\Pi^y S\|_{0,\Omega_1} \le C N^{-3}$ using \eqref{eq:PianxxM}. In the other subdomains $\Omega_i$ for $i=2,3,4$ the smooth part is estimated similarly.
For the layer term $E_1$ Lemma \ref{lem:anisoL2suboptimal} yields 
\begin{align}
	\|E_1 - \Pi^y E_1\|_{0,\Omega_1} &\le C \sum_{|\boldsymbol{\alpha}| = 2} \big( \boldsymbol{h}_M^{\boldsymbol{\alpha}} \|\boldsymbol{D}^{\boldsymbol{\alpha}} E_1\|_{0,\Omega_1} + \boldsymbol{h}_M^{\boldsymbol{\alpha} + (0,1)} \|\boldsymbol{D}^{\boldsymbol{\alpha} + (0,1)} E_1\|_{0,\Omega_1} \big)\notag\\
	& \le H^2 \|\boldsymbol{D}^{(2,0)} E_1\|_{0,\Omega_1} + H h \|\boldsymbol{D}^{(1,1)} E_1\|_{0,\Omega_1} + h^2 \|\boldsymbol{D}^{(0,2)} E_1\|_{0,\Omega_1}\notag\\
	&\quad + H^2 h \|\boldsymbol{D}^{(2,1)} E_1\|_{0,\Omega_1} + H h^2 \|\boldsymbol{D}^{(1,2)} E_1\|_{0,\Omega_1} + h^3 \|\boldsymbol{D}^{(0,3)} E_1\|_{0,\Omega_1}\notag\\
	& \le C \big( N^{-2} \eps^{1/4} + \eps^{1/2} N^{-2} \ln N \eps^{-1/4} + \eps N^{-2} (\ln N)^2 \eps^{-3/4}\notag\\
	&\quad + \eps^{1/2} N^{-3} \ln N \eps^{-1/4} + \eps N^{-3} (\ln N)^2 \eps^{-3/4} + \eps^{3/2} N^{-3} (\ln N)^3 \eps^{-5/4}\notag\\
	& \le C \eps^{1/4} N^{-2} (\ln N)^2.\label{eq:EiOmegai}
\end{align}
If $\|\boldsymbol{D}^{(3,0)} E_1\|_{0,\Omega_1} \le C$ this bound can be improved to $\|E_1 - \Pi^y E_1\|_{0,\Omega_1} \le C N^{-3} (\ln N)^3$ with \eqref{eq:PianxxM}. With the same technique one can estimate the layer component $E_i$ on $\Omega_i$, $i=2,3,4$. The other layer components are small on $\Omega_1$, for instance for the corner layer $E_{12}$ it holds
\begin{gather}
\label{eq:EiOmegaj}
\begin{aligned}
	\|E_{12} - \Pi^y E_{12}\|_{0,\Omega_1} &\le \|E_{12}\|_{0,\Omega_1} + (\meas{\Omega_1})^{1/2} \|\Pi^y E_{12}\|_{L_\infty(\Omega_1)}\\
	& \le C (\meas{\Omega_1})^{1/2} \big( \|E_{12}\|_{L_\infty(\Omega_1)} + h \|\boldsymbol{D}^{(0,1)} E_{12}\|_{L_\infty(\Omega_1)} \big)\\
	& \le C \eps^{1/4} (\ln N)^{1/2} \big( N^{-\lambda_0} + \eps^{1/2} N^{-1} \ln N \eps^{-1/2} N^{-\lambda_0} \big)\\
	& \le C \eps^{1/4} N^{-\lambda_0} (\ln N)^{1/2}.
\end{aligned}
\end{gather}
Here we used the stability estimate \eqref{eq:Piainfstab}. Proceed similarly for all layer components $E_j$ on $\Omega_i$ with $j \in \{1,\dots,4,12,23,34,41\}$ and $i=1,\dots,4$ with $i\neq j$. Now collect \eqref{eq:modestim} for $\boldsymbol{\alpha}=(0,0)$, \eqref{eq:estimOmegah}, \eqref{eq:layerOmega0}, \eqref{eq:smoothOmega0} with $k=0$, \eqref{eq:smoothOmega1}, \eqref{eq:EiOmegai} and \eqref{eq:EiOmegaj} to obtain \eqref{eq:l2estimatea}.

Next if we want to estimate $\eps^{1/4} |u-u^\star|_1$ it remains to estimate the error on the anisotropic elements, for instance on $\Omega_1$. There the smooth solution component can be bounded with \eqref{eq:PianxxM}. Let $|\boldsymbol{\gamma}| = 1$, then
\begin{gather}\label{eq:H1smoothOmega1}
\begin{aligned}
	\|\boldsymbol{D}^{\boldsymbol{\gamma}} (S- \Pi^y S)\|_{0,\Omega_1} &\le C \sum_{|\boldsymbol{\alpha}|=2} \boldsymbol{h}_M^{\boldsymbol{\alpha}} \|\boldsymbol{D}^{\boldsymbol{\alpha} + \boldsymbol{\gamma}} S \|_{0, \Omega_1}
	\le C N^{-2} \|S\|_{W_{3,\infty}(\Omega_1)} (\meas \Omega_1)^{1/2}\\
	&\le C N^{-2} \eps^{-1/2} \eps^{1/4} (\ln N)^{1/2} \le C \eps^{-1/4} N^{-2} (\ln N)^{1/2}.
\end{aligned}
\end{gather}
The other domains $\Omega_i$, $i=2,3,4$ are treated similarly. From \eqref{eq:PianxxM} we deduce for the boundary layer component $E_1$ that
\begin{gather}\label{eq:H1EiOmegaiy}
\begin{multlined}
	\|\boldsymbol{D}^{(0,1)} (E_1 - \Pi^y E_1)\|_{0,\Omega_1} \le C \sum_{|\boldsymbol{\alpha}|=2} \boldsymbol{h}_M^{\boldsymbol{\alpha}} \|\boldsymbol{D}^{\boldsymbol{\alpha} + (0,1)} E_1 \|_{0, \Omega_1}\\
	\le C \big( H^2 \|\boldsymbol{D}^{(2,1)} E_1\|_{0, \Omega_1} + H h \|\boldsymbol{D}^{(1,2)} E_1\|_{0, \Omega_1} + h^2 \|\boldsymbol{D}^{(0,3)} E_1\|_{0, \Omega_1} \big)\\
	\le C \big( N^{-2} \eps^{-1/4} + \eps^{1/2} N^{-2} \ln N \eps^{-3/4} + \eps N^{-2} (\ln N)^2 \eps^{-5/4} \le C \eps^{-1/4} N^{-2} (\ln N)^2.
\end{multlined}
\end{gather}
The derivative with respect to $x$ is better behaved and the same bound holds true: 
\begin{gather}\label{eq:H1EiOmegaix}
\begin{multlined}
	\|\boldsymbol{D}^{(1,0)} (E_1 - \Pi^y E_1)\|_{0,\Omega_1} \le C \sum_{|\boldsymbol{\alpha}|=2} \boldsymbol{h}_M^{\boldsymbol{\alpha}} \|\boldsymbol{D}^{\boldsymbol{\boldsymbol{\alpha}} + (1,0)} E_1 \|_{0, \Omega_1}\\
	\le C \big( H^2 \|\boldsymbol{D}^{(3,0)} E_1\|_{0, \Omega_1} + H h \|\boldsymbol{D}^{(2,1)} E_1\|_{0, \Omega_1} + h^2 \|\boldsymbol{D}^{(1,2)} E_1\|_{0, \Omega_1} \big)\\
	\le C \big( N^{-2} \eps^{-1/4} + \eps^{1/2} N^{-2} \ln N \eps^{-1/4} + \eps N^{-2} (\ln N)^2 \eps^{-3/4} \le C \eps^{-1/4} N^{-2} (\ln N)^2.
\end{multlined}
\end{gather}
Obviously this bound holds also on $\Omega_3$ where the anisotropy of the elements is in the same direction compared to $\Omega_1$. In $\Omega_2$ (or $\Omega_4$) we use inverse estimates and the stability of $\Pi^x$:
\begin{align}
	&| E_1 - \Pi^x E_1 |_{1,\Omega_2} \le | E_1 |_{1,\Omega_2} + C h^{-1} \| \Pi^x E_1\|_{0,\Omega_2}\notag\\
	&\qquad\le C (\meas \Omega_2)^{1/2} \big( | E_1 |_{W_{1,\infty}(\Omega_2)} + h^{-1} ( \| E_1\|_{L_\infty(\Omega_2)} + h \| \boldsymbol{D}^{(1,0)} E_1\|_{L_\infty(\Omega_2)} ) \big)\label{eq:H1EiOmegaj}\\
	&\qquad\le C \eps^{1/4} (\ln N)^{1/2} \big( \eps^{-1/2} N^{-\lambda_0} + \eps^{-1/2} N (\ln N)^{-1} N^{-\lambda_0} + N^{-\lambda_0}\big) \le C \eps^{-1/4} N^{-2} (\ln N)^{-1/2}\!\!.\notag
\end{align}
Clearly, this technique can also be applied to estimate $E_i$, $i=2,3,4$. The corner layer components are bounded in exactly the same way. Consider for instance $E_{12}$ on $\Omega_1$:
\begin{align}
	&| E_{12} - \Pi^y E_{12} |_{1,\Omega_1} \le | E_{12} |_{1,\Omega_1} + C h^{-1} \| \Pi^y E_{12}\|_{0,\Omega_1}\notag\\
	&\qquad\le C (\meas \Omega_1)^{1/2} \big( | E_{12} |_{W_{1,\infty}(\Omega_1)} + h^{-1} ( \| E_{12} \|_{L_\infty(\Omega_1)} + h \| \boldsymbol{D}^{(0,1)} E_{12}\|_{L_\infty(\Omega_1)} ) \big)\label{eq:H1EikOmegaj}\\
	&\qquad\le C \eps^{1/4} (\ln N)^{1/2} \big( \eps^{-1/2} N^{-\lambda_0} + \eps^{-1/2} N (\ln N)^{-1} N^{-\lambda_0} \big) \le C \eps^{-1/4} N^{-2} (\ln N)^{-1/2}.\notag
\end{align}
Collecting \eqref{eq:modestim} for $|\boldsymbol{\alpha}|=1$, \eqref{eq:estimOmegah}, \eqref{eq:layerOmega0}, \eqref{eq:smoothOmega0} with $k=1$, \eqref{eq:H1smoothOmega1}, \eqref{eq:H1EiOmegaiy}, \eqref{eq:H1EiOmegaix}, \eqref{eq:H1EiOmegaj} and \eqref{eq:H1EikOmegaj} yields \eqref{eq:h1estimatea}.

Finally, we consider second order derivatives. Unfortunately $u^\star \not \in H^2(\Omega)$. However, $u^\star \in H^2(T)$ for all $T \in \Omega^N$ and even $u^\star \in H^2(M)$ for all $M \in \mathcal{M}^N$. Hence, we introduce the abbreviation  $\|v\|_{0,\mathcal{M}(V)} \coloneqq (\sum_{M \in \mathcal{M}, M \subset V} \|v\|_{0,M}^2)^{1/2}$. Now Let $|\boldsymbol{\gamma}| = 2$, then by \eqref{eq:PianxxM} and \eqref{eq:PiaxxM} we find for instance in $\Omega_1$ that
\begin{align}
	\|\boldsymbol{D}^{\boldsymbol{\gamma}} (S- \Pi^y S)\|_{0,\mathcal{M}(\Omega_1)} &\le C \bigg(\sum_{|\boldsymbol{\alpha}|=1} \boldsymbol{h}_M^{\boldsymbol{\alpha}} \|\boldsymbol{D}^{\boldsymbol{\alpha} + \boldsymbol{\gamma}} S \|_{0, \Omega_1} + \sum_{|\boldsymbol{\alpha}|=2} \frac{\boldsymbol{h}_M^{\boldsymbol{\alpha}}}{H} \|\boldsymbol{D}^{\boldsymbol{\alpha}} \boldsymbol{D}^{(1,0)} S\|_{0,\Omega_1} \bigg)\notag\\
	&\le C N^{-1} \|S\|_{W_{3,\infty}(\Omega_1)} (\meas \Omega_1)^{1/2}\label{eq:H2smoothOmega1}\\
	&\le C N^{-1} \eps^{-1/2} \eps^{1/4} (\ln N)^{1/2} \le C \eps^{-1/4} N^{-1} (\ln N)^{1/2}.\notag
\end{align}
Similar bounds hold on $\Omega_i$ for $i=2,3,4$. In oder to obtain bounds for the layer components $E_i$ on $\Omega_i$ ($i=1,\dots,4$) we use \eqref{eq:PianxxM} and \eqref{eq:PiaxxM} more careful.
\begin{align}
	\|\boldsymbol{D}^{(0,2)} (E_1- \Pi^y E_1)\|_{0,\mathcal{M}(\Omega_1)} &\le C \sum_{|\boldsymbol{\alpha}|=1} \boldsymbol{h}_M^{\boldsymbol{\alpha}} \|\boldsymbol{D}^{\boldsymbol{\alpha} + (0,2)} E_1 \|_{0, \Omega_1}\notag\\
& \le C \big( H \|\boldsymbol{D}^{(1,2)} E_1 \|_{0, \Omega_1} + h \|\boldsymbol{D}^{(0,3)} E_1 \|_{0, \Omega_1} \big)\label{eq:H2EiOmegaiyy}\\
& \le C \big( N^{-1} \eps^{-3/4} + \eps^{1/2} N^{-1} \ln N \eps^{-5/4} \big) \le \eps^{-3/4} N^{-1} \ln N, \allowdisplaybreaks[1]\notag\\
	\|\boldsymbol{D}^{(1,1)} (E_1- \Pi^y E_1)\|_{0,\mathcal{M}(\Omega_1)} &\le C \sum_{|\boldsymbol{\alpha}|=1} \boldsymbol{h}_M^{\boldsymbol{\alpha}} \|\boldsymbol{D}^{\boldsymbol{\alpha} + (1,1)} E_1 \|_{0, \Omega_1}\notag\\
& \le C \big( H \|\boldsymbol{D}^{(2,1)} E_1 \|_{0, \Omega_1} + h \|\boldsymbol{D}^{(1,2)} E_1 \|_{0, \Omega_1} \big)\label{eq:H2EiOmegaixy}\\
& \le C \big( N^{-1} \eps^{-1/4} + \eps^{1/2} N^{-1} \ln N \eps^{-3/4} \big) \le \eps^{-1/4} N^{-1} \ln N,\allowdisplaybreaks[1]\notag
\end{align}

\vspace*{-8mm}
\begin{align}
	\|\boldsymbol{D}^{(2,0)} (E_1- \Pi^y E_1)\|_{0,\mathcal{M}(\Omega_1)} &\le C \bigg( \sum_{|\boldsymbol{\alpha}|=1} \!\boldsymbol{h}_M^{\boldsymbol{\alpha}} \|\boldsymbol{D}^{\boldsymbol{\alpha} + (2,0)} E_1 \|_{0, \Omega_1} + \sum_{|\boldsymbol{\alpha}|=2} \!\frac{\boldsymbol{h}_M^{\boldsymbol{\alpha}}}{H} \|\boldsymbol{D}^{\boldsymbol{\alpha}} \boldsymbol{D}^{(1,0)} E_1\|_{0,\Omega_1} \bigg)\notag\\
&\hspace*{-2cm} \le C \big( H \|\boldsymbol{D}^{(3,0)} E_1 \|_{0, \Omega_1} + h \|\boldsymbol{D}^{(2,1)} E_1 \|_{0, \Omega_1} + h^2 H^{-1} \|\boldsymbol{D}^{(1,2)} E_1 \|_{0, \Omega_1} \big)\notag\\
&\hspace*{-2cm} \le C \big( N^{-1} \eps^{-1/4} + \eps^{1/2} N^{-1} \ln N \eps^{-1/4} + \eps N^{-1} (\ln N)^2 \eps^{-3/4} \big)\label{eq:H2EiOmegaixx}\\
&\hspace*{-2cm} \le C \big( \eps^{-1/4} N^{-1} + \eps^{1/4} N^{-1} (\ln N)^2 \big).\notag
\end{align}
The same technique can be used to bound the error of $E_i$ on the anisotropic part of the Shishkin mesh along the opposite edge. In $\Omega_2$ (or $\Omega_4$) inverse estimates and the stability of $\Pi^x$ yield again:
\begin{gather}
\label{eq:H2EiOmegaj}
\begin{aligned}
	&| E_1 - \Pi^x E_1 |_{2,\mathcal{M}(\Omega_1)} \le | E_1 |_{2,\Omega_2} + C h^{-2} \| \Pi^x E_1\|_{0,\Omega_2}\\
	&\qquad\le C (\meas \Omega_2)^{1/2} \big( | E_1 |_{W_{2,\infty}(\Omega_2)} + h^{-2} ( \| E_1\|_{L_\infty(\Omega_2)} + h \| \boldsymbol{D}^{(1,0)} E_1\|_{L_\infty(\Omega_2)} ) \big)\\
	&\qquad\le C \eps^{1/4} (\ln N)^{1/2} \big( \eps^{-1} N^{-\lambda_0} + \eps^{-1} N^{2} (\ln N)^{-2} N^{-\lambda_0} + \eps^{-1/2} N (\ln N)^{-1} N^{-\lambda_0}\big)\\
	&\qquad \le C \eps^{-3/4} N^{-1} (\ln N)^{-3/2}.
\end{aligned}
\end{gather}
The corner layers are handled similarly, for instance $E_{12}$ on $\Omega_1$:
\begin{align}
	&| E_{12} - \Pi^y E_{12} |_{2,\mathcal{M}(\Omega_1)} \le | E_{12} |_{2,\Omega_1} + C h^{-2} \| \Pi^y E_{12}\|_{0,\Omega_1}\notag\\
	&\qquad\le C (\meas \Omega_1)^{1/2} \big( | E_{12} |_{W_{2,\infty}(\Omega_1)} + h^{-2} ( \| E_{12} \|_{L_\infty(\Omega_1)} + h \| \boldsymbol{D}^{(0,1)} E_{12}\|_{L_\infty(\Omega_1)} ) \big)\notag\\
	&\qquad\le C \eps^{1/4} (\ln N)^{1/2} \big( \eps^{-1} N^{-\lambda_0} + \eps^{-1} N^{2} (\ln N)^{-2} N^{-\lambda_0} + \eps^{-1/2} N (\ln N)^{-1} \eps^{-1/2} N^{-\lambda_0} \big)\notag\\
	&\qquad\le C \eps^{-3/4} N^{-2} (\ln N)^{-1/2}.\label{eq:H2EikOmegaj}
\end{align}
Collect \eqref{eq:modestim} for $|\boldsymbol{\alpha}|=2$, \eqref{eq:estimOmegah}, \eqref{eq:layerOmega0}, \eqref{eq:smoothOmega0} with $k=2$, \eqref{eq:H2smoothOmega1}, \eqref{eq:H2EiOmegaiyy}, \eqref{eq:H2EiOmegaixy}, \eqref{eq:H2EiOmegaixx}, \eqref{eq:H2EiOmegaj} and \eqref{eq:H2EikOmegaj} to obtain \eqref{eq:h2estimatea}. The other assertions of the Lemma follow easily.
\end{proof}

After quantifying the approximation properties of $u^\star$ we want to study certain traces of $u-u^\star$ along interior edges. 

Since $\Omega^N$ is an admissible triangulation two elements $T_1,T_2 \in \Omega^N$ define traces of a function $v\in H^1(T_1 \cup T_2) \cap H^2(T_1) \cap H^2(T_2)$ along an interior edge $e$. We associate a unit normal vector $n$ with each edge. If $e\subset \partial \Omega$ is an edge along the boundary we define $n$ as the unit outer normal to $\partial \Omega$. In a similar manner there are two traces of the normal derivative $\frac{\partial v}{\partial n} \in L_2(e)$. Assuming $n$ is oriented from $T_1$ to $T_2$ we obtain jumps $\jump{\frac{\partial v}{\partial n}}$ of these traces as follows:
\begin{gather*}
	\bigjump{\frac{\partial v}{\partial n}} \coloneqq \left.\frac{\partial v}{\partial n}\right|_{T_1} - \left.\frac{\partial v}{\partial n}\right|_{T_2} \in L_2(e).
\end{gather*}

\begin{lem}\label{lem:tracel2estim}
	Suppose $\eps^{1/4} \le (\ln N)^{-2}$. Then there is a positive constant $C$ such that
	\begin{align}
		\|u-u^\star\|_{0,e}^2 &\le C N^{-5}\qquad \text{on a long edge $e$, i.e.~of type I or II},\label{eq:tracelong}\\
		\sum_{e \in \mathcal{E}(III)} \|u - u^\star\|_{0,e}^2 &\le C \eps^{-1/2} N^{-5} (\ln N)^2,\label{eq:traceIII}\\
		\sum_{e \in \mathcal{E}(IV)} \|u - u^\star\|_{0,e}^2 &\le C \eps^{1/2} N^{-5} (\ln N)^3\label{eq:traceIV}. 
	\end{align}
\end{lem}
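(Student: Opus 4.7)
The plan is to combine the anisotropic trace inequality
\[
\|v\|_{0,e}^2 \le C\bigl(h_\perp^{-1}\|v\|_{0,T}^2 + h_\perp \|\partial_\perp v\|_{0,T}^2\bigr),
\]
(valid for any axis-aligned rectangle $T\supset e$, where $h_\perp$ is the dimension of $T$ transverse to $e$ and $\partial_\perp$ the corresponding normal derivative) and its coarser corollary $\|v\|_{0,e}^2 \le |e|\,\|v\|_{L_\infty(T)}^2$ with the solution decomposition of Lemma~\ref{lem:soldec} applied componentwise to $v=u-u^\star$. On long edges the $L_\infty$ bound will suffice, while on short edges I will exploit the tensor product factorization $\Pi^y=L_x M_y$ (and the symmetric $\Pi^x$) to collapse the analysis to the one-dimensional $C^1$--$P_2$ macro interpolation of Section~\ref{sec:univariate}.

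For a single long edge of type I or II with adjacent element $T$ of long side $H\sim N^{-1}$, the claim reduces to $\|u-u^\star\|_{L_\infty(T)} \le CN^{-2}$, since then $\|u-u^\star\|_{0,e}^2\le H\cdot CN^{-4}=CN^{-5}$. On $T\subset\overline{\Omega_0}$ the smooth component satisfies $\|S-S^I\|_{L_\infty(T)}\le C H^2 |S|_{W_{2,\infty}}\le CN^{-2}$ by~\eqref{eq:soldecestimS}, while each layer component is pointwise bounded by $C N^{-\lambda_0}\le C N^{-3}$ on $\Omega_0$ and the nodal $L_\infty$-stability transfers the same bound to the interpolation error. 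On an anisotropic $T\subset\overline{\Omega_1}$ I would use the splitting $u-L_xM_y u=(u-L_x u)+L_x(u-M_y u)$: for the own layer $E_1$ the first summand is $O(H^2)$ because $|\partial_x^2 E_1|_\infty\le C$, and the second is $O(h^3\eps^{-3/2})=O(N^{-3}(\ln N)^3)\le CN^{-2}$ for $N$ large; the smooth part, remote boundary and corner layers, and the modification term controlled by~\eqref{eq:modestim} are handled analogously.

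For the sum over type III edges the crucial observation is that on a vertical short edge $e=\{x_i\}\times(y_{j-1},y_j)$ of an $\Omega_1$ macro, the coordinate $x_i$ is a Lagrange node of $L_x$, and therefore $\Pi^y u|_e = M_y\bigl(u(x_i,\cdot)\bigr)|_e$ equals the one-dimensional $C^1$--$P_2$ macro interpolant in $y$. Applying the univariate bound $\|g-M_y g\|_{L_\infty}\le C h^3\|\partial_y^3 g\|_\infty$ to $g=E_1(x_i,\cdot)$ together with $\|\partial_y^3 E_1(x_i,\cdot)\|_{L_\infty([y_{j-1},y_{j+1}])}\le C\eps^{-3/2}e^{-c^\star y_{j-1}/\sqrt\eps}$ yields $\|E_1-\Pi^y E_1\|_{0,e}^2\le C h\,N^{-6}(\ln N)^6\, e^{-2c^\star y_{j-1}/\sqrt\eps}$; summing the geometric series $\sum_j e^{-2c^\star jh/\sqrt\eps}\sim\sqrt\eps/h\sim N/\ln N$ and then over $x_i$ produces a total of order $\sqrt\eps N^{-5}(\ln N)^6$, which is bounded by $C\eps^{-1/2}N^{-5}(\ln N)^2$ thanks to $\eps^{1/4}\le(\ln N)^{-2}$. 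The smooth and foreign-layer contributions are much smaller, and symmetric arguments treat $\Omega_2\cup\Omega_4$. For type IV edges the classical summed trace inequality $\sum_e\|v\|_{0,e}^2\le C(h^{-1}\|v\|_{0,\Omega_f}^2 + h|v|_{1,\Omega_f}^2)$ together with the corner estimate~\eqref{eq:estimOmegah} for $k=0,1$ yields $C\sqrt\eps N^{-5}(\ln N)^3$ directly.

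The main obstacle is avoiding a spurious $H^{-1}$ factor on the type III sum: the naive two-dimensional trace inequality combined with the global $L_2$-estimates of Lemma~\ref{lem:l2estimates} produces an $H^{-1}\|v\|_{0,\Omega_a}^2\sim N^{-3}$ term that is far too coarse when $\eps$ is close to $(\ln N)^{-8}$. The tensor-product factorization $\Pi^y=L_xM_y$ is indispensable, because on short vertical edges it reduces the error analysis to univariate macro interpolation and lets the cubic convergence order cooperate with the exponential decay of $\partial_y^3 E_1$ in $y$ to recover the claimed bound.
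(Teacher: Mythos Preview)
Your proposal is correct and, for the type-III sum, takes a genuinely different route from the paper.

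For long edges and for type-IV edges your argument is essentially the paper's: an $L_\infty$ bound on the adjacent element (respectively, the summed trace inequality combined with \eqref{eq:estimOmegah}) gives exactly the stated rates. The paper phrases the long-edge argument slightly differently, observing that on a long horizontal edge $\Pi^y$ restricts to one-dimensional quadratic Lagrange interpolation in $x$, but your tensor splitting $u-L_xM_yu=(u-L_xu)+L_x(u-M_yu)$ achieves the same $O(N^{-2})$ pointwise control and is arguably cleaner on the interior horizontal edges of a macro.

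For the type-III sum the paper does \emph{not} reduce to one dimension. Instead it applies the anisotropic multiplicative trace inequality (Lemma~\ref{lem:amti}) on each short edge,
\[
\|v\|_{0,e}^2 \le C\Bigl(\|v\|_{0,T}\|v_x\|_{0,T}+H^{-1}\|v\|_{0,T}^2\Bigr),
\]
with $v=E_1-\Pi^yE_1$, and then sums using the two-dimensional estimates \eqref{eq:PianxxM} for $\|v\|_{0,\Omega_1}$ and $\|v_x\|_{0,\Omega_1}$; this produces the stated $\eps^{-1/2}N^{-5}(\ln N)^2$ directly. Your observation that the short edges sit at Lagrange nodes $x_i$, so that $\Pi^yu|_e=M_y\bigl(u(x_i,\cdot)\bigr)$, bypasses the two-dimensional machinery entirely and yields the sharper intermediate bound $\sqrt\eps\,N^{-5}(\ln N)^6$, which under the hypothesis $\eps^{1/4}\le(\ln N)^{-2}$ is comfortably inside the claimed estimate. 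The trade-off is that the paper's approach recycles the global $L_2$ estimates already proved in Lemma~\ref{lem:l2estimates}, whereas yours requires the univariate error bound for $M_y$ and a geometric summation in $y$; on the other hand, your route avoids the delicate balance between the $H^{-1}$ term and the $L_2$ error that you correctly flag as the main obstacle. One small omission: on the top row of $\Omega_1$-macros the modification term $(\partial_y u^I-\partial_y u)(x_i,y_{N/4})\,\psi_{N/4}(y)$ also contributes on the short edges, but its $L_2(e)$-norm is $O(\eps^{1/4}N^{-7/2}(\ln N)^{3/2})$ and is negligible after summation.
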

\begin{proof}
	Let $e \subset \overline\Omega \setminus \Omega_f$ denote a long type-I or type-II edge of a possibly anisotropic element.
	For instance, on a long edge $e \subset \Omega_1$ the interpolant $\Pi^y v$ of $v$ is a quadratic polynomial which is uniquely described by its values in the endpoints and the midpoint of $e$. Hence, on long edges $\Pi^y$ coincides with the 1D Lagrange interpolation and we find that
	\begin{gather}\label{eq:tracesmoothEiOmegai}
	\begin{aligned}
		\|(S + E_1)- \Pi^y (S + E_1)\|_{0,e}^2 &\le \meas(e) \|(S + E_1) - \Pi^y (S + E_1)\|_{L_\infty(e)}^2\\
		&\le C H\,H^{-4} \|(S + E_1)_{yy}\|_{L_\infty(e)}^2 \le C N^{-5}.
	\end{aligned}
	\end{gather}
	Any other layer component $E \coloneqq u - S - E_1$ is estimated using a stability argument of the interpolation operator involved on a macro-element $M$ that is adjacent to $e \subset M$:
	\begin{gather}\label{eq:tracelayerOmegai}
	\begin{aligned}
		\|E - \Pi^y E\|_{0,e}^2 &\le \meas(e) \|E - \Pi^y E\|_{L_\infty(e)}^2 \le C H \big( \|E\|_{L_\infty(M)}^2 + \| \Pi^y E\|_{L_\infty(M)}^2 \big) \\
		&\le C H \big( \|E\|_{L_\infty(M)}^2 + h \|E_y\|_{L_\infty(M)}^2 \big) \le C N^{-\lambda_0 - 1} \le C N^{-7}.
	\end{aligned}	
	\end{gather}
	Similarly to \eqref{eq:tracesmoothEiOmegai}, we estimate the smooth part $S$ on any edge $e \subset \overline \Omega_0$ in the interior subdomain:
	\begin{gather}\label{eq:tracesmoothOmega0}
		\|S - S^I\|_{0,e}^2 \le \meas(e) \|S - S^I\|_{L_\infty(e)}^2 \le C H\,H^{-4} |S|_{W_{2,\infty}(e)}^2 \le C N^{-5}.
	\end{gather}
	Next we use that all the layer components $E \coloneqq u - S$ have declined sufficiently. Let $T \subset \overline{\Omega_0}$ denote an element that has the edge $e$, then
	\begin{gather}\label{eq:tracelayerOmega0}
	\begin{aligned}
		\|E - E^I\|_{0,e}^2 &\le \meas(e) \|E - E^I\|_{L_\infty(e)}^2 \le C H \big( \|E\|_{L_\infty(T)}^2 + \| E^I\|_{L_\infty(T)}^2 \big) \\
		&\le C H \|E\|_{L_\infty(T)}^2 \le C N^{-\lambda_0 - 1} \le C N^{-7}.
	\end{aligned}
	\end{gather}
	Collecting \eqref{eq:tracesmoothEiOmegai}, \eqref{eq:tracelayerOmegai}, \eqref{eq:tracesmoothOmega0} and \eqref{eq:tracelayerOmega0} gives \eqref{eq:tracelong}. 

	Now we consider the short type-III edge $e$ of an anisotropic element $T$ for instance in $\Omega_1$. We use the trace Lemma \ref{lem:tracelem} and \eqref{eq:PianxxM}:
	\begin{gather}\label{eq:smoothtraceIII}
		\begin{aligned}
		\|S - \Pi^y S\|_{0,e}^2 &\le C \|S - \Pi^y S\|_{0,T} \|(S - \Pi^y S)_x\|_{0,T} + \frac{1}{H} \|S - \Pi^y S\|_{0,T}^2\le C H^5 |S|_{3,M}^2\\ &\le C \meas(M) H^5 |S|_{W_{3,\infty}(M)}^2 \le C \eps^{1/2} N^{-2} \ln N\, N^{-5} \eps^{-1} \le C \eps^{-1/2} N^{-7} \ln N
		\end{aligned}
	\end{gather}
	Here $M$ denotes the macro-element such that $T \subset M$. Similarly, we obtain for the layer $E_1$
	\begin{gather*}
		\|E_1 - \Pi^y E_1\|_{0,e}^2 \le C \|E_1 - \Pi^y E_1\|_{0,T} \|(E_1 - \Pi^y E_1)_x\|_{0,T} + \frac{1}{H} \|E_1 - \Pi^y E_1\|_{0,T}^2.
	\end{gather*}
	Hence, a summation over all type-III edges gives with Young's inequality
	\begin{gather}\label{eq:E1traceIII}
		\sum_{e \in \mathcal{E}(III)} \|E_1 - \Pi^y E_1\|_{0,e}^2 \le C \eps^{-1/2} N^{-5} (\ln N)^2, 
	\end{gather}
	due to \eqref{eq:H1EiOmegaix} and a similar estimate with \eqref{eq:PianxxM} and $\eps^{1/4} \le (\ln N)^{-2}$ for $\|E_1 - \Pi^y E_1\|_{0,\Omega_1}$, namely
	\begin{align*}
			\| E_1 - \Pi^y E_1 \|_{0,\Omega_1} &\le C \sum_{|\boldsymbol{\alpha}|=3} \boldsymbol{h}_M^{\boldsymbol{\alpha}}\|\boldsymbol{D}^{\boldsymbol{\alpha}} E_1 \|_{0, \Omega_1}
			\le C \big( H^3 \|\boldsymbol{D}^{(3,0)} E_1\|_{0, \Omega_1} \\
			&\quad + H^2 h \|\boldsymbol{D}^{(2,1)} E_1\|_{0, \Omega_1} + H h^2 \|\boldsymbol{D}^{(1,2)} E_1\|_{0, \Omega_1} + h^3 \|\boldsymbol{D}^{(0,3)} E_1\|_{0, \Omega_1} \big)\\
			&\le C \big( \eps^{-1/4} N^{-3} + \eps^{1/4} N^{-3} (\ln N)^3 \big).
	\end{align*}
	The other layer components can be estimated like in \eqref{eq:tracelayerOmegai}. With \eqref{eq:smoothtraceIII} and \eqref{eq:E1traceIII} we arrive at \eqref{eq:traceIII}.
	
	For the short type-IV edges of $\Omega_f$ close to the corners of the domain we again use the a trace Lemma and \eqref{eq:estimOmegah} to obtain
	\begin{gather*}
		\sum_{e \in \mathcal{E}(IV)} \|u - \tilde \Pi u\|_{0,e}^2 \le C \eps^{1/2} N^{-5} (\ln N)^3,
	\end{gather*}
	which is \eqref{eq:traceIV}.
\end{proof}

\begin{lem}[Anisotropic multiplicative trace inequality]\label{lem:amti}
	Let $T$ be a rectangle with sides parallel to the coordinate axes and a width in $x$-direction of $h_x$. Let $\partial T_y$ denote the union of the two edges parallel to the $y$-axis. Then for $v \in W_{1,p}(T)$ we have the estimate
	\begin{gather}
		\|v\|^p_{L_p(\partial T_y)} \le p \|v\|_{L_p(T)}^{p-1} \|v_x\|_{L_p(T)} + \frac{2}{h_x} \|v\|_{L_p(T)}^p\quad \text{for $p\in[1,\infty)$},\\
		\|v\|_{L_\infty(\partial T_y)} \le \|v\|_{L_\infty(T)}.
	\end{gather}
\end{lem}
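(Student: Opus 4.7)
The plan is to establish the $L_p$ inequality by the standard anisotropic trace argument based on the fundamental theorem of calculus in the $x$-direction, being careful to use a single reference point to save a factor of two. The $L_\infty$ bound is immediate from the fact that $\partial T_y \subset \overline T$ and the continuous representative of a $W_{1,\infty}$ function.

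Without loss of generality I may translate and take $T = [0,h_x]\times[c,d]$, so that $\partial T_y = (\{0\}\cup\{h_x\})\times [c,d]$. By density it is enough to prove the estimate for $v \in C^1(\overline T)$. Fix $y \in [c,d]$ and set $f(x) \coloneqq |v(x,y)|^p$; a standard regularization (replace $|v|$ by $(|v|^2+\delta)^{1/2}$ and pass to the limit $\delta \downarrow 0$) shows that $f$ is absolutely continuous on $[0,h_x]$ with $|f'(x)| \le p|v(x,y)|^{p-1} |v_x(x,y)|$ almost everywhere, which handles the minor issue at zeros of $v$ uniformly in $p\ge 1$. For any intermediate point $x_0 \in [0,h_x]$ the identities
\begin{gather*}
f(0) = f(x_0) - \int_0^{x_0} f'(t)\,\D t,\qquad f(h_x) = f(x_0) + \int_{x_0}^{h_x} f'(t)\,\D t
\end{gather*}
imply, after summing and dominating the two integrals by their absolute values,
\begin{gather*}
f(0) + f(h_x) \le 2 f(x_0) + \int_0^{h_x} |f'(t)|\,\D t.
\end{gather*}
Here it is crucial to split the $t$-integral at the same reference point $x_0$ rather than to bound each trace separately; otherwise a spurious factor of two appears in front of the derivative term.

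Next I will average the previous inequality over $x_0 \in [0,h_x]$ (this turns the $2f(x_0)$ term into $(2/h_x)\int_0^{h_x} f$) and then integrate over $y\in [c,d]$, arriving at
\begin{gather*}
\|v\|^p_{L_p(\partial T_y)} \le \frac{2}{h_x} \|v\|_{L_p(T)}^p + p \int_T |v|^{p-1} |v_x|\,\D x\D y.
\end{gather*}
A Hölder inequality with exponents $p/(p-1)$ and $p$ then bounds the last term by $p\|v\|_{L_p(T)}^{p-1} \|v_x\|_{L_p(T)}$, which is exactly the asserted estimate. Finally, density of $C^1(\overline T)$ in $W_{1,p}(T)$ extends it to all $v \in W_{1,p}(T)$. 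For $p=\infty$ the claim is immediate: the continuous representative of $v \in W_{1,\infty}(T)$ satisfies $|v|\le \|v\|_{L_\infty(T)}$ everywhere on $\overline T \supset \partial T_y$, so the trace obeys the same pointwise bound.

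No step is really an obstacle; the only detail requiring attention is the chain rule for $|v|^p$ at the zero set of $v$, resolved by the standard $(|v|^2+\delta)^{p/2}$ regularization, and the organizational point of introducing the common reference point $x_0$ so that the constant in front of $\|v\|_{L_p(T)}^{p-1}\|v_x\|_{L_p(T)}$ is $p$ rather than $2p$.
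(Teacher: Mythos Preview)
Your proof is correct and follows essentially the same route as the paper's: both arguments integrate $|v|^p$ in the $x$-direction and then apply H\"older, arriving at the identical constants. The only cosmetic difference is that the paper packages the one-dimensional step as a divergence theorem applied to the field $(|v|^p x,0)$ with $x$ measured from the midpoint of $T$ (so that $|x|=h_x/2$ on $\partial T_y$ and $|x|\le h_x/2$ inside), whereas you use the fundamental theorem of calculus to a floating reference point $x_0$ and then average over $x_0$; these are two standard presentations of the same trace argument.
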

\begin{proof}The proof follows its isotropic version in \cite[Theorem 1.5.1.10]{G85} (or \cite[Lemma 3.1]{DFS02} in the $L_2$ setting):
	Without loss of generality we assume that the origin of the coordinate system is given by the midpoint of the rectangle $T$. The divergence theorem yields for $v \in C^1(\overline{T})$:
	\begin{gather}
		\begin{split}
			\int \limits_T \frac{\partial}{\partial x} \left(|v|^p x \right) \D x\D y &= \int \limits_T \nabla \cdot \begin{pmatrix} |v|^p x \\ 0 \end{pmatrix} \D x\D y = \int \limits_{\partial T} n \cdot \begin{pmatrix} |v|^p x \\ 0 \end{pmatrix} \D s \\
			&= \int \limits_{\partial T_y} |v|^p |x| \D s = \frac{h_x}{2} \int \limits_{\partial T_y} |v|^p \D s = \frac{h_x}{2} \|v\|^p_{L_p(\partial T_y)}.
		\end{split}
	\end{gather}
	Moreover since $|x| \le h_x/2$ on $T$ an application of the product rule and Hölder's inequality with $\frac{1}{p}+\frac{1}{q}=1$ imply
	\begin{gather*}
		\int \limits_T \frac{\partial}{\partial x} \left(|v|^p x \right) \D x\D y = \int \limits_T \frac{\partial}{\partial x} \left(|v|^p\right) x \,\D x\D y + \int \limits_T |v|^p \D x\D y = p \int \limits_T |v|^{p-2} v \frac{\partial v}{\partial x} x\,\D x\D y + \|v\|_{L_p(T)}^p\\
		\le \frac{p h_x}{2} \int \limits_T |v|^{p-1} \left|\frac{\partial v}{\partial x} \right| \D x \D y + \|v\|_{L_p(T)}^p \le \frac{p h_x}{2} \Biggl( \int \limits_T |v|^p \D x\D y \Biggr)^{1/q} \left\| \frac{\partial v}{\partial x} \right \|_{L_p(T)} + \|v\|_{L_p(T)}^p \\
		\le \frac{p h_x}{2} \|v\|_{L_p(T)}^{p-1} \left\| \frac{\partial v}{\partial x} \right \|_{L_p(T)} + \|v\|_{L_p(T)}^p.
	\end{gather*}
	The assertion follows from a standard density argument. The case $p=\infty$ is trivial.
\end{proof}

\begin{lem}\label{lem:tracelem} Let $T$ be a rectangle with sides parallel to the coordinate axes and a width in $x$-direction of $h_x$. Let $\partial T_y$ denote the union of the two edges parallel to the $y$-axis having length $h_y$. Denote by $v^I \in Q_2(T)$ the nodal interpolant of $v \in C(\bar T)$. Then for $v \in H^3(T)$ it holds
	\begin{gather}
		\left\|\left(v - v^I \right)_x \right\|_{0,\partial T_y} \le C \big( h_x^{3/2} \|v_{xxx}\|_{0,T} + \sqrt{h_x}h_y \|v_{xxy}\|_{0,T} + \frac{h_y^2}{\sqrt{h_x}} \|v_{xyy}\|_{0,T} \big).
	\end{gather}
\end{lem}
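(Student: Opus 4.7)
The plan is to apply Lemma~\ref{lem:amti} with $p=2$ to the function $w \coloneqq (v-v^I)_x \in W_{1,2}(T)$, which immediately yields
\begin{gather*}
\big\|(v-v^I)_x\big\|_{0,\partial T_y}^2 \le 2\,\big\|(v-v^I)_x\big\|_{0,T}\,\big\|(v-v^I)_{xx}\big\|_{0,T} + \frac{2}{h_x}\,\big\|(v-v^I)_x\big\|_{0,T}^2.
\end{gather*}
It therefore suffices to bound the two volume quantities $\|(v-v^I)_x\|_{0,T}$ and $\|(v-v^I)_{xx}\|_{0,T}$ by appropriate anisotropic combinations of third order derivatives of $v$.

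For this, I would invoke the anisotropic error estimates for $Q_2$ Lagrange interpolation on an axis-aligned rectangle, which are a direct consequence of the Apel--Dobrowolski theory (cf.\ Lemma~\ref{lem:lemma3} and the arguments leading to Corollary~\ref{cor:bicubic}, adapted to biquadratic Lagrange interpolation, where divided differences serve as associated functionals and the relevant polynomial space $P_2 \subset Q_2$ is reproduced). This gives
\begin{gather*}
\big\|(v-v^I)_x\big\|_{0,T} \le C\bigl(h_x^2\|v_{xxx}\|_{0,T} + h_x h_y\|v_{xxy}\|_{0,T} + h_y^2\|v_{xyy}\|_{0,T}\bigr),\\
\big\|(v-v^I)_{xx}\big\|_{0,T} \le C\bigl(h_x\|v_{xxx}\|_{0,T} + h_y\|v_{xxy}\|_{0,T}\bigr).
\end{gather*}

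Substituting these into the trace inequality, the second summand $\frac{2}{h_x}\|(v-v^I)_x\|_{0,T}^2$ is handled by $(a+b+c)^2 \le 3(a^2+b^2+c^2)$ and produces exactly the three target pure terms $h_x^3\|v_{xxx}\|^2$, $h_x h_y^2\|v_{xxy}\|^2$, $h_y^4 h_x^{-1}\|v_{xyy}\|^2$. Expanding the mixed product $\|(v-v^I)_x\|_{0,T}\cdot\|(v-v^I)_{xx}\|_{0,T}$ yields six terms, each of which I absorb into these three pure terms by Young's inequality with the aspect-ratio-neutral splittings $h_x^2 h_y = h_x^{3/2}\cdot h_x^{1/2}h_y$, $h_x h_y^2 = h_x^{3/2}\cdot h_y^2 h_x^{-1/2}$, and $h_y^3 = h_x^{1/2}h_y\cdot h_y^2 h_x^{-1/2}$. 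Taking the square root then delivers the stated bound.

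The only real obstacle is the bookkeeping in this final Young-step: the weights must be chosen so that the cross terms collapse onto the three pure terms $h_x^3\|v_{xxx}\|^2$, $h_x h_y^2\|v_{xxy}\|^2$, $h_y^4 h_x^{-1}\|v_{xyy}\|^2$ without introducing a spurious factor depending on the aspect ratio $h_y/h_x$. The key observation that makes this work is that the Apel-type prefactors in the two volume estimates are arranged precisely so that every product of a prefactor from the first with a prefactor from the second matches one of the three target scalings (up to a splitting by geometric mean), reflecting the fact that interpolation in $Q_2$ is, for our purpose, robust to anisotropy.
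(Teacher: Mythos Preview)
Your proposal is correct and follows essentially the same approach as the paper: apply the anisotropic multiplicative trace inequality (Lemma~\ref{lem:amti}) to $(v-v^I)_x$, then insert the standard anisotropic $Q_2$ interpolation error bounds for $(v-v^I)_x$ and $(v-v^I)_{xx}$. The only cosmetic difference is that the paper applies Young's inequality \emph{immediately} to the mixed term $2\|(v-v^I)_x\|_{0,T}\|(v-v^I)_{xx}\|_{0,T}$ in the trace estimate, obtaining $C\bigl(h_x^{-1}\|(v-v^I)_x\|_{0,T}^2 + h_x\|(v-v^I)_{xx}\|_{0,T}^2\bigr)$; after substituting the interpolation bounds this avoids your six-term expansion and the subsequent case-by-case Young splittings altogether, since only pure squares remain.
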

	\begin{proof} Lemma \ref{lem:amti} and Young's inequality yield
	\begin{gather*}
		\left\|\left(v - v^I \right)_x \right\|^2_{0,\partial T_y} \le C \big( \frac{1}{h_x} \left\|\left(v - v^I \right)_x \right\|^2_{0,T} + h_x \left\|\left(v - v^I \right)_{xx} \right\|^2_{0,T} \big).
	\end{gather*}
	With the well known anisotropic nodal interpolation error estimates for $v \in H^{3}(T)$:
	\begin{align*}		
		\left\|\left(v - v^I \right)_x \right\|_{0,T} &\le C \big( h_x^2 \|v_{xxx}\|_{0,T} + h_x h_y \|v_{xxy}\|_{0,T} + h_y^2 \|v_{xyy}\|_{0,T} \big),\\
		\left\|\left(v - v^I \right)_{xx} \right\|_{0,T} &\le C \big( h_x \|v_{xxx}\|_{0,T} + h_y \|v_{xxy}\|_{0,T} \big),
	\end{align*}
	we complete the proof.
	\end{proof}

\begin{lem}\label{lem:jumpomega}
	Assume $|S|_3 \le C$ and $\eps^{1/2} \le (\ln N)^{-2}$ then there is a positive constant $C$ such that
	\begin{align}
			\sum_{e \in \mathcal{E}(I)} \bigg\| \bigjump{\frac{\partial(u-u^\star)}{\partial n}} \bigg\|_{0,e}^2 &\le C N^{-3},\label{eq:jumpomega0}\\
			\sum_{e \in \mathcal{E}(III)} \bigg\| \bigjump{\frac{\partial(u-u^\star)}{\partial n}} \bigg\|_{0,e}^2 &\le C \eps^{-1/2} N^{-3} (\ln N)^4.\label{eq:jumpomega}
	\end{align}
\end{lem}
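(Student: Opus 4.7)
The starting point will be that $u\in C^3(\overline\Omega)$ implies $\bigjump{\partial_n u}_e=0$ on every interior edge, so
\begin{gather*}
\bigjump{\tfrac{\partial(u-u^\star)}{\partial n}}_e = -\bigjump{\tfrac{\partial u^\star}{\partial n}}_e,
\end{gather*}
and by the triangle inequality it suffices to bound the two one-sided traces $\|\partial_n(u-u^\star)\|_{0,e\cap\partial T}$ from each adjoining element $T$. These one-sided traces will be controlled with the anisotropic multiplicative trace inequality of Lemma \ref{lem:amti} in its $L_2$ form, namely
\begin{gather*}
\|w\|_{0,\partial T_y}^2\le 2\,\|w\|_{0,T}\,|w|_{1,T} + \tfrac{2}{h_x(T)}\,\|w\|_{0,T}^2,
\end{gather*}
applied to $w=\partial_n(u-u^\star)$. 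The problem thereby reduces to $L_2$-norms of the first and second derivatives of $u-u^\star$ on the element, together with an $h^{-1}$-factor in the direction normal to $e$.

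For type-I edges, $u^\star|_{\Omega_0}=u^I$ is the standard biquadratic Lagrange interpolant on isotropic $H\times H$ elements, so the cleaner Lemma \ref{lem:tracelem} applies directly and yields $\|\partial_n(u-u^I)\|_{0,e}^2\le C\,H^3\,|u|_{3,T}^2$. Summing over $\mathcal{E}(I)$ and inserting the decomposition \eqref{eq:soldec}, the smooth part produces $C\,H^3|S|_3^2\le C\,N^{-3}$ thanks to the hypothesis $|S|_3\le C$, while every layer component $E$ has $|E|_{3,\Omega_0}\lesssim\eps^{-5/4}N^{-\lambda_0}$, which with $\sqrt\eps\le(\ln N)^{-2}$ and $\lambda_0\ge 3$ is dominated by $C\,N^{-3}$; this gives \eqref{eq:jumpomega0}.

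For type-III edges the adjoining element $T\subset\Omega_1\cup\Omega_3$ is anisotropic with long side $H$ normal to $e$ and short side $h$, and on it $u^\star|_T=\Pi^y u$ (analogously with $\Pi^x$ in $\Omega_2\cup\Omega_4$). Lemma \ref{lem:amti} reduces the trace bound to a product of $\|(u-\Pi^y u)_x\|_{0,T}$ and $\|(u-\Pi^y u)_{xx}\|_{0,T}$ plus $H^{-1}\|(u-\Pi^y u)_x\|_{0,T}^2$. The smooth part is handled by combining \eqref{eq:PianxxM} for $\boldsymbol{\gamma}=(1,0)$ with \eqref{eq:PiaxxM} and $|S|_3\le C$, yielding a $C\,N^{-3}$ contribution after summation over all type-III edges. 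The boundary layer $E_1$ is estimated by reusing the anisotropic bounds established in the proof of Lemma \ref{lem:l2estimates}: the pattern of \eqref{eq:H1EiOmegaix} and \eqref{eq:H2EiOmegaixx}, together with the third-order bound for $\|E_1-\Pi^y E_1\|_{0,\Omega_1}$ already isolated in the proof of Lemma \ref{lem:tracel2estim}, produces, after use of $\sqrt\eps\le(\ln N)^{-2}$, the claimed dominant factor $\eps^{-1/2}N^{-3}(\ln N)^4$. All remaining layer components on $\Omega_i$ with $i$ different from the layer's index are of order $N^{-\lambda_0}$ via the $L_\infty$ stability \eqref{eq:Piainfstab}, exactly as in \eqref{eq:EiOmegaj}, and are therefore negligible. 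Summation over $\mathcal{E}(III)$ and over the four subdomains $\Omega_1,\dots,\Omega_4$ yields \eqref{eq:jumpomega}.

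The main obstacle will be the bookkeeping required to balance the three products $H^2\|D^{(3,0)}E_1\|_{0,\Omega_1}$, $H h\|D^{(2,1)}E_1\|_{0,\Omega_1}$ and $h^2\|D^{(1,2)}E_1\|_{0,\Omega_1}$ against each other in the anisotropic interpolation bounds for $(E_1-\Pi^y E_1)_{xx}$, since these produce $\eps^{-5/4}$, $\eps^{-3/4}$ and $\eps^{-1/4}$ scalings; the hypothesis $\sqrt\eps\le(\ln N)^{-2}$ is precisely what is needed to absorb the worst of these factors into the $\eps^{-1/2}(\ln N)^4$ on the right of \eqref{eq:jumpomega}, and the hypothesis $|S|_3\le C$ is precisely what keeps the smooth part from contaminating both estimates with a spurious $\eps^{-1}$ factor.
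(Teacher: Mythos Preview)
Your overall strategy (one-sided traces, Lemma~\ref{lem:amti}, solution decomposition) matches the paper's, but there is a genuine gap in the treatment of the layer components on type-I edges. You apply Lemma~\ref{lem:tracelem} to all of $u$ and then split, arguing that for a layer component $E$ one has $|E|_{3,\Omega_0}\lesssim\eps^{-5/4}N^{-\lambda_0}$ and that this contribution is ``dominated by $CN^{-3}$''. After summing over edges you actually need $H^3|E|_{3,\Omega_0}^2 \le CN^{-3}$, i.e.\ $\eps^{-5/2}N^{-2\lambda_0}\le C$; since only \emph{upper} bounds on $\eps$ are available (namely $\sqrt\eps\le CN^{-1}$ and $\sqrt\eps\le(\ln N)^{-2}$), this is not uniform in $\eps$ and the argument breaks down when $\eps$ is very small. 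The paper circumvents this by writing $\bigjump{\partial_n(E-E^I)}_e=\bigjump{\partial_n E^I}_e$ (as $E\in C^1$) and applying \emph{inverse} estimates to the discrete function: $\|\partial_n E^I\|_{0,e}\le CH^{-3/2}\|E^I\|_{0,T}$, followed by the $L_\infty$-stability $\|E^I\|_{0,\Omega_0}\le C\|E\|_{L_\infty(\Omega_0)}\le CN^{-\lambda_0}$, which carries no $\eps$-dependence at all. The same inverse-estimate trick is needed for the ``other'' layer components on type-III edges; your appeal to \eqref{eq:EiOmegaj} gives only an $L_2$ bound on the function itself, not on the trace of its normal derivative, and feeding the derivative bounds \eqref{eq:H1EiOmegaj}, \eqref{eq:H2EiOmegaj} into the trace inequality would again produce an uncontrolled negative power of $\eps$.

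Two further cases are missing from your outline. First, on the strip of macro-elements in $\Omega_i$ adjacent to $\partial\Omega_0$ the interpolant $u^\star$ is not $\Pi^y u$ but $\Pi^y u$ plus the correction term involving $\partial_y(u^I-u)|_{\Omega_0}$; the jump of its $x$-derivative across type-III edges must be bounded separately (the paper does this via Lemma~\ref{lem:amti} and \eqref{eq:modestim}). Second, some type-III edges lie on $\partial\Omega_f\setminus\partial\Omega$, where one adjacent element is in $\Omega_f$ with $u^\star=\tilde\Pi u$ and the other is anisotropic; the $\tilde\Pi$-side has to be handled with \eqref{eq:tildePiuniform}, as in \eqref{eq:NormalJumpOmegaij}.
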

\begin{proof}
	Recall that by construction the normal derivative of $u^\star$ is continuous across type-II and type-IV edges, i.e.~across long edges of anisotropic elements and within the subdomains close to the four corners of $\Omega$.	Let $e \subset \overline{\Omega_0}$ be a type-I edge.
	Since $u^\star$ is defined by nodal interpolation on the the two elements $T_1$ and $T_2$ that share the edge $e$ we find with Lemma \ref{lem:tracelem} that
	\begin{gather*}
		\bigg\| \bigjump{\frac{\partial(S-S^I)}{\partial n}} \bigg\|_{0,e} = \bigg\| \left.\frac{\partial(S-S^I)}{\partial n}\right|_{T_1} \bigg\|_{0,e} + \bigg\| \left.\frac{\partial(S-S^I)}{\partial n}\right|_{T_2} \bigg\|_{0,e} \le C H^{3/2} \big( |S|_{3,T_1} + |S|_{3,T_2} \big).
	\end{gather*}
Hence, 
	\begin{gather}\label{eq:smoothNormalJumpOmega0}
		\sum_{e \in \mathcal{E}^N_{int}(\Omega_0)} \bigg\| \bigjump{\frac{\partial(S-S^I)}{\partial n}} \bigg\|_{0,e}^2 \le C H^3 |S|_{3,\Omega_0}^2 \le C N^{-3}.
	\end{gather}
	Next we abbreviate $E = u-S$. In $\Omega_0$ the layer components $E$ are pointwise small and smooth, hence on a type-I  edge $e \in \mathcal{E}(I)$ we use inverse estimates to obtain
	\begin{align*}
		\bigg\| \bigjump{\frac{\partial(E-E^I)}{\partial n}} \bigg\|_{0,e} &= \bigg\| \bigjump{\frac{\partial(E^I)}{\partial n}} \bigg\|_{0,e} = \bigg\| \left.\frac{\partial(E^I)}{\partial n}\right|_{T_1} \bigg\|_{0,e} + \bigg\| \left.\frac{\partial(E^I)}{\partial n}\right|_{T_2}\bigg\|_{0,e}\\
		&\hspace*{-1cm}\le C H^{-1/2} \bigg( \bigg\| \frac{\partial(E^I)}{\partial n} \bigg\|_{0,T_1} + \bigg\| \frac{\partial(E^I)}{\partial n} \bigg\|_{0,T_2} \bigg) \le C H^{-3/2} \big(\|E^I\|_{0,T_1} + \|E^I\|_{0,T_2}\big).
	\end{align*}
	A summation over all type-I edges then yields
	\begin{gather}\label{eq:layerNormalJumpOmega0}
		\sum_{e \in \mathcal{E}(I)} \bigg\| \bigjump{\frac{\partial(E-E^I)}{\partial n}} \bigg\|_{0,e}^2 \le C H^{-3} \|E^I\|_{0,\Omega_0}^2 \le C H^{-3} \|E\|_{\infty,\Omega_0}^2 \le C H^{-3} N^{-2\lambda_0} \le C N^{-3}.
	\end{gather}
	Combining \eqref{eq:smoothNormalJumpOmega0} and \eqref{eq:layerNormalJumpOmega0} we arrive at \eqref{eq:jumpomega0}.
	
	It remains to estimate the jump of the normal derivative across short edges of anisotropic elements which are of type III. Let $e = T_1 \cap T_2 \subset \overline{\Omega_1}$ denote such an edge. We shall first deal with the case that $T_1$ and $T_2$ are anisotropic elements. Again, we split $u$ into smooth and layer components and estimate
	\begin{gather*}
		\big\|\jump{\boldsymbol{D}^{(1,0)}(S - \Pi^y S)} \big\|_{0,e} \le \big\| \boldsymbol{D}^{(1,0)}(S - \Pi^y S)|_{T_1} \big\|_{0,e} + \big\| \boldsymbol{D}^{(1,0)}(S - \Pi^y S)|_{T_2} \big\|_{0,e}.
	\end{gather*}
	Lemma \ref{lem:amti} gives for the smooth part
	\begin{align*}
		\big\| \boldsymbol{D}^{(1,0)}(S - \Pi^y S)|_{T} \big\|_{0,e}^2 &\le C \bigg( \big\| \boldsymbol{D}^{(1,0)}(S - \Pi^y S) \big\|_{0,T} \big\| \boldsymbol{D}^{(2,0)}(S - \Pi^y S) \big\|_{0,T}\\
		&\quad + \frac{1}{H} \big\| \boldsymbol{D}^{(1,0)}(S - \Pi^y S) \big\|_{0,T}^2 \bigg)\\
		& \le C ( H^2 H + H^{-1} H^4 ) |S|_{3,T}^2 \le C N^{-3} |S|_{3,T}^2.
	\end{align*}
	A summation of all type-III edges then yields
	\begin{gather}\label{eq:smoothNormalJumpOmegai}
		\sum_{e\in \mathcal{E}(III)} \Big\| \bigjump{\frac{\partial}{\partial n}(S - S^\star)} \Big\|_{0,e}^2 \le C N^{-3} |S|_{3,\bigcup_{i=1}^4 \Omega_i}^2 \le C N^{-3}.
	\end{gather}	
	With the layer component $E_1$ we proceed in a similar manner
	\begin{align*}
		\big\| \boldsymbol{D}^{(1,0)}(E_1 - \Pi^y E_1)|_{T} \big\|_{0,e}^2 &\le C \bigg( \big\| \boldsymbol{D}^{(1,0)}(E_1 - \Pi^y E_1) \big\|_{0,T} \big\| \boldsymbol{D}^{(2,0)}(E_1 - \Pi^y E_1) \big\|_{0,T}\\
		&\quad + \frac{1}{H} \big\| \boldsymbol{D}^{(1,0)}(E_1 - \Pi^y E_1) \big\|_{0,T}^2 \bigg).
	\end{align*}
	A summation gives with \eqref{eq:H1EiOmegaix} and \eqref{eq:H2EiOmegaixx}
	\begin{gather}\label{eq:E1NormalJumpOmegai}
		\sum_{e\in \mathcal{E}(III)} \Big\| \bigjump{\frac{\partial}{\partial n}(E_1 - E_1^\star)} \Big\|_{0,e}^2 \le C \eps^{-1/2} N^{-3} (\ln N)^4.
	\end{gather}	
	Any other layer component $E \neq E_1$ can handled similarly as in the interior subdomain $\Omega_0$:
	\begin{gather*}
		\big\| \jump{\boldsymbol{D}^{(1,0)}(E- \Pi^y E)} \big\|_{0,e} \le C H^{-3/2} \big(\| \Pi^y E\|_{0,T_1} + \|\Pi^y E\|_{0,T_2}\big).
	\end{gather*}
	Hence,
	\begin{gather}\label{eq:layerNormalJumpOmegai}
		\sum_{e\in \mathcal{E}(III)} \Big\| \bigjump{\frac{\partial}{\partial n}(E- E^\star)} \Big\|_{0,e}^2 \le C H^{-3} \big( \| \Pi^y E\|_{0,\Omega_1\cup\Omega_3}^2 + \| \Pi^x E\|_{0,\Omega_2\cup\Omega_4}^2 \big) \le C \eps^{1/2} N^{-3} \ln N,
	\end{gather}
	as shown in \eqref{eq:EiOmegaj}.
	In order to estimate the jump of the normal derivative of $u - u^\star$ across short interior edges of for instance $\Omega_1$ it remains to estimate the jump of the $x$-derivative of the term
	\begin{gather*}
		\sum_{\substack{i=N/2 \\ {j \in \{N/4, 3N/4\}}}}^{3 N/2}\!\!\!\!\! \left.\frac{\partial(u^I-u)}{\partial y} \right|_{\Omega_0}\!\!\!\!(x_{i/2},y_j)\, \ell_{i/2}(x) \psi_j(y)	
	\end{gather*}
	across these edges. With Lemma \ref{lem:amti} and \eqref{eq:modestim} one easily sees that this term is better behaved than $\jump{\boldsymbol{D}^{(1,0)}(u- \Pi^y u)}$.
	
	Finally, we consider type-III edges that are shared by an anisotropic element and a small square shaped one in the subdomains close to the corners of $\Omega$. The common edge is then a subset of $\partial \Omega_f \setminus \partial \Omega$. Let for instance $T_1 \in \overline{\Omega_1}$ and $T_2 \in \overline{\Omega_{12}}$ denote such elements. Then the normal derivative of $u^\star$ jumps across the common edge at $x=\lambda$.
	Since
	\begin{gather*}
		\bigg\| \bigjump{\frac{\partial(u - u^\star)}{\partial n}} \bigg\|_{0,e} = \bigg\| \left.\frac{\partial(u- u^\star)}{\partial n}\right|_{T_1} \bigg\|_{0,e} + \bigg\| \left.\frac{\partial(u-\tilde \Pi u)}{\partial n}\right|_{T_2} \bigg\|_{0,e},
	\end{gather*}
	we can estimate the first summand like before and it remains to estimate the second one. We start off with a trace inequality
	\begin{gather*}
			\big\| \boldsymbol{D}^{(1,0)}(u-\tilde \Pi u)|_{T_2} \|_{0,e}^2 \le C \big( \frac{1}{h} | u - \tilde \Pi u |^2_{1,T_2} + h | u - \tilde \Pi u |^2_{2,T} \big).
	\end{gather*}
	Hence, with \eqref{eq:tildePiuniform}:
	\begin{gather}\label{eq:NormalJumpOmegaij}
		\begin{aligned}
			\big\| \boldsymbol{D}^{(1,0)}(u-\tilde \Pi u)|_{\Omega_{12}} \|_{0,x=\lambda}^2 &\le C \big( \frac{1}{h} | u - \tilde \Pi u |^2_{1,\Omega_{12}} + h | u - \tilde \Pi u |^2_{2,\Omega_{12}} \big)\\
			&\le C h^3 |u|_{3,\Omega_{12}}^2 \le C h^3 \meas(\Omega_{12}) |u|_{W_{3,\infty}(\Omega_{12})}^2\\
			&\le C \eps^{3/2} N^{-3} (\ln N)^3 \eps \ln N \eps^{-3} = C \eps^{-1/2} N^{-3} (\ln N)^4.
		\end{aligned}
	\end{gather}
	Collecting \eqref{eq:jumpomega0}, \eqref{eq:smoothNormalJumpOmegai}, \eqref{eq:E1NormalJumpOmegai}, \eqref{eq:layerNormalJumpOmegai} and \eqref{eq:NormalJumpOmegaij} we arrive at \eqref{eq:jumpomega} and finish the proof.
\end{proof}
	
	\begin{rem}\label{rem:S3}
		Under additional compatibility conditions on the right hand side $f$ it should be possible to remove the dependency of the third-order derivatives of the smooth part $S$ on $\eps$ in \eqref{eq:soldecestimS}, giving $\|S\|_3 \le C$. However, assuming $|S|_3 \le C$ is of course weaker than requiring that all third-order derivatives of $u$ are pointwise bounded uniformly with respect to $\eps$.
	\end{rem}
	
	\begin{rem}
		Let $e$ denote a horizontal long edge of an anisotropic macro-element. The interpolation operator $\Pi^y$ features a stability of the form
		\begin{gather*}
			\|(\Pi^y v)_y\|_{\infty,e} \le C \|v_y\|_{\infty,e}.
		\end{gather*}
		However, this seems to lead only to the estimate $\|(\Pi^y E_1)_y\|_{0,e}^2 = \Landau(\eps^{-1})$ which is not good enough for our purposes. That is why we use a modification of $\tilde \Pi$ in the definition of $u^\star$ in order to match the normal derivatives on both sides of $\partial \Omega_0$.
	\end{rem}

\FloatBarrier

\end{document}